\newtheorem{theorem}{Theorem}[section]
\newtheorem{lemma}[theorem]{Lemma}
\newtheorem{proposition}[theorem]{Proposition}
\newtheorem{corollary}[theorem]{Corollary}
\newtheorem{definition}[theorem]{Definition}
\theoremstyle{definition}
\newtheorem{example}[theorem]{Example}
\newtheorem*{conjecture}{Conjecture}
\theoremstyle{remark}
\newtheorem{remark}{Remark}[section]
\def\XXint#1#2#3{{\setbox0=\hbox{$#1{#2#3}{\int}$ }
\vcenter{\hbox{$#2#3$ }}\kern-.6\wd0}}
\newcommand{\End}{\mathrm{End}}
\newcommand{\Hom}{\mathrm{Hom}}
\newcommand{\HC}{\mathrm{HC}}
\newcommand{\Aut}{\mathrm{Aut}}
\newcommand{\Res}{\mathrm{Res}}
\newcommand{\Sym}{\mathrm{Sym}}
\newcommand{\Ind}{\mathrm{Ind}}
\newcommand{\sgn}{\mathrm{sgn}}
\newcommand{\IH}{\mathrm{IH}}
\newcommand{\Q}{\mathbb{Q}}
\newcommand{\R}{\mathbb{R}}
\newcommand{\C}{\mathbb{C}}
\newcommand{\V}{\mathbb{V}}
\newcommand{\Z}{\mathbb{Z}}
\newcommand{\A}{\mathbb{A}}
\newcommand{\fg}{\mathfrak{g}}
\def\QQ{\mathbb{Q}}
\newtheorem*{hzconj}{\textup{\bf Harris-Zucker Conjecture}}
\newcommand{\Rmnum}[1]{\expandafter\@slowromancap\romannumeral #1@}
\begin{document}
\allowdisplaybreaks
\title{Hodge Classes in the Cohomology of Local Systems}

\pagestyle{fancy}
\fancyhf{}
\renewcommand{\headrulewidth}{0pt}
\fancyhead[CE]{}
\fancyhead[CO]{\leftmark}
\fancyhead[LE,RO]{\thepage}

\author{Xiaojiang Cheng}
\email{xiaojiangcheng@wustl.edu}
\address{Department of Mathematics, Washington University in St. Louis, 1 Brookings Drive, Campus Box 1146, St. Louis, MO 63130-4899}
\begin{abstract}
We study the Hodge conjecture for certain families of varieties over arithmetic quotients of balls and Siegel domain of degree two. As a byproduct, we derive formulas for Hodge numbers in terms of automorphic forms.
\end{abstract}

\subjclass[2020]{14C30, 11F75,14K10}
\keywords{algebraic cycles, automorphic forms, relative Lie algebra cohomology groups}
\maketitle
\numberwithin{equation}{section}

\section{Introduction}

The Hodge conjecture is one of the most important open problems in mathematics.  For a complex projective $n$-manifold, it says that \emph{Hodge $(p,p)$-classes} defined by topological and analytic conditions should be representable as fundamental classes of algebraic cycles of codimension $p$.  The only case known in general is the Lefschetz theorem on $(1,1)$-classes (and its dual, for $p=n-1$).  Even for abelian varieties, the Hodge conjecture is still a wide-open question, notwithstanding important deep results of Deligne \cite{DM} and others \cite{Ma, Mu, Sc}.

However, there are interesting \emph{strategies} for attacking the Hodge conjecture (HC) in general.  Inspired by Lefschetz's original approach, one idea features normal functions arising from fibering a Hodge class out over a base \cite{BFNP, GG, KP}. Philosophically, it is also natural to break the HC into a question about the absoluteness of Hodge classes (or field of definition of Hodge loci) and the HC for varieties defined over $\bar{\QQ}$ \cite{Vo}.

Universal families over locally symmetric spaces (connected Shimura varieties) provide a source of varieties defined over $\bar{\QQ}$ which come endowed with natural fibrations.
In this paper, we investigate what can be said about Hodge classes on such automorphic total spaces. The central point is that, under the Decomposition Theorem \cite{Sa, CM, KL}, these Hodge classes live in the intersection cohomology of automorphic local systems on the Shimura variety, which can be calculated using representation-theoretic tools.
The computation in \cite{Ar1} for the universal genus 2 curve over a Siegel modular threefold is one good example, and we want to extend Arapura's result (that the HC holds in this case) to some other interesting families.

So let $D=G/K$ be a Hermitian symmetric domain and $X=\Gamma\backslash D$ be an arithmetic quotient. We are interested in the cohomology group $H^*(X,\mathbb{V})$ where $\mathbb{V}$ is a local system underlying a homogeneous variation of Hodge structure on $X$. The groups $H^*(X,\mathbb{V})$ can be studied in terms of relative Lie algebra cohomology groups. Then the essential point is to study the decomposition of $L^2$ automorphic forms $\mathcal{A}^2(G,\Gamma)$, and use the Vogan-Zuckerman theory to determine the nonzero spaces $H^*(\mathfrak{g}, K; V\otimes U)$ and their Hodge numbers.  Here $U$ and $V$ are representations of $\mathfrak{g}=Lie(G)$, with $U\subset \mathcal{A}^2(G,\Gamma)$ unitary (infinite-dimensional or trivial) and $V$ a finite-dimensional Hodge representation.

The decomposition of $\mathcal{A}^2(G,\Gamma)$ is in general a very difficult problem and is only accessible for very simple groups $G$ and $\Gamma$.  However, when $G=U(3,1)$ we obtain the following ``unitary'' analogue of Arapura's result:\vspace{2mm}

\noindent \textbf{Theorem A} (= Thm.~\ref{thA}). Let $X$ be a compactification of the universal genus four Picard curve over an arithmetic quotient of $\mathbb{B}^3$.  Then the HC holds for $X$.\vspace{2mm}

\noindent We get a similar result for families of Picard-rank 16 $K3$ surfaces with cubic automorphism over an arithmetic quotient of the 2-ball (Theorem \ref{tK3}).

For $G=GSp(4)$, we use Arthur's classification of the automorphic representations and more recent work of Schmidt on Saito-Kurokawa lifts to establish the following\vspace{2mm}

\noindent \textbf{Propositon B} (= Prop.~\ref{thB}).  Let $\Gamma=\Gamma^{\mathrm{para}}(p)$ be a paramodular subgroup of prime level; then the multiplicity of the non-tempered representation $\sigma_k$ in $\mathcal{A}^2(Sp(4,\R),\Gamma)$ is given in terms of classical spaces of cusp forms:
   \begin{itemize}[leftmargin=0.4cm]
       \item $\dim S_{2k-2}(SL(2,\Z))+\dim S_{2k-2}(\Gamma_0(p))^{new,+}$ if $k$ is odd;
       \item $\dim S_{2k-2}(\Gamma_0(p))^{new,-}$ if $k$ is even.
\end{itemize}
\vspace{2mm}

\noindent This leads to our second result on the Hodge Conjecture:\vspace{2mm}

\noindent \textbf{Theorem C} (= Thm.~\ref{thC} and its Corollary).  The HC holds for the self-fiber product of the universal genus 2 curve, as well as for the universal abelian surface (and any compactification thereof), over $\Gamma^{\mathrm{para}}(p)\backslash \mathfrak{H}_2$ when $p=1,2,3,5$.
\vspace{2mm}

\noindent The meaning of the HC for a general quasi-projective variety is recalled in \S\ref{S2.4}.
{\begin{center} $*\;\;\;\;\;*\;\;\;\;\;*$\end{center}}

The paper is organized as follows. Section 2 is a summary of cohomology theories in proving the theorems. Section 3 consists of explicit computations of cohomological representations and their cohomology. Multiplicity formulas are given in section 4. And finally, we combine all results in section 5 to get applications to Hodge classes and the Hodge Conjecture.

\section*{Acknowledgements} The author would be pleased to express his heartfelt gratitude to Prof. Matt Kerr for introducing the interesting problem and for his innumerable discussions. He would also thank Dan Petersen, Hang Xue, Jayce Getz, and Ralf Schmidt for their kind help in automorphic representation theory. 


\section{Cohomology theories}

\subsection{Hodge representations}

The homogeneous variations of Hodge structure and underlying local systems which we will study over locally symmetric varieties arise from representations $V$ of a reductive group $G$ with additional data.

\subsubsection{Mumford-Tate groups and domains}

Fix an integer $n$. Let $(V,Q)$ be a pair consisting of a finite-dimensional $\Q$-vector space $V$ and a non-degenerate bilinear form $Q$ with $Q(u,v)=(-1)^nQ(v,u)$. A (polarized) Hodge structure of weight $n$ on $(V, Q)$ is  a decomposition $V_{\C}=\bigoplus\limits_{p+q=n}V^{p,q}$ satisfying the following conditions: 
\begin{itemize}[leftmargin=0.4cm]
    \item $\overline{V^{p,q}}=V^{q,p}$;
    \item $Q(u,v)=0$ for $u\in V^{p,q},v\in H^{p',q'},p\neq p'$;
    \item $i^{p-q}Q(u,\bar{u})>0$ for $0\neq u\in V^{p,q}$.

\end{itemize}

The Hodge numbers $h^{p,q}$ are just the dimensions of $V^{p,q}$. Equivalently, a Hodge structure can be considered as a real representation $\varphi: S^1\to \Aut(V_{\C}, Q)$ by setting $z\cdot v=z^{p-q}v$ for $z\in S^1,v\in V^{p,q}$ and extending by complex linearity.

Let $\mathcal{D}=\mathcal{D}_{\textbf{h}}=\mathcal{G}_{\R}/\mathcal{G}_{\R}^0$ be the period domain parameterizing $Q$-polarized Hodge structures on $V$ with Hodge numbers $\textbf{h}=(h^{n,0},\cdots,h^{0,n})$. Here $\mathcal{G}_{\mathbb{R}}=\Aut(V_{\R},Q)$ is either an orthogonal group $O(a,2b)$ (if $n$ is even) or a symplectic group $Sp(2r,\R)$ (if $n$ is odd), and $\mathcal{G}_{\R}^0$ is the compact stabilizer of a fixed $\varphi\in\mathcal{D}$. To each Hodge structure $\varphi\in\mathcal{D}$ is associated a $\Q$-algebraic Hodge group $\textbf{G}_{\varphi}\subset\Aut (V,Q)$, and a Mumford-Tate domain $D=D_{\varphi}=G_{\varphi}\cdot \varphi\subset \mathcal{D}$, where $G_{\varphi}=\textbf{G}_{\varphi}(\mathbb{R})$. The Hodge group $\textbf{G}_{\varphi}$ is defined to be the $\mathbb{Q}$-algebraic closure of $\varphi(S^1)$. It may also be defined as the stabilizer of the Hodge tensors of $\varphi$.

\begin{example}

    We consider a pair $(V, Q)$ where $V$ is a $\mathbb{Q}$-vector space of dimension six and $Q$ a nondegenerate alternating form. We assume there is an embedding $\mathbb{F}(:=\mathbb{Q}(\sqrt{-3})=\mathbb{Q}(\omega))\hookrightarrow \End_{\mathbb{Q}}(V)$. Then we have a decomposition $V_{\mathbb{F}}=V_+\oplus V_-$ into the  $\pm$ eigenspaces of the action of $\mathbb{F}$. We assume further that $V_{\pm}$ are isotropic with respect to $Q$ and the  Hermitian form on $V_{+,\mathbb{C}}$ $H(u,v)=\pm iQ(u,\overline{v})$ has signature $(2,1)$.

    Let $\varphi$ be a weight one Hodge structure over $(V, Q)$ commuting with the $\mathbb{F}$-action. So the Hodge decomposition $V_{\mathbb{C}}=V^{p,q}$ is compactible with the decomposition $V_{\mathbb{F}}=V_+\oplus V_-$. We assume that $\dim V^{1,0}_+=V^{0,1}_-=1$, $\dim V^{1,0}_-=V^{0,1}_+=2$. The Hodge structures of this type are precisely the Hodge structure associated with Picard curves.

     Let $G=\mathcal{U}(2,1):=\Aut_{\mathbb{Q}}(V,Q)\cap \mathrm{Res}_{\mathbb{F}/\mathbb{Q}}GL_{\mathbb{F}}(V)$. This is a $\mathbb{Q}$-algebraic group, and is a $\mathbb{Q}$-form of the real Lie group $U(2,1)$ of automorphisms of $\mathbb{C}^3$ preserving the Hermitian form $H$. By the construction, the Hodge group of such Hodge structures are subgroups of $U(2,1)$. And the equality holds for generic Hodge structures. The Mumford-Tate domain of a generic Hodge structure is the two-dimensional ball $\mathbb{B}$, the Hermitian symmetric domain associated with the group $U(2,1)$.
    
\end{example}

\subsubsection{Hodge representations}

Hodge representations were introduced by Green-Griffiths-Kerr \cite{GGK1} to classify the Hodge groups of polarized Hodge structures and the corresponding Mumford-Tate subdomains of a period domain.

\begin{definition}

    Let $G$ be a reductive $\Q$-algebraic group. A Hodge representation $(G,\rho,\varphi)$ is given by a $\Q$-representation $\rho:M\to \Aut(V,Q)$ and a non-constant homomorphism of Lie groups
    $\varphi:\mathbb{S}^1\to G(\mathbb{R})$
    such that $(V,Q,\rho\circ\varphi)$ is a polarized Hodge structure.
\end{definition}

Green-Griffiths-Kerr showed that the Hodge groups $G=G_{\varphi}$ and Mumford-Tate domains $D=D_{\varphi}\subset \mathcal{D}_{\textbf{h}}$ are in bijection with Hodge representations with Hodge numbers $\textbf{h}_{\varphi}\leq \mathbf{h}$. The induced (real Lie algebra) Hodge representations
$$\R\to \mathfrak{g}_{\R}\to \End(V_{\R},Q)$$
are enumerated by tuples $(\mathfrak{g}^{ss}_{\C}, E^{ss},\mu,c)$ consisting of:
\begin{itemize}[leftmargin=0.4cm]
    \item a semisimple complex Lie algebra $\mathfrak{g}_{\C}^{ss}=[\mathfrak{g}_{\C},\mathfrak{g}_{\C}]$,
    \item an element $E^{ss}\in \mathfrak{g}_{\C}^{ss}$ with the property that $\textrm{ad}\ E^{ss}$ acts on $\mathfrak{g}_{\C}^{ss}$ diagonalizably with integer eigenvalues,
    \item a highest weight $\mu$ of $\mathfrak{g}_{\C}^{ss}$, and
    \item a constant $c\in \Q$ satisfying $\mu(E^{ss})+c\in \frac{1}{2}\Z$.
\end{itemize}

Let's briefly recall the classification theorem. Given the Hodge decomposition $V_{\mathbb{C}}=\oplus V_{\varphi}^{p,q}$, the associated grading element (or infinitesimal Hodge structure) $E_{\varphi}\in i\mathfrak{g}_{\mathbb{R}}$ is defined by $E_{\varphi}(v)=\frac{1}{2}(p-q)v$ for $v\in V^{p,q}_{\varphi}$. In general, given a complex reductive Lie algebra $\mathfrak{g}_{\mathbb{C}}$, a grading element is any element $E\in\mathfrak{g}_{\mathbb{C}}$ with the property that $\mathrm{ad}(E)\in \End(\mathfrak{g}_{\C})$ acts diagonalizably on $\mathfrak{g}_{\C}$ with integer eigenvalues. That is,
$$\mathfrak{g}_{\C}=\bigoplus\limits_{\ell\in\Z}\mathfrak{g}^{\ell,-\ell}, \;\;\;\;\;\text{with}\;\;\   \mathfrak{g}^{\ell,-\ell}=\{\xi\in\mathfrak{\mathfrak{g}_{\C}}|[E,\xi]=\ell\xi\}.$$
Given the data $(\mathfrak{g}_{\C}, E)$, there is a unique real form $\mathfrak{g}_{\R}$ of $\mathfrak{g}_{\C}$ such that the above decomposition is a weight zero Hodge structure on $\mathfrak{g}_{\R}$ that is polarized by $-\kappa$. The properties of $E$ mean exactly that $(G,\mathrm{Ad}, E)$ is a Hodge representation. The data $(\mathfrak{g}_{\C},E)$ determines the homogeneous space $D_E$ and its complex dual $\check{D}_E$. More precisely, let $\mathfrak{g}=\mathfrak{z}\oplus \mathfrak{g}^{ss}$ denote the decomposition of $\mathfrak{g}$ into its center $\mathfrak{z}$ and semisimple factor $\mathfrak{g}^{ss}$. Let $E=E'+E^{ss}$ be the corresponding decomposition. The Hodge domain $D_E$ is determined by $\mathfrak{g}^{ss}$ and $E^{ss}$.


The grading element $E$ acts on any representation $G(\mathbb{R})\to \Aut(V_{\R})$ by rational eigenvalues. The $E$-eigenspace decomposition $V_{\mathbb{C}}=\oplus_{k\in\mathbb{Q}}V_k$ is a Hodge decomposition if and only if those eigenvalues lie in $\frac{1}{2}\mathbb{Z}$. More precisely, if they lie in $\Z$ (resp.~$\Z+\tfrac{1}{2}$), we can then choose any even (resp.~odd) weight and ``put'' the Hodge structure in that weight.  (The point is that $E$ only knows $p-q$, not $p+q$.)  Of course, Hodge classes have $p-q=0$ regardless of weight.

To classify the real Hodge representations, we will first need to study finite-dimensional real representations. Let $V_{\R}$ be an irreducible real representation of $G$. By Schur's lemma, $\End_G(V_{\R})$ is a division algebra over $\R$. We say that $V_{\R}$ is of real (resp. complex, quaternionic) type if $\End_G(V_{\R})$ is isomorphic to $\R$ (resp. $\C$. $\mathbb{H}$). If $V_{\R}$ is of real type, $V_{\C}$ is an irreducible self-dual representation $V_+$. If $V_{\R}$ is of complex or quaternionic type, $V_{\C}$ is the direct sum of an irreducible representation $V_+$ and its dual $V_-$, and $V_{\R}=\Res\ V_+=\Res\ V_-$. Let $\mu,\mu^*$ be the highest weight of $V_+$ and $V_-$ respectively. Then we could read off the types of the representation $V_{\R}$ from the highest weight $\mu$. See \cite{GGK1} for more details.

Furthermore, the triple $(\mathfrak{g}_{\C},E,\mu)$ is equivalent to a tuple $(\mathfrak{g}_{\C}^{ss},E^{ss},\mu^{ss},c)$ by setting $c=\mu(E')\in \Q$ and $\mu^{ss}$ the restriction of $\mu$ to $\mathfrak{g}^{ss}$.

\begin{example}
We consider the group $SU(2,1)$. Let $\lambda_i$ $(i=1,2)$ be its two fundamental weights. Let $\lambda=a\lambda_1+b\lambda_2$ $(a,b\geq0)$ be a dominant weight and $V^{\lambda}$  be the irreducible representation (over $\C$) of highest weight $\lambda$. $V^{\lambda}$ is self-dual if and only $a=b$, and $V^{\lambda}$ is the complexification of an irreducible real representation $V_{\R}^{\lambda}$ of real type. Otherwise, the dual of $V^{\lambda}$ is $V^{\lambda'}$ where $\lambda'=b\lambda_1+a\lambda_2$. The direct sum $V^{\lambda}\oplus V^{\lambda'}$ is the complexification of an irreducible real representation $\tilde{V}_{\R}^{\lambda}\cong \mathrm{Res}_{\C/\R}V^{\lambda}$ of complex type. For example, the standard representation of $SU(2,1)$ on is of complex type, with complexification the direct sum of the complex standard representation and its dual.

The case for $G=SU(3,1)$ is similar. Let $\lambda_i$ $(i=1,2,3)$ be its three fundamental weights. Let $\lambda=\sum n_i\lambda_i$ be a dominant weight and $\lambda'=n_3\lambda_1+n_2\lambda_2+n_1\lambda_3$ its dual. Then $V^{\lambda}$ is of real type if and only if $n_1=n_3$. Otherwise $V^{\lambda}\oplus V^{\lambda'}$ has a natural real structure.
\end{example}

\begin{example}
If $G=Sp_4(\R)$, all $V^{\lambda}$ are of real type. The basechange map $V_{\R}\to V_{\R}\otimes\C$ establishes a bijection from the set of irreducible real representations to the set of irreducible complex representations.
    
\end{example}

\subsubsection{VHS over locally symmetric varieties}

Let $(G,\rho,\varphi)$ be a Hodge representation. Let $D=G/K$ be the associated Mumford-Tate domain and $\mathbb{V}$ the associated VHS over $D$. Let $\Gamma$ be an arithmetic subgroup of $G$. Then $\Gamma$ acts as morphisms of Hodge structures. Therefore, the local system on the quotient $X:=\Gamma\backslash D=\Gamma\backslash G/K$ is actually a VHS. 

\begin{remark}
 Let $G'$ be a semisimple Lie group with $\mathfrak{g}'=\mathfrak{g}^{ss}$. Then $D$, as a complex manifold, is the Hermitian symmetric domain associated with $G'$. The representation $V$ is naturally a representation of $G'$, and descends to a local system over $X=\Gamma'\backslash D$ with $\Gamma'=\Gamma\cap G'$. Therefore, we may assume the group $G$ is semisimple when computing the cohomology of the local systems.
   
\end{remark}

\subsubsection{Rational Hodge twists}

Let $\mathbb{F}$ be an imaginary quadratic field $\mathbb{F}$ and $(V,\varphi)$ a real Hodge structure with $\mathbb{F}$ acting as Hodge structure morphisms. Let $\iota: \mathbb{F}\to \mathbb{C}$ be a fixed embedding. Then we define $V_+$ (resp.~$V_+^{p,q}$) to be the $\iota$-eigenspace of $V_+$ (resp. $V^{p,q}$), and $V_-$ (resp. $V_-^{p,q}$) to be the $\bar{\iota}$-eigenspace of $V_-$ (resp. $V^{p,q}$). Then we get a decomposition of two conjugate pairs of complex Hodge structures $V_+$ and $V_-$ since $\overline{V_+^{p,q}}=V_-^{q,p}$.

Now let $V=V^{\lambda}$ be an irreducible complex representation of $G$. The grading element acts on $V$ with rational eigenvalues which differ by integers.  If these eigenvalues are not half-integral, we can twist the action to make them half-integral.  More precisely, as in \cite{KK} we enlarge the MT group $G$ to $U(1)\cdot G$, with complex Lie algebra $\C\oplus \mathfrak{g}$, and replace the grading element by $\tilde{E}=(1,E)$.  We then define the twist $V\{c\}$ for $c\in \Q$ by
$$(V\{c\})_k:=V_{k+c};$$
here $c$ is simply the eigenvalue through which the ``$1$'' acts on $V$.

In the event that $\tilde{V}_{\R}$ is of complex type, with complexification $V_+\oplus V_-$, we define $\tilde{V}\{c\}_{\R}$ to be the real irrep underlying $V_+\{c\}\oplus V_-\{-c\}$.

In \cite{GGK1}, the terminiology \emph{half-twist} is used for the application of $\{-\tfrac{1}{2}\}$ to a pre-existing Hodge structure of type $V_+\oplus V_-$. This changes the parity of the weight, and is usually thought of as adding 1 to the weight.


\begin{example}\label{ex2.5}
    
    We consider the real standard representation $\tilde{V}_{\R}^{\lambda_1}$ of $U(2,1)$. Its complexification is the direct sum of the standard representation $V_+:=V^{\lambda_1}$ and its dual $V_-:=V^{\lambda_2}$. For $V_+$, $E$ acts as $2/3$ on a one-dimensional subspace, and $-1/3$ on a two-dimensional subspace. For $V_-$, $E$ acts as $1/3$ on a two-dimensional subspace, and $-2/3$ on a one-dimensional subspace.

   Taking $c=1/6$ makes the Hodge indexes half-integral as desired ($E$ acts as $\pm 1/2$). We get a level-$1$ Hodge structure on $V^{\lambda_1}\{\tfrac{1}{6}\}_{\R}$, which we can take to have weight 1. In particular, $\dim V_+^{1,0}=\dim V_-^{0,1}=1,\dim V_+^{0,1}=\dim V_-^{1,0}=2$. This is exactly the Hodge structure of a Picard curve.

    Taking a half-twist of the above Hodge structure, we get a Hodge structure of weight $2$ with Hodge numbers $(1,4,1)$. It represents the transcendental cohomology of a family of $K3$ surfaces with Picard rank $\rho=16$.
    
    Taking half-twist again, we get a Hodge structure of weight three with Hodge numbers $(1,2,2,1)$. It represents the Hodge structure of Rohde's family of Calabi-Yau threefolds.

\end{example}
\subsection{The decomposition theorem}

The aim of this paper is to prove the Hodge conjecture for certain families of varieties. Before that, we first briefly recall the perverse Leray decomposition, see \cite{CM} for details.

Let $X$ be a complex projective manifold and let $\mathcal{D}_X$ be the derived category of bounded constructible sheaves and $\mathcal{P}_X$ be the full subcategory of \textit{perverse sheaves}. To any pair $(U, L)$ where $j: U \to X$ is a Zariski open subset of $X$  and $L$ a local system over $U$, the \textit{intersection complex $IC_U(L)$} is defined as the intermediate extension $j_{!*}(L)$ and is the unique perverse extension of $L$ to $X$ with neither subobjects nor quotients supported on $X\backslash U$. Intersection complexes are simple objects in the category of perverse sheaves, and perverse sheaves are iterated extensions of intersection complexes.

The decomposition theorem studies the topological properties of proper maps between algebraic varieties.

\begin{theorem}[Decomposition theorem]
Let $f: Y\to X$ be a proper map of complex algebraic varieties. There exists an isomorphism in the constructible bounded derived category $\mathcal{D}_X$:
$$Rf_*IC_Y\cong\bigoplus\limits_{i} \ ^{\mathfrak{p}}\mathcal{H}^i(Rf_*IC_Y)[-i].$$

Furthermore, the perverse sheaves $^{\mathfrak{p}}\mathcal{H}^i(Rf_*IC_Y)$ are semisimple; i.e., there is a decomposition into finitely many disjoint locally closed and nonsingular subvarieties $X=\coprod X_{\alpha}$ and a canonical decomposition into a direct sum of intersection complexes of semisimple local systems
$$^{\mathfrak{p}}\mathcal{H}^i(Rf_*IC_Y)\cong \bigoplus\limits_{\alpha} IC_{\overline{X_\alpha}}(L_{\alpha}).$$
\end{theorem}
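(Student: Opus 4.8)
The plan is to reduce the statement to the projective case and then to derive both the splitting and the semisimplicity from a single structural input: the existence of \emph{polarizations} on the intersection cohomology complexes, which forces the relevant Lefschetz isomorphisms and the self-duality responsible for semisimplicity. First I would reduce to $f$ projective. The assertion is local on $X$ and well-behaved under composition with proper modifications, so by Chow's lemma I may assume $f$ is projective, and by taking a resolution of $Y$ I may further assume $Y$ smooth, in which case $IC_Y = \underline{\Q}_Y[\dim Y]$ and $Rf_*IC_Y$ computes the shifted cohomology of the fibers. Fix a relatively ample class $\eta\in H^2(Y,\Q)$; cup product with $\eta$ defines a degree-$(+2)$ endomorphism of $Rf_*IC_Y$ in $\mathcal{D}_X$, hence induces maps $\eta^i:\, {}^{\mathfrak{p}}\mathcal{H}^{-i}(Rf_*IC_Y)\to {}^{\mathfrak{p}}\mathcal{H}^{i}(Rf_*IC_Y)$.

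The geometric heart of the first isomorphism is the \emph{Relative Hard Lefschetz theorem}: for each $i$, the map $\eta^i:\,{}^{\mathfrak{p}}\mathcal{H}^{-i}(Rf_*IC_Y)\xrightarrow{\sim}{}^{\mathfrak{p}}\mathcal{H}^{i}(Rf_*IC_Y)$ is an isomorphism. Granting this, the isomorphism $Rf_*IC_Y\cong\bigoplus_i {}^{\mathfrak{p}}\mathcal{H}^i(Rf_*IC_Y)[-i]$ is purely formal: it is Deligne's Lefschetz splitting criterion applied in the perverse $t$-structure, i.e.\ the same argument that degenerates the Leray spectral sequence for a smooth projective morphism, with the classical Hard Lefschetz operator replaced by the relative one acting on perverse cohomology. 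Thus the first displayed isomorphism reduces entirely to Relative Hard Lefschetz.

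For the semisimplicity statement I would equip the objects with Hodge-theoretic structure. Following Saito, $IC_Y$ underlies a polarizable pure Hodge module, and projective pushforward preserves polarizable pure Hodge modules and decomposes them by weight; the underlying perverse sheaves ${}^{\mathfrak{p}}\mathcal{H}^i(Rf_*IC_Y)$ are then pure and carry a polarization. The decisive point is that a \emph{polarized} object in the abelian category of perverse sheaves is semisimple: the polarization exhibits the object as self-dual, so any perverse subsheaf has its orthogonal complement as a splitting, and iterating expresses ${}^{\mathfrak{p}}\mathcal{H}^i$ as a finite direct sum of simple perverse sheaves. Since the simple objects are exactly the $IC_{\overline{X_\alpha}}(L_\alpha)$ for irreducible local systems $L_\alpha$ on strata, and the polarization restricts to polarize each $L_\alpha$ and so forces it to be semisimple, one obtains the stated decomposition; the stratification $X=\coprod X_\alpha$ is the common refinement of the supports that occur.

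The entire difficulty is concentrated in establishing purity/polarizability together with Relative Hard Lefschetz; everything else above is formal. In the Hodge-theoretic route this is the construction and stability theory of polarizable Hodge modules, whose technical core is the Hard Lefschetz and Hodge--Riemann package for $L^2$-cohomology. In the $\ell$-adic route (\cite{CM} sketches both) one instead reduces to finite fields and deduces purity from Frobenius weights via Gabber's theorem, semisimplicity then following because $\mathrm{Ext}^1$ between pure perverse sheaves of the same weight is controlled by weight considerations. Either way the crux is a weight argument --- Hodge-theoretic or arithmetic --- and I would expect to expend essentially all effort there, after which the splitting and the semisimple decomposition drop out as indicated.
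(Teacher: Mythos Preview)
The paper does not prove this theorem; it is quoted as background with references to \cite{Sa, CM, KL}, and the reader is directed to \cite{CM} for details. So there is no ``paper's own proof'' to compare against.

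That said, your sketch is a faithful outline of the two standard routes in the literature: reduction to the projective case via Chow's lemma, Deligne's Lefschetz splitting criterion fed by Relative Hard Lefschetz to obtain the decomposition, and then either Saito's theory of polarizable Hodge modules or the BBD weight argument over finite fields to obtain semisimplicity. This is exactly what the cited references do, and you correctly identify that all the genuine work lies in establishing purity/polarizability and Relative Hard Lefschetz. One small caution: the step ``a polarized perverse sheaf is semisimple because orthogonal complements split subobjects'' is morally right but hides nontrivial content---one needs the polarization to be nondegenerate on subobjects, which in Saito's framework is part of the inductive package (strictness of the weight filtration under pushforward), not an immediate formal consequence of self-duality alone.
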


The \textit{intersection cohomology groups} (with middle perversity) $IH^*(X, L)$ for the local system $L$ over a Zariski open subset $U$ are simply defined to be the hypercohomology groups $H^*(X, IC_U(L))$. Taking hypercohomology of the decomposition theorem, we get:

\begin{theorem}[Decomposition theorem for intersection cohomology groups]
Let $f: Y\to X$ be a proper map of varieties. There exists finitely many triples $(X_{\alpha},L_{\alpha},d_{\alpha})$ made of locally closed, smooth and irreducible algebraic subvarieties $X_{\alpha}\subset X$, semisimple local systems $L_{\alpha}$ on $X_{\alpha}$ and integer numbers $d_{\alpha}$, such that for every open set $U\subset X$ there is an isomorphism
$$IH^r(f^{-1}U)\cong \bigoplus\limits_{\alpha} IH^{r-d_{\alpha}}(U\cap\overline{X}_{\alpha},L_{\alpha}).$$
\end{theorem}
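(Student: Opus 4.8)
The plan is to derive this statement directly from the Decomposition theorem by applying hypercohomology, so that no new geometric input is required: the entire content is the additivity and shift-behavior of hypercohomology, together with the compatibility of everything in sight with restriction to a Zariski open subset. First I would record the two standard identifications that turn hypercohomology of $Rf_*IC_Y$ into the two sides of the asserted formula. On the one hand, for any map $f$ one has $H^*(X, Rf_*IC_Y)\cong H^*(Y, IC_Y)=IH^*(Y)$, and more generally, because restriction along an open immersion commutes with $Rf_*$ (applied to the Cartesian square $f^{-1}U\to U$ sitting inside $Y\to X$) and because $IC_Y$ restricts to $IC_{f^{-1}U}$ on the open $f^{-1}U$, we get $(Rf_*IC_Y)|_U\cong R(f|_{f^{-1}U})_*IC_{f^{-1}U}$ and hence $H^*\bigl(U,(Rf_*IC_Y)|_U\bigr)\cong IH^*(f^{-1}U)$. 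On the other hand, by the very definition of intersection cohomology and the fact that intermediate extension commutes with restriction to opens, $H^*\bigl(U, IC_{\overline{X_\alpha}}(L_\alpha)|_U\bigr)\cong IH^*(U\cap\overline{X_\alpha}, L_\alpha)$.

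Next I would assemble the right-hand side. Starting from the perverse decomposition $Rf_*IC_Y\cong\bigoplus_i {}^{\mathfrak{p}}\mathcal{H}^i(Rf_*IC_Y)[-i]$ of the first Decomposition theorem and inserting the semisimple splitting ${}^{\mathfrak{p}}\mathcal{H}^i(Rf_*IC_Y)\cong\bigoplus_\alpha IC_{\overline{X_\alpha}}(L_\alpha)$ of each perverse cohomology sheaf, I obtain one direct sum indexed by pairs $(i,\alpha)$, which I relabel by a single index $\alpha$. To each summand I attach a single integer $d_\alpha$ recording the shift $[-i]$ of the perverse degree together with the normalization convention placing each $IC$ complex in its cohomological degree; the strata $X_\alpha$, their closures $\overline{X_\alpha}$, and the semisimple local systems $L_\alpha$ are precisely the data furnished by the theorem.

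Then I would take hypercohomology on $U$ of the restricted decomposition. Since hypercohomology is additive over finite direct sums and sends a shift $[-d_\alpha]$ to a degree shift by $d_\alpha$,
\[
H^r\bigl(U,(Rf_*IC_Y)|_U\bigr)\cong\bigoplus_\alpha H^{r-d_\alpha}\bigl(U, IC_{\overline{X_\alpha}}(L_\alpha)|_U\bigr).
\]
Combining the two identifications from the first paragraph, the left side is $IH^r(f^{-1}U)$ and each summand on the right is $IH^{r-d_\alpha}(U\cap\overline{X_\alpha}, L_\alpha)$, which is exactly the claimed isomorphism.

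The main obstacle I expect is not any deep ingredient but the careful reconciliation of the several shift conventions — the perverse normalization of $IC$ by the dimension of the stratum, the $[-i]$ in the perverse decomposition, and the convention relating $H^*$ of an intersection complex to the indexing of $IH^*$ — so that the single integer $d_\alpha$ is defined correctly and the formula holds in the unshifted indexing of $IH^r$ stated in the theorem. A secondary point to verify is that the Decomposition theorem isomorphisms, which live in $\mathcal{D}_X$, genuinely restrict to an arbitrary Zariski open $U$; this holds because $j^*$ for an open immersion $j$ is exact and commutes with both $Rf_*$ and intermediate extension, so no finiteness or base-change subtlety beyond the properness of $f$ already used in the Decomposition theorem enters.
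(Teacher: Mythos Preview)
Your proposal is correct and follows exactly the route the paper indicates: the paper does not give a detailed proof but simply says ``Taking hypercohomology of the decomposition theorem, we get'' the statement, and your argument spells out precisely this step (additivity and shift-behavior of hypercohomology, plus compatibility with restriction to opens). Your attention to the shift conventions and to restriction along open immersions is more careful than the paper itself, but the approach is the same.
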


\begin{remark}
The decomposition is not uniquely defined. But in the case when $X$ is quasi-projective, one can make distinguished choices that realize the summands as mixed Hodge substructures of a canonical mixed Hodge structure on $IH^*(Y)$. In particular, if $Y$ is smooth and $X,Y,f$ are projective, then $IH^*(Y)=H^*(Y)$ is the usual cohomology, and the intersection cohomology groups in the sum are equipped with canonical pure polarizable Hodge structures. 
\end{remark}

\subsection{$L^2$-cohomology and Zucker's conjecture}
Let $G$ be a semisimple algebraic group defined over $\mathbb{Q}$ of Hermitian type with a fixed maximal compact subgroup $K$. The associated Hermitian symmetric domain $D$ is $G(\mathbb{R})/K$. Let $\Gamma$ be an arithmetic subgroup (or a congruence subgroup) of $G$ and let $X=\Gamma\backslash D=\Gamma\backslash G(\mathbb{R})/K$ be a locally symmetric variety. Then $X$ is a quasi-projective variety with Baily-Borel compactification $\overline{X}$. Denote by $i: X\to\overline{X}$ the natural inclusion map. Let $(V,\rho)$ be a (rational) representation of $G$, it defines a local system $\mathbb{V}$ over $X$. We are interested in the intersection cohomology $IH^*(\overline{X},\mathbb{V})$.

The Hermitian symmetric domain $D$ is equipped with a canonical Riemannian metric induced from the Killing form of the Lie algebra $\mathfrak{g}$. This metric is $\Gamma$-invariant, thus descends to a Riemannian metric over $X=\Gamma\backslash D$. We also choose and fix a metric on the local system $\mathbb{V}$. The $L^2$-cohomology groups $H^*_{(2)}(X,\mathbb{V})$ are defined to be the cohomology groups of the complex $(C^{\bullet},d)$, where $C^k$ is the space of $\mathbb{V}$-valued smooth $k$-forms over $X$ such that the form itself and its exterior derivative are both square-integrable; the differential map $d$ is simply the restriction of the usual exterior differential. Zucker's conjecture compares the intersection cohomology over $\overline{X}$ and $L^2$-cohomology over $X$. It was proved in different ways by Eduard Looijenga (\cite{Loo}) and by Leslie Saper and Mark Stern (\cite{SS}).

\begin{theorem}[Zucker's conjecture]
As real vector spaces, the intersection cohomology $IH^*(\overline{X},\mathbb{V}_{\R})$
is isomorphic to the $L^2$-cohomology $H^*_{(2)}(X,\mathbb{V}_{\R})$.
\end{theorem}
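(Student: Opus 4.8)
The plan is to recast both sides sheaf-theoretically on the Baily--Borel compactification $\overline{X}$ and to invoke the uniqueness of the intersection complex. Let $i\colon X\hookrightarrow\overline{X}$ be the inclusion and let $\mathcal{L}^\bullet$ be the \emph{$L^2$-complex} on $\overline{X}$: to an open $U\subset\overline{X}$ it assigns the complex of $\mathbb{V}_{\R}$-valued smooth forms on $U\cap X$ that are square-integrable together with their exterior derivative, for the invariant metric. Because that metric is complete and the $L^2$-forms are stable under multiplication by smooth bounded functions, $\mathcal{L}^\bullet$ is a complex of fine (hence soft) sheaves, so its hypercohomology over $\overline{X}$ computes exactly $H^*_{(2)}(X,\mathbb{V}_{\R})$. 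It therefore suffices to produce a quasi-isomorphism $\mathcal{L}^\bullet\simeq IC_{\overline{X}}(\mathbb{V}_{\R})$ in $\mathcal{D}_{\overline{X}}$, after the appropriate shift.

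By the characterization of $IC_{\overline{X}}(\mathbb{V}_{\R})$ recalled above --- up to shift it is the unique object of $\mathcal{P}_{\overline{X}}$ restricting to $\mathbb{V}_{\R}$ on $X$ and possessing neither subobjects nor quotients supported on the boundary, equivalently the unique one meeting the middle-perversity support and cosupport conditions --- I would verify these properties for $\mathcal{L}^\bullet$. The normalization $\mathcal{L}^\bullet|_X\simeq\mathbb{V}_{\R}$ is the ordinary $L^2$-Poincaré lemma on a ball, where the $L^2$-constraint is vacuous. Self-duality of the $L^2$-complex (Poincaré duality for $L^2$-cohomology, implemented by the Hodge star) identifies the cosupport condition with the support condition under Verdier duality, so it is enough to control the stalk cohomology along the boundary strata.

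The crux is the local computation. Stratify $\overline{X}$ by rational boundary components indexed by $\Q$-parabolic subgroups $P=MN$ of $G$, where the unipotent radical has Lie algebra $\mathfrak{n}$. Near a point $x$ of such a stratum the neighborhood $U\cap X$ is, up to quasi-isometry, a Siegel-set fibration whose fibers are nilmanifolds shrinking at controlled rates along the $\mathfrak{n}$-directions over a lower-rank locally symmetric base. Borel--Garland and Nomizu-type arguments reduce the local $L^2$-cohomology $\varinjlim_{U\ni x}H^*_{(2)}(U\cap X,\mathbb{V}_{\R})$ to the nilpotent Lie algebra cohomology $H^*(\mathfrak{n},V)$, retaining only those summands whose weight under the central torus of $M$ satisfies a strict positivity condition forced by square-integrability. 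Kostant's theorem describes $H^*(\mathfrak{n},V)$ as a sum of $M$-irreducibles indexed by Weyl group elements, and the weight inequality selects precisely the cohomological degrees below the middle dimension, which is exactly the support bound demanded by the intersection-complex axioms.

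The main obstacle is this last step: making the reduction to $(\mathfrak{n},\cdot)$-cohomology analytically rigorous and pinning down the exact weight range cut out by the $L^2$-condition. One must (i) fix a sufficiently precise asymptotic model of the invariant metric near each boundary component, including the degeneration rates along $\mathfrak{n}$ and along the base; (ii) prove a concentration/vanishing theorem for $L^2$-harmonic forms on the model fibration, controlling the bottom of the Laplacian's spectrum so that only the Kostant representatives in the admissible weight range are square-integrable; and (iii) organize the argument as an induction on the parabolic rank, since each boundary stratum is itself a locally symmetric space to which the same analysis applies. Step (ii) is where the hard analysis lies, and it is the technical heart of both the Looijenga and the Saper--Stern proofs.
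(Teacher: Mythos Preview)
The paper does not give its own proof of this theorem: it is stated as a known result and attributed to Looijenga \cite{Loo} and Saper--Stern \cite{SS}, with no further argument. So there is no in-paper proof to compare your proposal against.

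That said, your sketch is a fair high-level outline of the strategy underlying those references, particularly Looijenga's: realize $L^2$-cohomology as the hypercohomology of a fine $L^2$-complex on $\overline{X}$, then identify that complex with $IC_{\overline{X}}(\mathbb{V}_{\R})$ by verifying the support/cosupport axioms via a local computation along boundary strata using Kostant's theorem on $H^*(\mathfrak{n},V)$. Two caveats worth flagging. First, your sentence that ``the weight inequality selects precisely the cohomological degrees below the middle dimension'' is an oversimplification: the $L^2$-truncation is by \emph{weight} (a condition on the $A$-action on the Kostant constituents), and translating this into the perverse degree bound is itself a nontrivial combinatorial step. Second, you correctly identify step~(ii) as the crux, but your description understates how different the two published approaches are here---Looijenga proceeds more combinatorially via weighted cohomology and the structure of the Baily--Borel boundary, while Saper--Stern carry out a genuine spectral/analytic estimate; your outline is closer in spirit to the former. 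As a roadmap your proposal is sound, but it is a summary of deep theorems rather than a proof, which is exactly how the paper treats the result.
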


Let $G$ be a Lie groups and $U$ be a $(\mathfrak{g},K)$-module. The relative Lie algebra cohomology groups are the cohomology groups of the complex $(C^{\cdot},d)$ where $C^q=\Hom_\mathfrak{k}(\wedge^q(\mathfrak{g}/\mathfrak{k}), U)$. See \cite{BW} for more details and properties. It is well-known (de Rham isomorphism) that we may compute the cohomology groups of local systems by relative Lie algebra cohomology:
$$H^*(X,\mathbb{V})\cong H^*(\mathfrak{g},K,C^{\infty}(\Gamma\backslash G(\mathbb{R}))\otimes V).$$
The $L^2$-cohomology groups have a similar description
$$H^*_{(2)}(X,\mathbb{V})\cong H^*(\mathfrak{g},K,L^2(\Gamma\backslash G(\mathbb{R}))^{\infty}\otimes V).$$
The inclusion  $L^2(\Gamma\backslash G(\mathbb{R}))^{\infty}\to C^{\infty}(\Gamma\backslash G(\mathbb{R}))$ induces a map $$H^*(\mathfrak{g},K,L^{2}(\Gamma\backslash G(\mathbb{R}))^{\infty}\otimes V)\to H^*(\mathfrak{g},K,C^{\infty}(\Gamma\backslash G(\mathbb{R}))\otimes V),$$ which recovers the natural map
$H^*_{(2)}(X,\mathbb{V})\to H^*(X,\mathbb{V})$
under the above isomorphisms.

\vspace{.3in}

The unitary representation $L^2(\Gamma\backslash G)$ is the direct sum of a discrete spectrum and a continuous spectrum. The discrete spectrum is a Hilbert direct sum of unitary representations $U_{\pi}$, each with finite multiplicity $m_{\pi}(\Gamma)$. The contribution from the continuous spectrum to the $(\mathfrak{g}, K)$-cohomology vanishes if $G$ has a discrete series. This is always true in our paper. Let $\mathcal{A}(G,\Gamma)$ be the space of automorphic forms with respect to $\Gamma$ and $\mathcal{A}^2(G,\Gamma)=\mathcal{A}(G,\Gamma)\cap L^2(\Gamma\backslash G)$. Then the decomposition of $L^2_{\mathrm{disc}}(\Gamma\backslash G)$ is the same as the decomposition of $\mathcal{A}^2(G;\Gamma)$ as $\mathcal{A}^2(G;\Gamma)$ is precisely the space of smooth $K$-finite, $Z(\mathfrak{g})$-finite vectors in the discrete spectrum $L^2_{\mathrm{disc}}(\Gamma\backslash G)$. In summary, 
 the $L^2$-cohomology groups remain unchanged if we replace $L^2(\Gamma\backslash G(\mathbb{R}))^{\infty}$ with $\mathcal{A}^2(G;\Gamma)$, the subspace of $L^2$-automorphic forms:

$$H^*_{(2)}(X,\mathbb{V})=H^*(\mathfrak{g},K,\mathcal{A}^2(G;\Gamma)\otimes V).$$

Combined with Zucker's conjecture, the intersection cohomology  can be interpreted as relative Lie algebra cohomology:
$$IH^*(\overline{X},\mathbb{V})=H^*(\mathfrak{g},K,\mathcal{A}^2(G;\Gamma)\otimes V)=\bigoplus\limits_{U_{\pi}\in \mathcal{A}^2(G;\Gamma)} m_{\pi}(\Gamma)H^*(\mathfrak{g},K,U_{\pi}\otimes V).$$

The discrete spectrum is the direct sum of the cuspidal and the residual spectrum. Correspondingly, the intersection cohomology is the direct sum of a cuspidal part $IH^*_{\mathrm{cusp}}(\overline{X},\mathbb{V})$ and a residual part $IH^*_{\mathrm{res}}(\overline{X},\mathbb{V})$. The natural map $IH^*(\overline{X,},\mathbb{V})\to H^*(X,\mathbb{V})$ is injective on the cuspidal part, but the restriction to the residual part is usually neither injective nor surjective.

\vspace{1cm}

Zucker's conjecture is just an isomorphism of $\R$-vector spaces. However, both sides carry natural $\R$-Hodge structures. The Hodge structure of the intersection cohomology comes from Saito's mixed Hodge module theory (and is defined over $\QQ$) while the Hodge structure of $L^2$-cohomology comes from harmonic analysis. It is an open question whether the isomorphism is actually an isomorphism of Hodge structures.

\begin{hzconj}
The isomorphism in Theorem 2.8 is an isomorphism of $\mathbb{R}$-Hodge structures.
\end{hzconj}

We know H-Z in several cases:
\begin{itemize}[leftmargin=0.4cm]
\item $\Gamma$ cocompact: because $Y=Y^*$ (no boundary);
\item Hilbert modular surface or complex $n$-balls (Zucker \cite{Z}), trivial coefficient $\mathbb{C}$;
\item Hilbert modular varieties \cite{MSYZ};
\item in general, for $i\leq c:=$ codimension of singular locus in $Y^*$.  In this case actually $\IH^i(Y^*,\mathcal{H}^{2p-i})=W_{2p}H^i(Y,\mathcal{H}^{2p-i})=H^i_{(2)}(Y^*,\mathcal{H}^{2p-i})$ \cite[\S5]{HZ}.
\item another general fact is that the HSs on $H^*_{(2)}$ and $\IH^*$ are equal if we replace $Y^*$ by $\hat{Y}$, though this isn't really a case of H-Z.  It is due to Cattani-Kaplan-Schmid, Kashiwara-Kawai, and Saito (cf.~\cite{PSa}).
\end{itemize}
Even in the absence of the H-Z conjecture, the Hodge number computations that follow in \S\ref{S3} still provide upper bounds on the Hodge numbers of $W_{2p}H^i(Y,\mathcal{H}^{2p-i})$.  So they do mean something geometric.  In the case $i\leq c$, they exactly compute the Hodge numbers of this space.

\subsection{The Hodge conjecture}\label{S2.4}
Write $\Gamma(H):=\mathrm{Hom}_{\text{MHS}}(\mathbb{Q}(0),H)$ for Hodge $(0,0)$-classes in a MHS $H$.

\begin{conjecture}
Let $Y/\mathbb{C}$ be a quasi-projective algebraic variety (possibly neither smooth nor complete) of dimension $d$.  Then the cycle class map
$$\mathrm{CH}^p(Y)\to \Gamma (H^{\textup{BM}}_{2(d-p)}(Y,\mathbb{Q}(p-d)))\;\left(\cong\; \Gamma(H^{2p}(Y,\mathbb{Q}(p)))\text{ for $Y$ smooth}\right)$$
is surjective.\footnote{Here $H_{k}^{\text{BM}}(Y)$ denotes Borel-Moore homology \cite{PSt}, which carries a natural MHS.  It is equal to the relative homology group $H_{k}(\bar{X},\bar{X}\setminus X)$ for any compactification $\bar{X}$, but is independent of the choice of $\bar{X}$.} We abbreviate this conjecture by $\textup{HC}^p(X)$.
\end{conjecture}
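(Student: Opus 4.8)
The statement as worded is the Hodge conjecture for an \emph{arbitrary} quasi-projective $Y$, so no unconditional argument is possible; what can realistically be carried out is a reduction of $\mathrm{HC}^p(Y)$ to the classical case of smooth projective varieties. The plan is therefore to prove that $\mathrm{HC}^p(Y)$ holds for every quasi-projective $Y$ \emph{provided} the classical Hodge conjecture is granted for all smooth projective varieties. The organizing principle is that a Hodge $(0,0)$-class in the mixed Hodge structure $H^{\mathrm{BM}}_{2(d-p)}(Y,\mathbb{Q}(p-d))$ is, by definition, a morphism of MHS out of $\mathbb{Q}(0)$; since morphisms of MHS are strict for the weight filtration, such a class is detected on a single weight-graded piece. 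The key structural fact I would invoke is Deligne's theorem that the weight-graded pieces of the Borel--Moore homology of any variety are subquotients of the cohomology of \emph{smooth projective} varieties. So the task splits into matching that one graded piece with algebraic cycles and then promoting the match to $Y$ itself by functoriality of the cycle class map.

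First I would treat the smooth (but possibly non-complete) case, where $\Gamma\bigl(H^{\mathrm{BM}}_{2(d-p)}(Y,\mathbb{Q}(p-d))\bigr)\cong\Gamma\bigl(H^{2p}(Y,\mathbb{Q}(p))\bigr)$. Choose a smooth projective compactification $j:Y\hookrightarrow \bar Y$ with normal-crossing boundary (Hironaka). Deligne's weight theory gives $W_{2p}H^{2p}(Y,\mathbb{Q})=\mathrm{im}\!\left(H^{2p}(\bar Y,\mathbb{Q})\to H^{2p}(Y,\mathbb{Q})\right)$, and because $\Gamma\bigl(H^{2p}(Y,\mathbb{Q}(p))\bigr)$ lies in this lowest weight piece, every Hodge class on $Y$ is the restriction of a class from $\bar Y$. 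The surjection $H^{2p}(\bar Y)\twoheadrightarrow W_{2p}H^{2p}(Y)$ is a map of polarizable pure Hodge structures, which splits by semisimplicity of that category, so the Hodge class lifts to a Hodge class on $\bar Y$; the classical conjecture makes it algebraic on $\bar Y$, and restricting the resulting cycle along $j^{*}$ (compatibility of the cycle class map with flat pullback) produces the required cycle on $Y$.

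Next I would pass to a general, possibly singular, $Y$ via a cubical (or smooth proper) hyperresolution $\pi_\bullet:Y_\bullet\to Y$ with each $Y_i$ smooth projective (Guillén--Navarro Aznar, Deligne). The associated weight spectral sequence computes the MHS on $H^{\mathrm{BM}}_{\ast}(Y)$ with $E_1$-terms built from the cohomology of the $Y_i$, and it degenerates at $E_2$. A $(0,0)$-class in $H^{\mathrm{BM}}_{2(d-p)}(Y,\mathbb{Q}(p-d))$ therefore originates from a Hodge class on the smooth projective level of the hyperresolution; applying the smooth case to realize it by a cycle there and then pushing that cycle forward along the proper maps $\pi_i$ (compatibility of the cycle class map with proper pushforward in Borel--Moore homology) yields a class in $\mathrm{CH}^p(Y)$ with the prescribed image. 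Tracking the weight and Tate-twist bookkeeping through the spectral sequence, and checking that the pushed-forward cycle lands in the correct graded piece and survives to $E_\infty$, is the routine but delicate part of this step.

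The genuine obstacle is not in the reduction but in its hypothesis: the classical Hodge conjecture for smooth projective varieties is exactly what remains open. Thus the most one proves in full generality is the conditional equivalence
$$\mathrm{HC}^p(Y)\ \text{for all quasi-projective }Y\ \Longleftrightarrow\ \mathrm{HC}^p(\bar Y)\ \text{for all smooth projective }\bar Y,$$
whose forward implication is immediate by specialization and whose content is the reverse implication sketched above. Unconditional progress is then confined to those $Y$ for which the relevant weight-graded pieces are spanned by Hodge classes already known to be algebraic; isolating such classes explicitly, through the intersection-cohomology decomposition of the earlier sections, is precisely the strategy the rest of the paper pursues.
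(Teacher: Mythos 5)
The statement you were asked to prove is presented in the paper as a \emph{conjecture}: it is the Hodge conjecture itself, formulated for arbitrary quasi-projective varieties via Borel--Moore homology, and the paper neither proves it nor claims to. So your opening diagnosis is exactly right, and the honest deliverable is what you produced: a conditional reduction of $\mathrm{HC}^p(Y)$ for all quasi-projective $Y$ to the classical conjecture for smooth projective varieties. That reduction is correct and is in fact known (it is essentially Jannsen's observation; the paper's choice of the functor $\Gamma(H^{\mathrm{BM}}_{2(d-p)}(Y,\mathbb{Q}(p-d)))$ is made precisely so that this reduction is available). Your smooth case --- Hodge classes lie in $W_{2p}H^{2p}(Y,\mathbb{Q})=\mathrm{im}\left(H^{2p}(\bar Y,\mathbb{Q})\to H^{2p}(Y,\mathbb{Q})\right)$, lift by semisimplicity of polarizable pure Hodge structures, apply the classical HC on $\bar Y$, restrict the cycle --- runs exactly parallel to what the paper \emph{does} prove, namely Proposition \ref{p1}: there the implications $\HC^p(\hat{X})\Rightarrow\HC^p(\bar{X})\Rightarrow\HC^p(X)$ are obtained from a single resolution $\rho\colon\hat{X}\to\bar{X}$, the identification $W_0H^{\mathrm{BM}}_{2q}(\bar{X},\Q(q))=\mathrm{gr}^W_0H^{2p}(\bar{X},\Q(p))\hookrightarrow H^{2p}(\hat{X},\Q(p))$, proper pushforward of cycles, and the localization diagram relating $\mathrm{CH}^p$ and $\mathrm{Hg}^p$ of $Z$, $\bar{X}$, $X$. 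Where you differ is the singular step: the paper gets by with one resolution of one compactification, while you invoke a full smooth projective hyperresolution and the weight spectral sequence. The paper's route is more economical and suffices for its applications; yours is more systematic and handles arbitrary singular $Y$ in one stroke.

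One spot in your singular step needs to be tightened rather than waved at. Knowing that $\mathrm{gr}^W$ of $H^{\mathrm{BM}}_*(Y)$ is a subquotient of cohomology of smooth projective varieties is not by itself enough to move cycles: you must (a) lift the given graded Hodge class from the $E_2$-subquotient to the kernel of the $d_1$ differential on the relevant weight row of $E_1$ --- semisimplicity of the pure polarizable category does this, but only because the differentials there are morphisms of Hodge structures; (b) realize it by a cycle on the top-level piece $Y_0$ and push forward along the augmentation $\pi_0\colon Y_0\to Y$, using that the edge map of the spectral sequence is induced by $\pi_{0*}$ on Borel--Moore homology, so the dimension/twist bookkeeping matches; and (c) compare the resulting cycle class with the original class in $H^{\mathrm{BM}}_{2(d-p)}(Y,\mathbb{Q}(p-d))$ itself, not merely in the graded: their difference is a rational $(0,0)$-vector in $W_{-1}$, and one must use $W_{-1}\cap F^0\cap H_{\mathbb{Q}}=0$ (no nonzero rational $(0,0)$-classes in negative weights, by strictness and the pure case) to conclude it vanishes. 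This last point is exactly the strictness principle your first paragraph announces, so it belongs in the argument explicitly rather than under ``routine but delicate.'' With those details supplied, your conditional equivalence stands, and your closing remark correctly identifies how the paper actually uses the conjecture: not by proving it, but by verifying vacuously or concretely, via Propositions \ref{p2}--\ref{p4} and the $(\mathfrak{g},K)$-cohomology computations, that the relevant Hodge classes are algebraic in the specific families at hand.
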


Now fix a smooth quasi-projective $X$, a possibly singular compactification $\bar{X}$ and a resolution $\hat{X}\overset{\rho}{\twoheadrightarrow}\bar{X}$.  Write $Z=\bar{X}\setminus X$ and $\hat{Z}=\hat{X}\setminus X$ for the complements; assume that $\hat{Z}$ is a normal-crossing divisor.  Statements about the relationship between $X$ and $\bar{X}$ will obviously apply to $\hat{X}$ too, since $\bar{X}$ is more general.

\begin{proposition}\label{p1}
\textbf{(i)} $\HC^p(\hat{X}) \implies \HC^p(\bar{X}).$

\noindent\textbf{(ii)} $\HC^p(\bar{X})\implies \HC^p(X).$

\noindent\textbf{(iii)} $\HC^p(X)+\HC^p(Z)\implies \HC^p(\bar{X}).$
\end{proposition}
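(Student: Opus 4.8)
The plan is to deduce all three implications from two structural inputs: the functoriality of the cycle class map $cl_Y\colon \mathrm{CH}^p(Y)\to \Gamma\!\big(H^{\mathrm{BM}}_{2(\dim Y-p)}(Y,\Q(p-\dim Y))\big)$ with respect to proper pushforward and open restriction, and weight estimates on Borel--Moore homology that confine the relevant Hodge classes to a pure, hence semisimple, piece. Hodge classes are the image of $\Q(0)$ and so sit in weight $0$; the cycle map intertwines the proper pushforwards $\rho_*,i_*$ and the open restriction $j^*$ with the corresponding morphisms of mixed Hodge structures, and is compatible with the right-exact Chow localization sequence $\mathrm{CH}_{d-p}(Z)\to \mathrm{CH}_{d-p}(\bar X)\to \mathrm{CH}_{d-p}(X)\to 0$ and with the long exact Borel--Moore sequence of the pair $(Z,\bar X,X)$. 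The computation I would record first is that for a \emph{proper} $Y$ the group $H^{\mathrm{BM}}_{2(d-p)}(Y,\Q(p-d))$ has weights $\ge 0$, so its weight-$0$ part equals $\mathrm{Gr}^W_0$ and is a pure polarizable — therefore semisimple — Hodge structure, while $H^{\mathrm{BM}}_{2(d-p)-1}(Z,\Q(p-d))$ has weights $\ge 1$ and hence carries no Hodge class.

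For \textbf{(i)} I would use that $\rho\colon \hat X\to\bar X$ is proper and surjective, so $\rho_*$ is surjective on rational Borel--Moore homology (decomposition theorem, or a trace argument). As $\hat X$ is smooth and proper, $H^{\mathrm{BM}}_{2(d-p)}(\hat X,\Q(p-d))$ is pure of weight $0$ and $\rho_*$ carries it onto the pure piece $\mathrm{Gr}^W_0 H^{\mathrm{BM}}_{2(d-p)}(\bar X,\Q(p-d))$ in which the Hodge classes of $\bar X$ live; semisimplicity lets me lift a Hodge class $\alpha$ on $\bar X$ to a Hodge class $\hat\alpha$ on $\hat X$, so $\HC^p(\hat X)$ gives $\hat\alpha=cl(\hat W)$ and then $\alpha=\rho_*\hat\alpha=cl(\rho_*\hat W)$. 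For \textbf{(ii)}, given a Hodge class $\alpha$ on the smooth open $X$, the connecting map of the localization sequence sends it into $H^{\mathrm{BM}}_{2(d-p)-1}(Z,\Q(p-d))$, which has weights $\ge 1$; hence $\partial\alpha=0$ and $\alpha\in\mathrm{Im}(j^*)$. On the weight-$0$ graded pieces $j^*$ is a morphism of pure polarizable Hodge structures, so by strictness and semisimplicity I may choose the lift to be a \emph{Hodge} class $\beta$ on $\bar X$; then $\HC^p(\bar X)$ gives $\beta=cl(\bar W)$ and restriction yields $\alpha=j^*cl(\bar W)=cl(\bar W|_X)$.

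For \textbf{(iii)} I would begin with a Hodge class $\beta$ on $\bar X$, restrict it to $X$, and use $\HC^p(X)$ to write $j^*\beta=cl(W)$ with $W\in\mathrm{CH}^p(X)$. Extending $W$ to a cycle $\bar W$ on $\bar X$ by the surjectivity in the Chow localization sequence, the class $\eta:=\beta-cl(\bar W)$ restricts to $0$ on $X$, so by exactness of the Borel--Moore sequence it lies in $\mathrm{Im}\,i_*$ and is a Hodge class supported on $Z$. Because $Z$ is proper, the weight-$0$ part of $H^{\mathrm{BM}}_{2(d-p)}(Z,\Q(p-d))$ is again pure and $i_*$ is a morphism of pure Hodge structures, so $\eta$ lifts to a Hodge class $\zeta$ on $Z$; here one must match codimensions, reading $\HC^p(Z)$ as the Hodge conjecture for the $(d-p)$-dimensional cycles on $Z$, i.e. for the classes in $H^{\mathrm{BM}}_{2(d-p)}(Z,\Q(p-d))$. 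Then $\HC(Z)$ gives $\zeta=cl_Z(V)$, and $\beta=cl(\bar W)+i_*cl_Z(V)=cl(\bar W+i_*V)$.

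The main obstacle is precisely the lifting of Hodge classes along $\rho_*$, $i_*$ and $j^*$. The functor $\Gamma=\mathrm{Hom}_{\mathrm{MHS}}(\Q(0),-)$ is only left exact, and the sequences it produces can a priori fail to be exact because of $\mathrm{Ext}^1_{\mathrm{MHS}}$ terms; indeed, without a weight constraint a Hodge class supported on $Z$ need not come from a Hodge class on $Z$, and (iii) would be false. What rescues the argument — and where I expect the real work to be — is the weight bookkeeping: properness forces the twisted Borel--Moore groups into weights $\ge 0$, so every Hodge class in play lies in a pure polarizable, hence semisimple, weight-$0$ structure, which annihilates the $\mathrm{Ext}^1$ obstruction, while the boundary groups for $Z$ sit in weights $\ge 1$ so the connecting maps kill Hodge classes. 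Carefully verifying these weight bounds, the compatibility of $cl$ with both localization sequences, and the codimension matching for the lower-dimensional $Z$ is the substance of the proof.
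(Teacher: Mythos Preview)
Your proposal is correct and follows essentially the same route as the paper's sketch: weight bounds on Borel--Moore homology confine Hodge classes to a pure semisimple piece, and then the commuting ladder of Chow and Hodge-class localization sequences for $(Z,\bar X,X)$ (which the paper just records, citing \cite{Ar2}) gives (ii) and (iii), while (i) comes from the compatibility of $\rho_*$ with the cycle map together with the surjection onto the weight-$0$ piece. One small caveat: your claim that $\rho_*$ is surjective on \emph{all} rational Borel--Moore homology is false in general (e.g.\ a nodal cubic and its normalization), but you only use---and correctly state---surjectivity onto $\mathrm{Gr}^W_0$, so the argument stands; note that the paper argues (i) dually, via the injection $\mathrm{gr}^W_0 H^{2p}(\bar X,\Q(p))\hookrightarrow H^{2p}(\hat X,\Q(p))$ and pushforward of cycles.
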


\begin{proof}[Sketch]
Writing $q=d-p$, (i) is essentially because $$W_{0}H_{2q}^{\text{BM}}(\bar{X},\Q(q))=\mathrm{gr}^W_0H^{2p}(\bar{X},\mathbb{Q}(p))\hookrightarrow H^{2p}(\hat{X},\Q(p)),$$ which gives injectivity on Hodge classes, whereas cycles push down under $\rho$.  For (ii) and (iii), the main idea (as in \cite{Ar2}) is the diagram
$$\xymatrix{\mathrm{Hg}^p(Z)\ar [r] & \mathrm{Hg}^p(\bar{X}) \ar [r] & \mathrm{Hg}^p(X)\ar [r] & 0\\ \mathrm{CH}^p(Z) \ar [r] \ar [u] & \mathrm{CH}^p(\bar{X}) \ar [r] \ar [u] & \mathrm{CH}^p(X) \ar [r] \ar [u] & 0.}$$
with exact rows.
\end{proof}

The conclusion here is that one can focus on the HC for $X$ without further discussion, because (a) it has a well-defined meaning and (b) the boundary in a compactification should be thought of as a separate issue (or a non-issue if $\dim(X)\leq 4$).

Now consider $X\overset{\pi}{\twoheadrightarrow} Y$ a smooth projective morphism, with $X$ and $Y$ smooth quasi-projective.  We also have compactifications $\bar{\pi}\colon \bar{X}\to \bar{Y}$, $\hat{\pi}\colon \hat{X}\to \hat{Y}$ to projective morphisms, with assumptions as above ($\bar{Y},\bar{X}$ possibly singular; $\hat{Y},\hat{X}$ smooth with NC boundary).  Writing $\mathcal{H}^r:=R^r\pi_*\mathbb{Q}_X$, there are cycle maps from $\mathrm{CH}^p(X)$ to $\Gamma(H^i(Y,\mathcal{H}^{2p-i}(p)))$ by splitting the cycle-class into Leray graded pieces.  Let $\HC^p_i(\pi)$ stand for surjectivity of these maps.  

Moreover, by the Decomposition Theorem for $\hat{\pi}$ (e.g.~see \cite{KL} for a convenient presentation), we get cycle maps $\mathrm{CH}^p(\hat{X})\to \Gamma(\IH^i(\hat{Y},\mathcal{H}^{2p-i}(p))).$  We can also apply the DT to the composition $\bar{\pi}\circ \rho$ to get maps $\mathrm{CH}^p(\hat{X})\to \Gamma(\IH^i(\bar{Y},\mathcal{H}^{2p-i}(p)))$. Write $\HC^p_i(\hat{\pi})$, $\HC^p_i(\bar{\pi})$ for surjectivity of these maps.

\begin{proposition}\label{p2}
\textbf{(i)} $\HC^p_i(\pi)\;(\forall i)\implies \HC^p(X).$

\noindent \textbf{(ii)} $\HC^p_i(\pi)\;(\forall (p,i)\in Q)\implies \HC^p(X)\;(\forall p)$, where $Q$ is the parallelogram defined by $0\leq i\leq d_Y$ and $\tfrac{i}{2}\leq p\leq \tfrac{i+(d_X-d_Y)}{2}$. 
\end{proposition}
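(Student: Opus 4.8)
The plan for (i) is to split the single cycle map $\mathrm{CH}^p(X)\to\Gamma(H^{2p}(X,\Q(p)))$ into its Leray graded pieces and then reassemble. First I would invoke Deligne's degeneration theorem: since $\pi$ is smooth and projective, the Leray spectral sequence $E_2^{i,j}=H^i(Y,\mathcal H^j)\Rightarrow H^{i+j}(X)$ degenerates at $E_2$, and the relative Hard Lefschetz decomposition splits it compatibly with the mixed Hodge structures. This yields an isomorphism of MHS
$$H^{2p}(X,\Q(p))\cong \bigoplus_i H^i(Y,\mathcal H^{2p-i}(p)),$$
and, because $\Gamma(-)=\Hom_{\mathrm{MHS}}(\Q(0),-)$ is additive, a corresponding splitting $\Gamma(H^{2p}(X,\Q(p)))=\bigoplus_i\Gamma(H^i(Y,\mathcal H^{2p-i}(p)))$. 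Under this identification the map whose surjectivity is $\HC^p_i(\pi)$ is exactly the $i$-th component $\mathrm{pr}_i\circ\mathrm{cl}$ of the total cycle map $\mathrm{cl}$.

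The essential point, and the main obstacle, is that surjectivity of every component $\mathrm{pr}_i\circ\mathrm{cl}$ does not formally give surjectivity of $\mathrm{cl}=(\mathrm{pr}_i\circ\mathrm{cl})_i$: subtracting a cycle to correct one graded piece can disturb the others. I would resolve this using the fact that the Leray decomposition is \emph{motivic}, i.e. the idempotents $\Pi_i$ cutting out the summands are induced by algebraic self-correspondences of $X$ (Arapura). Then $\Pi_i$ carries classes of algebraic cycles to classes of algebraic cycles, so $\mathrm{cl}_i(Z)=\Pi_i[Z]$ is itself algebraic and the $\Pi_i$ are orthogonal idempotents on cohomology. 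Given a target $(\alpha_i)_i$, I would choose by $\HC^p_i(\pi)$ cycles $Z_i$ with $\mathrm{cl}_i(Z_i)=\alpha_i$ and set $Z=\sum_i\Pi_i(Z_i)$; orthogonality gives $\mathrm{cl}_j(Z)=\alpha_j$ for all $j$, proving $\HC^p(X)$. The step to check carefully is that this algebraicity of the Leray projectors applies in the present quasi-projective, non-compact situation and that the $\Pi_i$ preserve $\mathrm{CH}^p(X)_\Q$.

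For (ii) the plan is to deduce $\HC^p_i(\pi)$ for all admissible $(p,i)$ from its validity on the parallelogram $Q$ by Hard Lefschetz symmetries, and then quote (i). The summand $H^i(Y,\mathcal H^{2p-i}(p))$ is nonzero only for $0\le i\le 2d_Y$ and $0\le 2p-i\le 2f$ with $f=d_X-d_Y$, a rectangle in the coordinates (base degree $i$, fibre degree $r=2p-i$); the parallelogram $Q$ is its lower-left quadrant $\{0\le i\le d_Y,\ 0\le r\le f\}$. I would use two algebraic Lefschetz operators: cup product with a relatively ample class gives the relative Hard Lefschetz isomorphism reflecting $r\mapsto 2f-r$ while fixing $i$, and cup product with the pullback of an ample class on the base reflects $i\mapsto 2d_Y-i$ while fixing $r$. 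Both are intersections with divisor classes, hence algebraic correspondences that identify the cycle maps and the $\Gamma$-spaces on the two sides (with the matching Tate twists, which one checks reduce to $p'=f-p+i$ and $p''=p+d_Y-i$ respectively); together they fold the whole rectangle onto $Q$, so $\HC^p_i(\pi)$ on $Q$ propagates to all $(p,i)$.

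The delicate point in (ii) is that ordinary Hard Lefschetz on the base fails for the open variety $Y$. I would handle this by observing that a class in $\Gamma(H^i(Y,\mathcal H^{2p-i}(p)))$ is necessarily of weight $0$, hence lies in the lowest weight part $W_{2p}H^i(Y,\mathcal H^{2p-i})$, which by Cattani--Kaplan--Schmid, Kashiwara--Kawai, and Saito coincides with the intersection cohomology $\IH^i(\bar Y,\mathcal H^{2p-i})$. On intersection cohomology of the projective $\bar Y$, with coefficients in the polarizable variation $\mathcal H^{2p-i}$, both Hard Lefschetz statements hold and are realized by the algebraic operators above, so the reflections are legitimate precisely on the subspaces where the Hodge classes live. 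After folding down to $Q$, part (i) gives $\HC^p(X)$ for every $p$.
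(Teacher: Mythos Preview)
Your outline for (i) is essentially the argument the paper cites from \cite{Ar2}: Deligne's degeneration, together with Arapura's theorem that the Leray projectors are realized by algebraic correspondences on $X$, so that one may correct each graded piece independently. Your caveat about the quasi-projective setting is well placed; \cite{Ar2} does address it.

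For (ii), the Hard Lefschetz reduction strategy is the right one, but your justification contains a factual error. You assert that $W_{2p}H^i(Y,\mathcal H^{2p-i})$ ``coincides with'' $\IH^i(\bar Y,\mathcal H^{2p-i})$ and attribute this to Cattani--Kaplan--Schmid, Kashiwara--Kawai, and Saito. Neither part is correct: the CKS/KK/Saito result identifies the $L^2$- and $\IH$-Hodge structures on a \emph{smooth} compactification $\hat Y$, which is a different statement; and in general one only has the \emph{surjection} $\IH^i(\bar Y,\mathcal H^{2p-i})\twoheadrightarrow W_{2p}H^i(Y,\mathcal H^{2p-i})$ (this is \cite{PSa}, and is precisely the key point the paper highlights and reproves in Proposition~\ref{p3}(ii)). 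The kernel need not vanish, so Hard Lefschetz on $\IH^*$ does not literally descend to an isomorphism on $W_{2p}H^*(Y,-)$.

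The repair is straightforward once you use only the surjection. Given a Hodge class $\alpha\in\Gamma(H^i(Y,\mathcal H^{2p-i}(p)))$ with $i>d_Y$, lift it (nonuniquely) to a Hodge class $\tilde\alpha\in\IH^i(\bar Y,\mathcal H^{2p-i}(p))$, apply the inverse of $L^{i-d_Y}$ on $\IH$, and push down to obtain a Hodge class $\gamma\in W_{2p'}H^{2d_Y-i}(Y,\mathcal H^{2p-i}(p'))$. Since cup product with $L$ commutes with restriction from $\bar Y$ to $Y$, one has $L^{i-d_Y}\gamma=\alpha$; then $\gamma$ is algebraic by $\HC^{p'}_{2d_Y-i}(\pi)$ (a point of $Q$), hence so is $\alpha$. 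The fiber-direction reflection via relative Hard Lefschetz needs no such detour, since it already acts as an isomorphism of local systems over $Y$.
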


\begin{proof}[Discussion]
See \cite{Ar2} for the proof.  One key point is that $\IH^i(\hat{Y},\mathcal{H}^{2p-i})\twoheadrightarrow W_{2p}H^i(Y,\mathcal{H}^{2p-i})$, which means that Hodge classes surject and $\HC^p_i(\hat{\pi})\implies \HC^p_i(\pi)$.  (But this works for $\bar{\pi}$ too as we're about to see.)  More precisely:  a Hodge class in $W_{2p}H^i(Y,\mathcal{H}^{2p-i})$ lifts (nonuniquely) to a Hodge class in $\mathrm{IH}^i(\hat{Y},\mathcal{H}^{2p-i})$ (or $\bar{Y}$); if that is given by a cycle on $\hat{X}$, then obviously the original class is given by its restriction to $X$. 
\end{proof}

\begin{proposition}\label{p3}
\textbf{(i)} $\HC^p_i(\hat{\pi})\implies \HC^p_i(\bar{\pi})$.

\noindent \textbf{(ii)}	$\HC^p_i(\bar{\pi})\implies \HC^p_i(\pi).$

\noindent \textbf{(iii)} $\HC^p_i(\bar{\pi})\;(\forall (p,i)\in Q)\implies \HC^p(X)\;(\forall p).$
\end{proposition}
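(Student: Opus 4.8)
The plan is to run the same cycle-theoretic formalism as in Propositions~\ref{p1} and \ref{p2}, now comparing the three targets $\IH^i(\hat Y,\mathcal H^{2p-i})$, $\IH^i(\bar Y,\mathcal H^{2p-i})$ and $W_{2p}H^i(Y,\mathcal H^{2p-i})$ (all after the twist $(p)$). Two Hodge-theoretic inputs drive everything. First, since the base resolution $\hat Y\to\bar Y$ is proper and restricts to the identity over the common open locus $Y$, the Decomposition Theorem identifies $IC_{\bar Y}(\mathcal H^{2p-i})$ as a direct summand of the pushforward of $IC_{\hat Y}(\mathcal H^{2p-i})$; on hypercohomology this exhibits $\IH^i(\bar Y,\mathcal H^{2p-i})$ as a direct summand of $\IH^i(\hat Y,\mathcal H^{2p-i})$, the projection being induced by proper pushforward. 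Second, the key surjection $\IH^i(\hat Y,\mathcal H^{2p-i})\twoheadrightarrow W_{2p}H^i(Y,\mathcal H^{2p-i})$ used in the discussion of Proposition~\ref{p2} factors through this summand projection, because both maps are restriction to the open $Y$ and the base resolution is an isomorphism there; hence the analogous surjection $\IH^i(\bar Y,\mathcal H^{2p-i})\twoheadrightarrow W_{2p}H^i(Y,\mathcal H^{2p-i})$ holds as well. Both $\IH^i(\bar Y,\mathcal H^{2p-i})$ (as $\bar Y$ is proper) and $W_{2p}H^i(Y,\mathcal H^{2p-i})=\mathrm{gr}^W_{2p}H^i(Y,\mathcal H^{2p-i})$ are pure of weight $2p$, so these surjections are split morphisms of pure polarizable Hodge structures; applying $\Gamma(-)$ therefore preserves surjectivity, i.e. the induced maps on Hodge $(p,p)$-classes are surjective.

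For part~(i) I would record that the two cycle-class maps out of $\mathrm{CH}^p(\hat X)$ fit into a commuting triangle. The map defining $\HC^p_i(\bar\pi)$ uses the Decomposition Theorem for the composite $\hat X\to\bar X\to\bar Y$; via the commutative square relating the two compactifications this composite equals $\hat X\xrightarrow{\hat\pi}\hat Y\to\bar Y$, so by functoriality of proper pushforward the $\bar\pi$-cycle class of $Z$ is the image of its $\hat\pi$-cycle class under the summand projection $\IH^i(\hat Y,\mathcal H^{2p-i})\to\IH^i(\bar Y,\mathcal H^{2p-i})$. Surjectivity of the top map (the hypothesis $\HC^p_i(\hat\pi)$) together with surjectivity of the projection on Hodge classes then yields $\HC^p_i(\bar\pi)$.

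For part~(ii) I would use restriction of cycles to the open part: $Z\in\mathrm{CH}^p(\hat X)$ restricts to $Z|_X\in\mathrm{CH}^p(X)$, and restriction to $Y$ carries the $\bar\pi$-cycle class of $Z$ in $\IH^i(\bar Y,\mathcal H^{2p-i})$ to the Leray-graded cycle class of $Z|_X$ in $W_{2p}H^i(Y,\mathcal H^{2p-i})$. Given $\HC^p_i(\bar\pi)$ and the surjection on Hodge classes $\Gamma(\IH^i(\bar Y,\mathcal H^{2p-i}(p)))\twoheadrightarrow\Gamma(W_{2p}H^i(Y,\mathcal H^{2p-i}(p)))=\Gamma(H^i(Y,\mathcal H^{2p-i}(p)))$, any Hodge class downstairs lifts to a Hodge class in $\IH^i(\bar Y,\mathcal H^{2p-i})$, which is represented by some $Z$; then $Z|_X$ represents the original class, proving $\HC^p_i(\pi)$. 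Part~(iii) is then immediate: by~(ii), $\HC^p_i(\bar\pi)$ for all $(p,i)\in Q$ gives $\HC^p_i(\pi)$ for all $(p,i)\in Q$, and Proposition~\ref{p2}(ii) upgrades this to $\HC^p(X)$ for all $p$.

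The formal deductions above are routine once the two compatibility statements are in place, so I expect the main obstacle to be exactly these bookkeeping points: verifying that proper pushforward along the base resolution realizes the $IC_{\bar Y}$-summand projection compatibly with the algebraic cycle-class maps attached to the two Decomposition-Theorem decompositions, and that restriction to the open $Y$ intertwines the $\IH$-cycle class of $Z$ with the Leray-graded class of $Z|_X$. Both rest on the functoriality of the cycle-class map in Saito's mixed Hodge module formalism and on the commutativity of the square relating $\bar\pi$ and $\hat\pi$. A secondary point, needed to know the target really is $W_{2p}$ and not merely a subspace of it, is the strictness of the weight filtration forcing the pure structure $\IH^i(\bar Y,\mathcal H^{2p-i})$ to surject onto the lowest weight graded piece of $H^i(Y,\mathcal H^{2p-i})$; this I would obtain, as above, by factoring the corresponding statement for $\hat Y$ through the summand projection.
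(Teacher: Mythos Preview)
Your argument is correct, and for parts (i) and (iii) it matches the paper's proof essentially verbatim (the paper phrases (i) via the inclusion $\IH^i(\bar Y,\mathcal H)\hookrightarrow\IH^i(\hat Y,\mathcal H)$ rather than the retraction, but this is the same content).

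For part (ii) you take a genuinely different route. The paper proves the key surjection $\IH^i(\bar Y,\mathcal H^{2p-i})\twoheadrightarrow W_{2p}H^i(Y,\mathcal H^{2p-i})$ \emph{directly} on the singular base, via the localization triangle for the IC Hodge module $\mathsf H^{2p-i}$ on $\bar Y$ and the weight bound $W_{2p}\mathbb H^{i+1}(\bar Y,\imath_*\imath^!\mathsf H^{2p-i})=0$ (since $\imath_*$ and $\imath^!$ do not decrease weights); this is essentially the algebraic case of Peters--Saito. You instead bootstrap from the $\hat Y$-surjection already quoted in the discussion of Proposition~\ref{p2}: since the restriction $\IH^i(\hat Y,\mathcal H)\to H^i(Y,\mathcal H)$ kills the summands supported on $\hat Y\setminus Y$, it factors through the summand projection to $\IH^i(\bar Y,\mathcal H)$, and surjectivity onto $W_{2p}$ descends. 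Your route is more economical here because it reuses a fact already invoked; the paper's route is more self-contained and yields the surjection for $\bar Y$ without any appeal to a resolution, which is conceptually cleaner if one views $\bar Y$ (the Baily--Borel compactification) as the primary object. Either way the cycle-level compatibilities you flag as ``bookkeeping'' are exactly what the paper also takes for granted.
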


\begin{proof}
By the DT, $\IH^i(\bar{Y},\mathcal{H})\subseteq \IH^i(\hat{Y},\mathcal{H})$ is a sub-HS; so classes lift and cycles push forward under $\rho$.  This gives (i).

(ii) is the most subtle.  Clearly, it is enough to show
\begin{equation}\label{e1}
\IH^i(\bar{Y},\mathcal{H}^{2p-i})\twoheadrightarrow W_{2p}H^i(Y,\mathcal{H}^{2p-i})
\end{equation}
so that Hodge classes lift. Essentially the same thing is proved in \cite{PSa} (take the algebraic case of their main theorem).  Here is a brief recap.

If we write $\mathsf{H}^{2p-i}$ for the IC-Hodge-module on $\bar{Y}$ associated to $\mathcal{H}^{2p-i}$, shifted so that the restriction to $Y$ is $\mathcal{H}^{2p-i}$ (not $\mathcal{H}^{2p-i}[d_Y]$), then we have the localization sequence
$$\to \mathbb{H}^i(\bar{Y},\mathsf{H}^{2p-i})\to \mathbb{H}^{i}(\bar{Y},R\jmath_*\jmath^*\mathsf{H}^{2p-i})\to \mathcal{H}^{i+1}(\bar{Y},\imath_*\imath^!\mathsf{H}^{2p-i})\to$$
whose first 2 terms are $\IH^i(\bar{Y},\mathcal{H}^{2p-i})$ and $H^i(Y,\mathcal{H}^{2p-i})$. Since $\imath^!$ and $\imath_*$ do not decrease weights (cf.~\cite[\S14.1.1]{PSt}), $W_{2p}$ of the right-hand term is zero.  Taking $W_{2p}$ of the sequence then gives the desired result.

(iii) is just putting (ii) together with Proposition \ref{p2}(ii).
\end{proof}

Consider the case where $Y$ is a locally symmetric variety, $\bar{Y}=Y^*$ is the BB compactification, and (say) $\hat{Y}$ is a toroidal one; $\hat{X}$ is a smooth compactification of a family $X$ giving one of the Hermitian VHSs over $Y$.  (I'll continue to write $\bar{\pi}$ rather than $\pi^*$ which looks like a pullback.)  The objects you are computing in your paper are the $L^2$-cohomology groups $H^i_{(2)}(Y^*,\mathcal{H}^{2p-i})$, which carry $\mathbb{R}$-HSs coming from (equivalently) $(\mathfrak{g},\mathfrak{k})$-cohomology or harmonic forms.  Everything stated so far is about the IH-groups because they are the ones the DT relates to cohomologies of the total space hence the HC.

\begin{proposition}\label{p4}
\textbf{(i)} $H^i_{(2)}({Y}^*,\mathcal{H}^{2p-i})\cong \IH^i({Y}^*,\mathcal{H}^{2p-i})$ as real vector spaces, and both have $\mathbb{R}$-MHS morphisms to $H^i({Y},\mathcal{H}^{2p-i})$ with the same image, namely $W_{2p}H^i({Y},\mathcal{H}^{2p-i})$.

\noindent \textbf{(ii)} If $(H^i_{(2)}(Y^*,\mathcal{H}^{2p-i}))^{p,p}=\{0\}$, then $\HC^p_i(\pi)$ holds.

\noindent \textbf{(iii)} If $(H^i_{(2)}(Y^*,\mathcal{H}^{2p-i}))^{p,p}=\{0\}$ for all $(p,i)\in Q$, then $\HC^p(X)$ holds.
\end{proposition}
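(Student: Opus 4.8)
The plan is to handle the three parts in order, with part \textbf{(i)} carrying all the content and parts \textbf{(ii)}--\textbf{(iii)} following formally.

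For part \textbf{(i)} I would assemble three ingredients already in hand. Zucker's conjecture supplies the real-linear isomorphism $H^i_{(2)}(Y^*,\mathcal{H}^{2p-i})\cong \IH^i(Y^*,\mathcal{H}^{2p-i})$. On the $\IH$-side, Saito's theory puts a pure $\mathbb{R}$-Hodge structure on $\IH^i(Y^*,\mathcal{H}^{2p-i})$, and the localization sequence used in the proof of Proposition \ref{p3}(ii) gives an $\mathbb{R}$-MHS morphism $\IH^i(Y^*,\mathcal{H}^{2p-i})\to H^i(Y,\mathcal{H}^{2p-i})$ with image exactly $W_{2p}H^i(Y,\mathcal{H}^{2p-i})$, by \eqref{e1}. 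On the $L^2$-side, the map induced by $L^2\hookrightarrow C^\infty$ carries the harmonic $\mathbb{R}$-Hodge structure, which by $L^2$-Hodge theory is pure of weight $2p$; moreover the excerpt already records that this map agrees with the Saito map under Zucker's isomorphism, so the two maps share the same image $W_{2p}H^i$. This proves \textbf{(i)}.

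Next I would record the weight bookkeeping needed downstream. Since $Y$ is smooth quasi-projective and $\mathcal{H}^{2p-i}$ is a polarizable VHS of weight $2p-i$, the MHS on $H^i(Y,\mathcal{H}^{2p-i})$ has weights $\geq 2p$, so $W_{2p}H^i=\mathrm{gr}^W_{2p}H^i$ is pure of weight $2p$. Consequently a Hodge class, i.e.\ an element of $\Gamma(H^i(Y,\mathcal{H}^{2p-i})(p))=\Hom_{\mathrm{MHS}}(\mathbb{Q}(-p),H^i(Y,\mathcal{H}^{2p-i}))$, must land in this lowest-weight piece, giving $\Gamma(H^i(Y,\mathcal{H}^{2p-i})(p))=(W_{2p}H^i)^{p,p}$.

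For part \textbf{(ii)} the idea is that the hypothesis kills the target of the cycle map, so $\HC^p_i(\pi)$ becomes vacuous. By \textbf{(i)} the map $H^i_{(2)}(Y^*,\mathcal{H}^{2p-i})\to H^i(Y,\mathcal{H}^{2p-i})$ is a surjection onto $W_{2p}H^i$ of pure weight-$2p$ $\mathbb{R}$-Hodge structures; a surjective morphism of pure Hodge structures of equal weight is surjective on each $(r,s)$-graded piece, so $(W_{2p}H^i)^{p,p}$ is the image of $(H^i_{(2)}(Y^*,\mathcal{H}^{2p-i}))^{p,p}$. The hypothesis that this last space is $\{0\}$ forces $(W_{2p}H^i)^{p,p}=0$, whence $\Gamma(H^i(Y,\mathcal{H}^{2p-i})(p))=0$ by the weight computation. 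With no Hodge classes to hit, the cycle map $\mathrm{CH}^p(X)\to\Gamma(H^i(Y,\mathcal{H}^{2p-i})(p))$ is trivially surjective, which is $\HC^p_i(\pi)$. Finally, \textbf{(iii)} is purely formal: applying \textbf{(ii)} at each $(p,i)\in Q$ yields $\HC^p_i(\pi)$ for all $(p,i)\in Q$, and Proposition \ref{p2}(ii) upgrades this to $\HC^p(X)$ for all $p$.

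The main obstacle lives entirely inside \textbf{(i)}: one must check that the harmonic $\mathbb{R}$-Hodge structure on $H^i_{(2)}$ is a morphism of $\mathbb{R}$-Hodge structures onto the Saito piece $W_{2p}H^i$, pure of weight $2p$. The delicate point is that I do \emph{not} need the full Harris--Zucker conjecture (equality of the two Hodge structures on all of $H^i_{(2)}\cong\IH^i$); I need only the weaker, unconditional statement that each natural map to $H^i(Y,\mathcal{H}^{2p-i})$ is an $\mathbb{R}$-Hodge-structure morphism with common image the pure weight-$2p$ piece $W_{2p}H^i$. Isolating exactly this much, so that the argument stays unconditional while H--Z remains open, is the crux of the proof.
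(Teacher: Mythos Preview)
Your proposal is correct and follows the same overall line as the paper: for \textbf{(ii)} you show the hypothesis forces $\Gamma(H^i(Y,\mathcal{H}^{2p-i}(p)))=0$ so that $\HC^p_i(\pi)$ holds vacuously, and \textbf{(iii)} is then Proposition~\ref{p2}(ii); this is exactly what the paper does.

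The only difference is in \textbf{(i)}. The paper does not attempt to assemble the argument from Zucker's conjecture, the localization sequence, and $L^2$-Hodge theory as you do; it simply cites Theorem~5.4 of \cite{HZ}, which packages precisely the statement that both $H^i_{(2)}$ and $\IH^i$ map as $\mathbb{R}$-HSs onto $W_{2p}H^i(Y,\mathcal{H}^{2p-i})$ (equivalently, each splits as $W_{2p}H^i\oplus M$ for some weight-$2p$ $\mathbb{R}$-HS $M$). So the ``crux'' you correctly isolate---that the $L^2\hookrightarrow C^\infty$ map is an $\mathbb{R}$-HS morphism with the same image as the Saito map, without assuming Harris--Zucker---is not argued in the paper but delegated to that reference. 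One small overstatement in your write-up: the paper records that $L^2\hookrightarrow C^\infty$ induces the natural map $H^*_{(2)}\to H^*$, but it does \emph{not} record that this agrees with the Saito map under Zucker's isomorphism; that compatibility is part of what \cite{HZ} supplies.
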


\begin{proof}[Sketch]
(i) is Theorem 5.4 of \cite[\S5]{HZ}.  It says we can write (as $\mathbb{R}$-HSs)
\begin{align*}
H^i_{(2)}(Y^*,\mathcal{H}^{2p-i})&\cong W_{2p}H^i(Y,\mathcal{H}^{2p-i})\oplus M' \\
\IH^i(Y^*,\mathcal{H}^{2p-i})&\cong W_{2p}H^i(Y,\mathcal{H}^{2p-i})\oplus M''
\end{align*}
for some weight-$2p$ $\mathbb{R}$-Hodge structures $M'$ and $M''$.

To show (ii), note that the hypothesis implies $\Gamma(H^i(Y,\mathcal{H}^{2p-i}(p)))=\{0\}$.  So $\HC^p_i(\pi)$ is true \emph{vacuously}.

Clearly (iii) is a corollary of (ii) together with Proposition 2(ii).
\end{proof}

So (ii)-(iii) make precise \emph{what} one is proving when checking there are no $\mathbb{R}$-Hodge classes in the relative Lie algebra cohomologies below:  namely, the HC for $X$.  This conclusion does \emph{not} depend on knowledge of the Harris-Zucker conjecture.

\section{Computation of Hodge numbers}\label{S3}

\subsection{Vogan-Zuckerman classification}

Let $G$ be a  semisimple Lie group. Fix a maximal compact subgroup $K$. This is equivalent to choosing a Cartan involution $\theta$ or the Cartan decomposition $\mathfrak{g}=\mathfrak{k}\oplus \mathfrak{p}$. Here we use $\mathfrak{g}_0$ (resp. $\mathfrak{k}_0$) to represent the \textit{real} Lie algebra of $G$ (resp. $K$), and $\mathfrak{g}$ (resp. $\mathfrak{k}$) to represent their complexifications. We always assume that $G$ has a maximal compact torus $T$, which is equivalent to that $G$ has discrete series representations. Let $\mathfrak{t}_0$ be the Lie algebra of $T$, it is a Cartan subalgebra of both $K$ and $G$.

Let $(\pi,U)$ be a $G$-representation. We will not distinguish the representation $U$  and its associated $(\fg,K)$-module. The \textit{relative Lie algebra cohomology groups} $H^*(\mathfrak{g},K,U)$ are defined to be the cohomology groups of the complex $(C^*(\fg, K, U),d)$ where $C^*(\fg,K,U)=\Hom_{K}(\wedge^{*}(\fg/\mathfrak{k}), U)$ (\cite{BW}).

Let $(\pi,U)$ be an irreducible unitary $(\mathfrak{g},K)$-module and $V$ be a finite-dimensional irreducible representation of $G$. We say that $(\pi,U)$ is \textit{cohomological (w.r.t $V$)} if the $(\mathfrak{g},K)$-cohomology groups $H^{*}(\mathfrak{g},K;U\otimes V)\neq 0$. First, we have:
\begin{lemma}
If $H^*(\fg, K, U\otimes V)\neq 0$, then the infinitesimal character of $U$ is the same as the infinitesimal character of $V^*$, and the differential map $d$ in the complex $(C^*(\fg,K, U\otimes V),d)$ is automatically zero.
\end{lemma}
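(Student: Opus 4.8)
The plan is to run the standard harmonic-theory argument for $(\mathfrak{g},K)$-cohomology, splitting the statement into its two assertions and attacking them with two different tools: Wigner's lemma for the infinitesimal character, and the Kuga--Casimir computation for the vanishing of $d$.

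First I would set up the complex explicitly. Using $\mathfrak{g}=\mathfrak{k}\oplus\mathfrak{p}$ one identifies $\mathfrak{g}/\mathfrak{k}\cong\mathfrak{p}$, so that $C^q(\mathfrak{g},K;U\otimes V)=\Hom_K(\wedge^q\mathfrak{p},U\otimes V)$. Since $\wedge^q\mathfrak{p}\otimes V^*$ is a finite-dimensional $K$-module and $U$ is admissible, each $C^q$ is finite-dimensional. I would then equip the complex with a positive-definite inner product built from the $G$-invariant Hermitian form on the unitary module $U$, a $K$-invariant form on $V$, and the Killing form (positive-definite on $\mathfrak{p}$). This lets me define the adjoint codifferential $d^*$ and the Laplacian $\Delta=dd^*+d^*d$, and guarantees $\|d\omega\|^2+\|d^*\omega\|^2=\langle\Delta\omega,\omega\rangle$ for every cochain $\omega$.

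For the infinitesimal-character assertion I would invoke Wigner's lemma. Since $V$ is finite-dimensional, $U\otimes V\cong\Hom(V^*,U)$ as $(\mathfrak{g},K)$-modules, so $H^*(\mathfrak{g},K;U\otimes V)\cong\mathrm{Ext}^*_{(\mathfrak{g},K)}(V^*,U)$. The center $Z(\mathfrak{g})$ acts on this $\mathrm{Ext}$ by functoriality in either variable, and the two actions agree; computing from the source gives multiplication by $\chi_{V^*}(z)$ and from the target multiplication by $\chi_U(z)$, so $\chi_U(z)-\chi_{V^*}(z)$ annihilates the cohomology for every $z\in Z(\mathfrak{g})$. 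If the cohomology is nonzero, this forces $\chi_U=\chi_{V^*}$.

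The vanishing of $d$ then follows from the Kuga lemma, which is the technical heart and the step I expect to be the main obstacle. Expanding $d$ and $d^*$ in an orthonormal basis of $\mathfrak{p}$ and reorganizing the resulting operators, one recognizes the Casimir $\Omega$ of $\mathfrak{g}$; the $\mathfrak{k}$-contributions and the cross-terms cancel on $K$-invariant cochains, and the upshot is that $\Delta$ acts on all of $C^*(\mathfrak{g},K;U\otimes V)$ as the single scalar $\chi_U(\Omega)-\chi_{V^*}(\Omega)$. By the first part this scalar is $0$, so $\Delta=0$ identically; positive-definiteness of the inner product then gives $\|d\omega\|^2+\|d^*\omega\|^2=0$, hence $d\omega=0$ for every $\omega$, i.e.\ $d\equiv0$. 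The delicate points I would watch are verifying that the bracket terms $[\mathfrak{p},\mathfrak{p}]\subset\mathfrak{k}$ drop out of $d$ on the relative complex, and carrying out the Casimir bookkeeping carefully enough to see that the degree-dependent and cross terms really cancel, which is exactly where the Kuga identity does its work.
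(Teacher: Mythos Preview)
The paper does not give its own proof of this lemma; it states it as a standard fact in the Vogan--Zuckerman setup and implicitly refers the reader to \cite{BW}. Your proposal is correct and is exactly the classical argument one finds there: Wigner's lemma for the infinitesimal-character statement, and the Kuga identity $\Delta=\chi_U(\Omega)-\chi_{V^*}(\Omega)$ together with positivity of the inner product for the vanishing of $d$. There is nothing to compare against, and your outline would serve perfectly well as a proof the paper chose to omit.
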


Cohomological $(\mathfrak{g}, K)$-modules are classified by Vogan and Zuckerman (\cite{VZ}) in terms of $\theta$-stable parabolic subalgebras. A $\theta$-stable parabolic subalgebra $\mathfrak{q}=\mathfrak{q}(X)\subset \mathfrak{g}$ is associated to an element $X\in i\mathfrak{t}_0$. It is defined as the direct sum 
$$\mathfrak{q}=\mathfrak{l}\oplus \mathfrak{u},$$
of the centralizer $\mathfrak{l}$ of $X$ and the sum $\mathfrak{u}$ of the positive eigenspaces of $\mathrm{ad}(X)$ ($\mathfrak{t}\subset \mathfrak{l}$ by construction). Since $\theta(X)=X$, the subspace $\mathfrak{q},\mathfrak{l}$ and $\mathfrak{u}$ are all invariant under $\theta$, so is
$$\mathfrak{q}=\mathfrak{q}\cap\mathfrak{k}\oplus \mathfrak{q}\cap\mathfrak{p},$$
and so on.

The Lie algebra $\mathfrak{l}$ is the complexification of $\mathfrak{l}_0=\mathfrak{l}\cap \mathfrak{g}_0$. Let $L$ be the connected subgroup of $G$ with Lie algebra $\mathfrak{l}_0$. Fix a positive system $\Delta^+(\mathfrak{l})$ of the roots of $\mathfrak{t}$ in $\mathfrak{l}$. Then $\Delta^+(\mathfrak{g},\mathfrak{t})=\Delta^+(\mathfrak{l})\cup \Delta(\mathfrak{u})$ is a positive system of the roots of $\mathfrak{t}$ in $\mathfrak{g}$. Let $\rho$ be half the sum of the roots in $\Delta^+(\mathfrak{g},\mathfrak{t})$ and $\rho(\mathfrak{u}\cap\mathfrak{p})$ half the sum of roots in $\mathfrak{u}\cap\mathfrak{p}$. 

A one-dimensional representation $\lambda:\mathfrak{l}\to\mathbb{C}$ is called \textit{admissible} if it satisfies the following conditions:

\begin{enumerate}[leftmargin=0.8cm]
\item[(1)]$\lambda$ is the differential of a unitary character of $L$,
\item[(2)]if $\alpha \in\Delta(\mathfrak{u})$, then $\langle\alpha,\lambda|_{\mathfrak{t}}\rangle\geq 0$.
\end{enumerate}

Given $\mathfrak{q}$ and an admissible $\lambda$, let $\mu(\mathfrak{q},\lambda)$ be the representation of $K$ of highest weight $\lambda|_{\mathfrak{t}}+2\rho(\mathfrak{u}\cap\mathfrak{p})\footnote{The restriction of $\Delta^+(\mathfrak{g},\mathfrak{t})$ to $\mathfrak{k}$ is a positive system. This weight is dominant with respect to this positive system.}$.

\begin{proposition}
There exists a unique irreducible unitary $(\mathfrak{g},K)$-module $A_{\mathfrak{q}}(\lambda)$ such that
\begin{enumerate}[leftmargin=0.8cm]
\item[\textup{(1)}]$A_{\mathfrak{q}}(\lambda)$ contains the $K$-type $\mu(\mathfrak{q,\lambda})$.
\item[\textup{(2)}]$A_{\mathfrak{q}}(\lambda)$ has infinitesimal character $\lambda|_{\mathfrak{t}}+\rho$.
\end{enumerate}
\end{proposition}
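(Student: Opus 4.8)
The plan is to construct $A_{\mathfrak{q}}(\lambda)$ explicitly by cohomological induction and then read off its invariants. First I would form the one-dimensional $(\mathfrak{l}, L\cap K)$-module $\mathbb{C}_\lambda$ attached to the admissible character $\lambda$ and inflate it to a $(\mathfrak{q}, L\cap K)$-module by letting the nilradical $\mathfrak{u}$ act trivially. Applying the produce functor $\mathrm{pro}_{\mathfrak{q}}^{\mathfrak{g}}$ (with the standard $\rho(\mathfrak{u})$-shift) and then the Zuckerman derived functor $(\Gamma_{\mathfrak{g}, L\cap K}^{\mathfrak{g}, K})^j$, I would set $A_{\mathfrak{q}}(\lambda) := \mathcal{R}^{S}_{\mathfrak{q}}(\mathbb{C}_\lambda)$ in the middle degree $S := \dim(\mathfrak{u}\cap\mathfrak{k})$. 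This is the Vogan--Zuckerman recipe of \cite{VZ}, and the bulk of the work is to verify that this object has the two advertised properties and is the unique unitary module that does.

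Second, I would pin down the two invariants. The infinitesimal character in (2) is the mechanical part: by Harish-Chandra applied to $\mathfrak{l}$, the character $\mathbb{C}_\lambda$ has infinitesimal character $\lambda|_{\mathfrak{t}} + \rho(\mathfrak{l})$, and cohomological induction shifts the Harish-Chandra parameter by $\rho(\mathfrak{u})$; since $\rho = \rho(\mathfrak{l}) + \rho(\mathfrak{u})$, the result is exactly $\lambda|_{\mathfrak{t}} + \rho$. For (1), the decisive input is the bottom-layer $K$-type theorem: among the $K$-types of $A_{\mathfrak{q}}(\lambda)$, the representation $\mu(\mathfrak{q},\lambda)$ of highest weight $\lambda|_{\mathfrak{t}} + 2\rho(\mathfrak{u}\cap\mathfrak{p})$ occurs with multiplicity exactly one. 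I would obtain this from a Blattner-type analysis of the $K$-spectrum of $\mathcal{R}^{S}_{\mathfrak{q}}$, using that the admissibility condition $\langle\alpha, \lambda|_{\mathfrak{t}}\rangle \geq 0$ for $\alpha\in\Delta(\mathfrak{u})$ forces $\mu(\mathfrak{q},\lambda)$ to be the unique lowest $K$-type.

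Third come irreducibility and unitarity. Admissibility places $\lambda$ in the good (or at least weakly fair) range, and I would invoke the vanishing theorem for cohomological induction in this range: $\mathcal{R}^{j}_{\mathfrak{q}}$ vanishes for $j\neq S$, and $\mathcal{R}^{S}_{\mathfrak{q}}$ of an irreducible unitary character is irreducible. Unitarity then follows from the positivity of the induced invariant Hermitian form, i.e.\ from the Dirac-inequality/signature results guaranteeing that cohomological induction from a unitary character in the good range preserves unitarity. I expect this to be the main obstacle of the whole argument: the unitarity of derived-functor modules is genuinely deep and is the hard half of the Vogan--Zuckerman theorem, resting on a positivity statement rather than on any formal manipulation of the $\Gamma$-functors.

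Finally, for uniqueness I would argue that any irreducible unitary $(\mathfrak{g},K)$-module satisfying (1) and (2) coincides with $A_{\mathfrak{q}}(\lambda)$. Since $\mu(\mathfrak{q},\lambda)$ is the unique lowest $K$-type, Vogan's theory of minimal $K$-types shows that an irreducible admissible module is determined by a minimal $K$-type together with its infinitesimal character; two unitary modules sharing both data are therefore isomorphic. Equivalently, one identifies the underlying $(\mathfrak{g},K)$-module through its Langlands--Vogan parameter, which is recovered from exactly these invariants. Throughout I would lean on \cite{VZ} and \cite{BW} for the cohomological-induction machinery rather than redeveloping it.
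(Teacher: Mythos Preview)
The paper does not actually prove this proposition: it is stated as background from the Vogan--Zuckerman classification and is taken directly from \cite{VZ} (with \cite{BW} as a general reference for the cohomological machinery). Your outline is a faithful sketch of the standard cohomological-induction proof from \cite{VZ}, so there is nothing to compare against; in the context of the paper, a one-line citation to \cite{VZ} is all that is expected.

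One small caveat on your uniqueness step: the claim that an irreducible admissible module is determined by a minimal $K$-type together with its infinitesimal character is not literally true in general. What Vogan's theory gives is that the minimal $K$-type picks out a $\theta$-stable parabolic and a Langlands datum on the Levi, and then the infinitesimal character constrains the continuous parameter; for the specific $K$-type $\mu(\mathfrak{q},\lambda)$ in the admissible range this does suffice, but you should phrase it that way (as you partly do in your ``equivalently'' sentence) rather than as a blanket assertion.
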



\begin{remark}
    The $K$-representation $\mu(\mathfrak{q},\lambda)$ is minimal in the sense that all the $K$-types of $A_{\mathfrak{q}}(\lambda)$ are of the form
    $$\delta=\lambda|_{\mathfrak{t}}+2\rho(\mathfrak{u}\cap\mathfrak{p})+\sum\limits_{\beta\in \Delta(\mathfrak{u}\cap\mathfrak{p})}n_{\beta}\beta$$
    with $n_{\beta}$ non-negative integers.
\end{remark}

\begin{remark}
    
    We use the simplified symbols $\mu(\mathfrak{q})$, $A_{\mathfrak{q}}$ to denote $\mu(\mathfrak{q},0)$ and $A_{\mathfrak{q}}(0)$. It turns out that $\mu(\mathfrak{q})$ are actually $K$-representations inside the natural representation $\wedge^*\mathfrak{p}$ (\cite{VZ}).
\end{remark}

\begin{remark}
 Given a finite-dimensional representation $V$, the Zuckerman translation functor is a functor twisting the infinitesimal character of represntations. The representations $A_{\mathfrak{q}}(\lambda)$ are exactly the Zuckerman translations of the representations $A_{\mathfrak{q}}$.
\end{remark}

\begin{proposition}Let $(\pi,U)$ be an irreducible unitary $(\mathfrak{g},K)$-module and $V$ be a finite dimensional irreducible representation of $G$. Suppose $H^*(\mathfrak{g},K;U\otimes V)\neq 0$. Then there is a $\theta$-stable parabolic subalgebra $\mathfrak{q}=\mathfrak{l}\oplus\mathfrak{u}$ of $\mathfrak{g}$, such that
\begin{enumerate}[leftmargin=0.8cm]
\item[\textup{(1)}] $V/\mathfrak{u}V$ is a one-dimensional unitary representation of $L$; write $-\lambda:\mathfrak{l}\to \mathbb{C}$ for its differential.
\item[\textup{(2)}]$\pi\cong A_{\mathfrak{q}}(\lambda)$. Moreover, letting $R=\dim(\mathfrak{u}\cap\mathfrak{p})$, we have
$$H^*(\mathfrak{g},K;U\otimes V)=H^{*-R}(\mathfrak{l},L\cap K;\mathbb{C})=\Hom_{\mathfrak{l}\cap\mathfrak{k}}(\wedge^{*-R}(\mathfrak{l}\cap\mathfrak{p}),\mathbb{C}).$$
\end{enumerate}
\end{proposition}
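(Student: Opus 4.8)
The plan is to follow the strategy of Vogan and Zuckerman \cite{VZ}, reconstructing the parabolic $\mathfrak{q}$ from a distinguished $K$-type that the nonvanishing cohomology detects. By the preceding Lemma, the hypothesis $H^*(\mathfrak{g},K;U\otimes V)\neq 0$ forces $U$ and $V^*$ to share an infinitesimal character and makes the differential in the complex vanish, so that
$$H^q(\mathfrak{g},K;U\otimes V)=\Hom_K\bigl(\wedge^q(\mathfrak{g}/\mathfrak{k}),U\otimes V\bigr)=\Hom_K(\wedge^q\mathfrak{p},U\otimes V).$$
Hence some irreducible $K$-type $\mu$ must occur simultaneously in $U$ and in $\wedge^q\mathfrak{p}\otimes V^*$, and the whole problem becomes one of constraining and identifying this $\mu$.

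First I would pin down the highest weight of the common $K$-type. The key analytic input is the Parthasarathy-Dirac operator inequality together with Kumaresan's theorem \cite{VZ}: because $U$ is unitary with fixed infinitesimal character, the nonvanishing of $\Hom_K(\wedge^q\mathfrak{p},U\otimes V)$ forces the highest weight of $\mu$ to take the form $\lambda|_{\mathfrak{t}}+2\rho(\mathfrak{u}\cap\mathfrak{p})$ for a suitable $\theta$-stable parabolic $\mathfrak{q}=\mathfrak{l}\oplus\mathfrak{u}$ and a weight $\lambda$. Concretely, one reads off from a highest weight vector an element $X\in i\mathfrak{t}_0$ and defines $\mathfrak{q}$ as the centralizer $\mathfrak{l}$ of $X$ together with the sum $\mathfrak{u}$ of the positive eigenspaces of $\mathrm{ad}(X)$. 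This is the step I expect to be the main obstacle: the Dirac-inequality argument requires controlling the weight combinatorics tightly enough to guarantee that $\mu$ is exactly the minimal $K$-type $\mu(\mathfrak{q},\lambda)$ and nothing smaller, which is precisely where the unitarity of $U$ is indispensable.

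Next I would establish (1) and the identification $\pi\cong A_{\mathfrak{q}}(\lambda)$. With $\mathfrak{q}$ in hand, the infinitesimal-character match fixes $\lambda$ through the quotient $V/\mathfrak{u}V$; highest-weight theory for the finite-dimensional $V$ shows this quotient is one-dimensional over $\mathfrak{l}$ with differential $-\lambda$, and admissibility of $\lambda$ follows from the dominance of the highest weight of $V$. To prove (2) I would compare data against the uniqueness statement of the earlier Proposition: $A_{\mathfrak{q}}(\lambda)$ is the unique irreducible unitary $(\mathfrak{g},K)$-module containing the $K$-type $\mu(\mathfrak{q},\lambda)$ with infinitesimal character $\lambda|_{\mathfrak{t}}+\rho$. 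Since $U$ contains this very $K$-type and shares this infinitesimal character, we conclude $U\cong A_{\mathfrak{q}}(\lambda)$.

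Finally, for the cohomology formula I would specialize to $U=A_{\mathfrak{q}}(\lambda)$ and exploit the decomposition $\mathfrak{p}=(\mathfrak{u}\cap\mathfrak{p})\oplus(\mathfrak{l}\cap\mathfrak{p})\oplus(\bar{\mathfrak{u}}\cap\mathfrak{p})$ together with the known $K$-types of $A_{\mathfrak{q}}(\lambda)$ (all of the form $\mu(\mathfrak{q},\lambda)+\sum n_\beta\beta$ as in the Remark). A Koszul / Frobenius-reciprocity argument shows that the only contribution to $\Hom_K(\wedge^q\mathfrak{p},A_{\mathfrak{q}}(\lambda)\otimes V)$ comes from the top power $\wedge^R(\mathfrak{u}\cap\mathfrak{p})$ paired against the minimal $K$-type; this produces the dimension shift by $R=\dim(\mathfrak{u}\cap\mathfrak{p})$ and reduces the remaining factor to cohomology of $\mathfrak{l}$. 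The outcome is
$$H^*(\mathfrak{g},K;U\otimes V)\cong\Hom_{\mathfrak{l}\cap\mathfrak{k}}\bigl(\wedge^{*-R}(\mathfrak{l}\cap\mathfrak{p}),\mathbb{C}\bigr)=H^{*-R}(\mathfrak{l},L\cap K;\mathbb{C}),$$
which completes the proof.
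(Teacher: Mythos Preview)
The paper does not actually prove this proposition; it is stated as the Vogan--Zuckerman classification and simply cited from \cite{VZ}. Your proposal correctly outlines the Vogan--Zuckerman argument (Dirac/Parthasarathy inequality to constrain the $K$-type, construction of $\mathfrak{q}$ from the resulting weight, uniqueness via the minimal $K$-type characterization of $A_{\mathfrak{q}}(\lambda)$, and the Koszul-type reduction for the cohomology formula), so it is entirely consistent with what the paper invokes.
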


\subsubsection{A Hodge number formula}

When $G/K$ is Hermitian symmetric, the cohomology of a representation acquires a bigrading as follows. Write $\mathfrak{p}^+\subset \mathfrak{p}$ (resp. $\mathfrak{p}^-\subset \mathfrak{p}$) for the holomorphic (resp. anti-holomorphic) tangent space of $G/K$ at the origin. Then the decomposition $$\wedge^*\mathfrak{p}=\oplus (\wedge^*\mathfrak{p}^+)\otimes (\wedge^*\mathfrak{p}^-)$$
induces a bigrading on the complex $\Hom_{\mathfrak{p}}(\wedge^*\mathfrak{p},U\otimes V)$. For a cohomological representation, the differential map $d$ is zero, so this bigrading descends to a bigrading on the cohomological  spaces:
$$H^i(\mathfrak{g},K;U\otimes V)=\bigoplus\limits_{p+q=i}H^i(\mathfrak{g},K;U\otimes V)^{p,q}.$$
This decomposition corresponds to the Hodge structure on the cohomology of locally symmetric spaces, and the computation is given in \cite{VZ}. (That is, we are treating $V$ as a trivial HS of type $(0,0)$; this will be fixed in a moment.) The Hodge types are calculated in terms of the decomposition of the $\theta$-stable parabolic subalgebra $\mathfrak{q}$. We denote the Hodge type given in \cite{VZ} by $(p_U,q_U)$.


Now if $V$ is a representation of real type of the group $G$, and the associated local system $\V$ is a VHS of weight $w$, then we need to shift the Hodge numbers to make $H^i(\mathfrak{g},K;U\otimes V)$ a HS of weight $i+w$, viz.
$$H^i(\mathfrak{g},K;U\otimes V)=\bigoplus\limits_{p+q=i+w}H^i(\mathfrak{g},K;U\otimes V)^{p,q}.$$
A class on the left-hand side can be written as $\sum_j \omega_j\otimes u_j\otimes v_j$ where $\omega_j\in (\wedge^*\mathfrak{p})^{\vee}$ is a ``differential form'' of type $(p_U,q_U)$, while $v_j\in V$ and $u_j\in U$. From the construction of the representations $A_{\mathfrak{q}}$ and the computation of their cohomology, all the vectors $v_j$ are in the same $K$-sub-representation of $V$, hence have the same Hodge type $(p'_U,q'_U)$.  So the Hodge type $(p,q)$ for such a class is given by $(p_U+p'_U,q_U+q'_U)$. 

Taking the sum of all possible representations, we get the following Hodge number formula for local systems.

\begin{proposition}\label{p3.4}
  Set $h_i^{p,q}(\pi,V):=\dim(H_{(2)}^i(X^*,\mathbb{V})^{p,q})$, then 
   $$h_i^{p,q}(\pi,V)=\sum \textrm{mult.}(\Gamma,U_{\pi})\dim(H^i(\mathfrak{g},K;U_{\pi}\otimes V))$$
   where $U_{\pi}$ sums over cohomological representations with nonzero $H^i$ and such that $(p_U+p'_U,q_U+q'_U)=(p,q)$.
\end{proposition}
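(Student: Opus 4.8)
The plan is to assemble the inputs already in place above --- the identification of $L^2$-cohomology with relative Lie algebra cohomology, the spectral decomposition of $\mathcal{A}^2(G;\Gamma)$, and the Vogan--Zuckerman Hodge-type data --- and then sort the resulting summands by bidegree, taking dimensions at the end to produce the displayed formula.

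First I would start from the isomorphism $H^i_{(2)}(X^*,\mathbb{V})\cong H^i(\mathfrak{g},K;\mathcal{A}^2(G;\Gamma)\otimes V)$ established above and substitute the decomposition $\mathcal{A}^2(G;\Gamma)\cong\bigoplus_\pi m_\pi(\Gamma)\,U_\pi$ into irreducible unitary $(\mathfrak{g},K)$-modules of finite multiplicity. The essential reduction is that this Hilbert direct sum is \emph{effectively finite} for the purpose of computing cohomology: a summand $U_\pi$ contributes to $H^*(\mathfrak{g},K;U_\pi\otimes V)$ only if its infinitesimal character equals that of $V^*$ (the infinitesimal-character lemma above), and among the discrete-spectrum constituents with $m_\pi(\Gamma)>0$ only finitely many isomorphism classes meet this constraint. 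Since relative Lie algebra cohomology commutes with finite direct sums, I obtain $H^i_{(2)}(X^*,\mathbb{V})=\bigoplus_\pi m_\pi(\Gamma)\,H^i(\mathfrak{g},K;U_\pi\otimes V)$.

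Next I would refine this to bidegrees. On each cohomological summand the differential vanishes, so the bigrading coming from $\wedge^*\mathfrak{p}=\bigoplus(\wedge^*\mathfrak{p}^+)\otimes(\wedge^*\mathfrak{p}^-)$ descends to $H^i(\mathfrak{g},K;U_\pi\otimes V)$. A class is a sum $\sum_j\omega_j\otimes u_j\otimes v_j$ in which each $\omega_j$ carries the differential-form type $(p_U,q_U)$ furnished by Vogan--Zuckerman, while each $v_j$ lies in a single $K$-subrepresentation of $V$ of one fixed Hodge type $(p'_U,q'_U)$; hence the entire cohomology of $U_\pi$ occupies the single bidegree $(p_U+p'_U,\,q_U+q'_U)$. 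Grouping the summands according to this bidegree and taking dimensions then gives $h_i^{p,q}(\pi,V)$ as the asserted sum over those $U_\pi$ with nonzero $H^i$ and $(p_U+p'_U,q_U+q'_U)=(p,q)$.

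The hard part will be the two compatibility checks underlying this bookkeeping. The first is the legitimacy of passing $(\mathfrak{g},K)$-cohomology through the completed automorphic decomposition; here the infinitesimal-character finiteness is exactly what licenses replacing the Hilbert sum by a finite algebraic one, so this is where I would spend the most care. The second is the claim that all the $V$-vectors occurring in a harmonic representative share one Hodge type $(p'_U,q'_U)$: this rests on the minimal-$K$-type structure of $A_{\mathfrak{q}}(\lambda)$ and the explicit form of its cohomology recalled above, and it is what makes the additivity $(p,q)=(p_U+p'_U,q_U+q'_U)$ hold on the nose rather than merely up to total degree.
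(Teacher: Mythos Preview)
Your proposal is correct and follows essentially the same approach as the paper. Proposition~\ref{p3.4} is stated in the paper as a summary of the discussion immediately preceding it (together with the $L^2$-cohomology identification from \S2.3), and your sketch recapitulates precisely those ingredients: the spectral decomposition of $\mathcal{A}^2(G;\Gamma)$, the infinitesimal-character constraint reducing to finitely many summands, the bigrading from $\wedge^*\mathfrak{p}=\bigoplus(\wedge^*\mathfrak{p}^+)\otimes(\wedge^*\mathfrak{p}^-)$, and the observation that the $V$-vectors $v_j$ in a harmonic representative lie in a single $K$-type and hence carry a single Hodge type $(p'_U,q'_U)$.
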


Now if $V$ is a representation of complex or quaternionic type of the group $G$, and the associated local system $\V$ is a VHS.  Then $V_{\C}=V_{\mu}\oplus V_{\mu^*}$, and the computation of the Hodge types is the same as in the real case, but it is the direct sum of two representations. 
\begin{proposition}
Set $h_i^{p,q}(\pi,V):=\dim(H_{(2)}^i(X^*,\mathbb{V})^{p,q})$; then 
\begin{align*}
h_i^{p,q}(\pi,V)=&\sum \textrm{mult.}(\Gamma,U_{\pi})\dim(H^i(\mathfrak{g},K;U_{\pi}\otimes V_{\mu}))\\ &+\sum \textrm{mult.}(\Gamma,U^*_{\pi})\dim(H^i(\mathfrak{g},K;U^*_{\pi}\otimes V_{\mu^*}))
\end{align*}
   where $U_{\pi}$ (resp. $U^*_{\pi}$) sums over cohomological representations with respect to $V$ (resp. $V^*$) with nonzero $H^i$ and such that $(p_U+p'_U,q_U+q'_U)=(p,q)$ (resp. $(p_{U^*}+p'_{U^*},q_{U^*}+q'_{U^*})=(p,q)$). 
\end{proposition}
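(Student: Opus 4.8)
The plan is to run exactly the argument of Proposition~\ref{p3.4}, but after complexifying the coefficient system and exploiting the splitting $V_{\C}=V_{\mu}\oplus V_{\mu^*}$ that is characteristic of the complex and quaternionic types. Since the $\R$-Hodge structure on $H^i_{(2)}(X^*,\mathbb{V})$ is read off from its complexification, Zucker's conjecture together with the $L^2$-description $H^*_{(2)}(X,\mathbb{V})=H^*(\mathfrak{g},K,\mathcal{A}^2(G;\Gamma)\otimes V)$ of \S2.3 gives
$$h_i^{p,q}(\pi,V)=\dim_{\C}\bigl(H^i(\mathfrak{g},K;\mathcal{A}^2(G;\Gamma)\otimes V_{\C})\bigr)^{p,q}.$$
Everything then comes down to tracking this $(p,q)$-piece through two reductions.

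First I would split along the coefficients. As $V_{\C}=V_{\mu}\oplus V_{\mu^*}$ is a $G$-stable, hence $(\mathfrak{g},K)$-equivariant, decomposition, it induces
$$H^i(\mathfrak{g},K;\mathcal{A}^2\otimes V_{\C})=H^i(\mathfrak{g},K;\mathcal{A}^2\otimes V_{\mu})\oplus H^i(\mathfrak{g},K;\mathcal{A}^2\otimes V_{\mu^*}).$$
Decomposing $\mathcal{A}^2(G;\Gamma)=\bigoplus_{\pi}m_{\pi}(\Gamma)\,U_{\pi}$ into irreducibles turns the first summand into $\bigoplus_{\pi}m_{\pi}(\Gamma)\,H^i(\mathfrak{g},K;U_{\pi}\otimes V_{\mu})$, and similarly for $V_{\mu^*}$. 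By the infinitesimal-character lemma, only those $U_{\pi}$ whose infinitesimal character equals that of $V_{\mu}^*=V_{\mu^*}$ can contribute to the first summand; these are precisely the $U_{\pi}$ cohomological with respect to $V$. Dually, a module is cohomological for $V_{\mu}$ if and only if its contragredient is cohomological for $V_{\mu}^*=V_{\mu^*}$, so the contributors to the second summand are exactly the $U^*_{\pi}$ cohomological with respect to $V^*$, occurring with multiplicity $\mathrm{mult.}(\Gamma,U^*_{\pi})$. Finally I would impose the Hodge type by applying the Vogan--Zuckerman bigrading recipe recalled before Proposition~\ref{p3.4} to each summand separately: a nonzero class decomposes as $\sum_j \omega_j\otimes u_j\otimes v_j$ with $\omega_j\in(\wedge^*\mathfrak{p})^{\vee}$ of type $(p_U,q_U)$ and all $v_j$ in a single $K$-subrepresentation of $V_{\mu}$ (resp.\ $V_{\mu^*}$) of fixed Hodge type $(p'_U,q'_U)$, so the class is of type $(p_U+p'_U,q_U+q'_U)$. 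Collecting the summands of total type $(p,q)$ and taking dimensions produces the two sums in the statement.

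The step that requires genuine care---and that is only asserted in the real case---is the verification that this combinatorially defined bigrading really is the $\R$-Hodge decomposition, equivalently that complex conjugation interchanges the two summands with Hodge types transposed. This rests on the fact that $V_{\mu}$ and $V_{\mu^*}$ underlie conjugate complex variations, $\overline{V_{\mu}^{a,b}}=V_{\mu^*}^{b,a}$, together with the stability of the discrete $L^2$-spectrum under complex conjugation, which gives $m_{\pi}(\Gamma)=m_{\pi^*}(\Gamma)$. Granting these, conjugation carries the $(p,q)$-part of the $V_{\mu}$-summand to the $(q,p)$-part of the $V_{\mu^*}$-summand, so the total decomposition is a bona fide $\R$-Hodge structure and the resulting Hodge numbers are symmetric in $(p,q)$. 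Beyond this reality check, the only new bookkeeping relative to Proposition~\ref{p3.4} is keeping the conjugate pieces $V_{\mu}$ and $V_{\mu^*}$ separate while summing their Hodge-type contributions; no representation-theoretic input beyond the infinitesimal-character lemma and the Vogan--Zuckerman computation is needed.
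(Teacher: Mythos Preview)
Your proposal is correct and follows essentially the same approach as the paper: the paper's ``proof'' is the single sentence preceding the statement, namely that since $V_{\C}=V_{\mu}\oplus V_{\mu^*}$ the computation of the Hodge types is the same as in the real case (Proposition~\ref{p3.4}) but carried out on each summand separately. Your write-up expands this into the natural argument---split along $V_{\mu}\oplus V_{\mu^*}$, decompose $\mathcal{A}^2$ into irreducibles, apply the infinitesimal-character lemma and the Vogan--Zuckerman bigrading to each piece---and even adds the reality check via complex conjugation that the paper leaves implicit.
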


If $\V$ is not a VHS, we could still make a rational half-twist to get a VHS. Let $c$ be the shifting constant. We could pretend that the vector has a rational Hodge type $(p'_U,q'_U)$, and then add the twisting constant $c$.

\begin{proposition}
Set $h_i^{p,q}(\pi,V):=\dim(H_{(2)}^i(X^*,\mathbb{V})^{p,q})$; then 
\begin{align*}
h_i^{p,q}(\pi,V)=&\sum \textrm{mult.}(\Gamma,U_{\pi})\dim(H^i(\mathfrak{g},K;U_{\pi}\otimes V_{\mu}))\\ &+\sum \textrm{mult.}(\Gamma,U^*_{\pi})\dim(H^i(\mathfrak{g},K;U^*_{\pi}\otimes V_{\mu^*}))
\end{align*}
where $U_{\pi}$ (resp. $U^*_{\pi}$) sums over cohomological representations with respect to $V$ (resp. $V^*$) with nonzero $H^i$ and such that $(p_U+p'_U+c,q_U+q'_U+c)=(p,q)$ (resp. $(p_{U^*}+p'_{U^*}-c,q_{U^*}+q'_{U^*}-c)=(p,q)$). 
\end{proposition}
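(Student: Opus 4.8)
The plan is to reduce this to the complex/quaternionic VHS case of the preceding (complex-type) Proposition by performing the rational twist described above. Since $\mathbb{V}$ is not itself a VHS, the grading element $E$ acts on $V_{\mathbb{C}}$ with rational eigenvalues that fail to be half-integral. Following the twist construction, I would enlarge the Mumford--Tate group to $U(1)\cdot G$, with complexified Lie algebra $\mathbb{C}\oplus\mathfrak{g}$, replace $E$ by $\tilde{E}=(1,E)$, and form the twist $V\{c\}$ with $c$ the chosen shifting constant. Because $(V\{c\})_k=V_{k+c}$, this shifts every $E$-eigenvalue by $c$ and renders them half-integral, so that $V\{c\}$ underlies a genuine VHS to which the complex-type Proposition applies.

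The key observation is that this twist is effected by the \emph{central} $U(1)$-factor and therefore changes only the Hodge grading, leaving the underlying $(\mathfrak{g},K)$-module unchanged. In particular $V\{c\}$ and $V$ restrict to the same $\mathfrak{g}$-representation, so each cohomology space $H^i(\mathfrak{g},K;U_{\pi}\otimes V_{\mu})$, together with its dimension, is identical to its untwisted counterpart. Consequently the summands $\mathrm{mult.}(\Gamma,U_{\pi})\dim H^i(\mathfrak{g},K;U_{\pi}\otimes V_{\mu})$ are exactly those appearing in the complex-type Proposition; only the assignment of Hodge types is altered.

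It remains to track how the twist acts on the two halves $V_{\mathbb{C}}=V_{\mu}\oplus V_{\mu^*}$. By the conjugation relation $\overline{V_+^{p,q}}=V_-^{q,p}$ recalled above, the real twist $\tilde{V}\{c\}_{\mathbb{R}}$ is the real form underlying $V_+\{c\}\oplus V_-\{-c\}$; thus the twist shifts the Hodge type of a vector in $V_{\mu}=V_+$ by $(+c,+c)$ and that of a vector in $V_{\mu^*}=V_-$ by $(-c,-c)$. Proceeding exactly as in the proof of the complex-type Proposition, a cohomology class is $\sum_j\omega_j\otimes u_j\otimes v_j$ where $\omega_j$ carries the Vogan--Zuckerman form type $(p_U,q_U)$ and the $v_j$ lie in a single $K$-subrepresentation of (now merely rational) Hodge type $(p'_U,q'_U)$; after the twist this $V_{\mu}$-contribution acquires total type $(p_U+p'_U+c,\,q_U+q'_U+c)$, while a $V_{\mu^*}$-contribution, coming from a representation $U^*_{\pi}$ cohomological with respect to $V^*$, acquires type $(p_{U^*}+p'_{U^*}-c,\,q_{U^*}+q'_{U^*}-c)$. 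Imposing equality of each with $(p,q)$ and summing over all contributing representations gives the stated formula.

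The main point to verify carefully is the twist-invariance of the cohomology dimensions asserted in the second paragraph: one must confirm that passing from $\mathfrak{g}$ to $\mathbb{C}\oplus\mathfrak{g}$ and twisting by the central character does not disturb the $(\mathfrak{g},K)$-cohomology, so that the numerical data are genuinely inherited from the complex-type case and only the Hodge bookkeeping changes. Granting this, the simultaneous half-integrality of the eigenvalues on $V_+\{c\}$ and $V_-\{-c\}$---needed so that both summands underlie honest Hodge structures---follows because the two eigenvalue sets are negatives of one another, and the remainder is the same computation as before.
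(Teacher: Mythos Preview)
Your proposal is correct and follows essentially the same approach as the paper.  The paper does not give a formal proof here: the entire argument is the sentence preceding the proposition (``we could pretend that the vector has a rational Hodge type $(p'_U,q'_U)$, and then add the twisting constant $c$''), which simply asserts that one reduces to the complex-type proposition by the rational twist.  Your write-up makes this explicit---in particular, your observation that the twist is by the central $U(1)$-factor and hence leaves the underlying $(\mathfrak{g},K)$-module and its cohomology unchanged is exactly the justification the paper leaves implicit, and your tracking of the $\pm c$ shifts on $V_+$ versus $V_-$ matches the paper's convention for $\tilde{V}\{c\}_{\mathbb{R}}$.
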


\begin{remark}
In practice, the group $G$ could be reductive. Since the cohomology of local systems is not changed if we study the semisimple subgroup. We could calculate everything in the semisimple setting. Just note that, to calculate the Hodge type of the vector, $v$ should be considered as a representation of $G$. 
\end{remark}

\begin{remark}
It seems that it is better to study packets of representations if representations in the same packet occurs with the same multiplicity, and have `conjugate' Hodge numbers. 
\end{remark}

\subsection{Some computations}

\subsubsection{$SU(2,1)$}

As a Lie group, $SU(n,1)$ is the group of matrices $M$ preserving the sesquilinear form defined by $\mathrm{diag}(I_n,-1)$: $M\mathrm{diag}(I_n,-1)\overline{M}^t=\mathrm{diag}(I_n,-1)$. The maximal torus of $SU(n,1)$ is the two-dimensional torus consisting of diagonal matrices \{$\mathrm{diag}(e^{2\pi i\theta_1}, e^{2\pi i\theta_2},\cdots, e^{2\pi i\theta_n}),\sum \theta_i\equiv 0\mod 2\pi$\}. The maximal compact subgroup is isomorphic to $U(n)$, consisting of matrices $\mathrm{diag}(A,\det(A)^{-1})$ where $A$ is in $U(n)$.

The natural $K$-representation $\wedge^*\mathfrak{p}$ has a simple description \cite{BW}. Let $\tau$ be the standard representation of $U(n)$ on $\C^n$. Set $\tau_1=(\det)\otimes \tau$. Then as a $K$-representation, $\mathfrak{p}\cong \tau_1\oplus \tau_1^*$. Wedge powers of $\tau_1$ and $\tau^*_1$ are irreducible. And 
$$\wedge^q\mathfrak{p}=\bigoplus\limits_{r+s=q}\Lambda^{r,s}$$ with $K$ acting on $\Lambda^{r,s}$ by $\wedge^r\tau_1\otimes\wedge^s\tau_1^*$. Note that $\Lambda^{r,s}$ are not necessarily irreducible.

Irreducible representations of $U(2)$ are finite-dimensional. We have a surjective product map from $SU(2)\times U(1)$ to $U(2)$ with kernel $\{\pm 1\}$. The tensor product of $k$-th symmetric power ($k\in\mathbb{N}$) of the standard representation of $SU(2)$ and $l$-th character ($l\in\mathbb{Z}$) of $U(1)$ descends to an irreducible representation of $U(2)$ if and only if $k\equiv l\mod 2$. Therefore, the irreducible representations of $U(2)$ are parameterized by the pairs $(k,l)$ with $k\in\mathbb{N},l\in\mathbb{Z}$ and $k\equiv l\mod 2$. The representation $\tau_1$ is $(1,1)$, and $\tau_1^*$ is $(1,-1)$.

The complexified Lie algebra $\mathfrak{su}(2,1)$ is isomorphic to $\mathfrak{sl}(3)$. Following notations in \cite{GGK2}, its simple roots are $\alpha_1=e_2^*-e_1^*$ and $\alpha_2=e_3^*-e_2^*$. The root $\alpha_1$ is a compact root. From Weyl's unitary trick, finite-dimensional representations of $SU(2,1)$ are the same as finite-dimensional representations of $\mathfrak{su}(2,1)\cong\mathfrak{sl}(3)$. There are two fundamental representations corresponding to the two dominant weights: $\lambda_1=\frac{1}{3}(\alpha_1+2\alpha_2)$ corresponds to the standard representation $W$, and $\lambda_2=\frac{1}{3}(2\alpha_1+\alpha_2)$ corresponds to the dual $W^*\cong \wedge^2W$. Other representations are subrepresentations of tensor powers of $W$ and $W^*$. 

\vspace{0.5cm}

Now we list cohomological representations. Let $V$ be a finite-dimensional representation of $SU(2,1)$.

We first consider the case when $V$ is the trivial representation. We have three types of cohomological representations (\cite{BW}):
\begin{itemize}[leftmargin=0.4cm]
\item The three discrete series representations with the same infinitesimal characters as the trivial representation. One is the holomorphic discrete series $D^+$, with minimal $K$-type $(0,2)$; one is the anti-holomorphic discrete series $D^-$, with minimal $K$-type $(0,-2)$; and one is the non-holomorphic discrete series $D^0$; with minimal $K$-type $(2,0)$. The only nonvanishing cohomology is $H^2(\mathfrak{g},K,U)=\mathbb{C}$. 
\item The two non-tempered representations $J_{1,0}$ and $J_{0,1}$. The minimal $K$-type of $J_{1,0}$ is $(1,1)$, and the minimal $K$-type of $J_{0,1}$ is $(1,-1)$. The nontrivial cohomology is $H^1(\mathfrak{g},K,U)=H^3(\mathfrak{g},K,U)=\mathbb{C}$. 
\item The trivial representation. The minimal $K$-type is $(0,0)$. The cohomology groups are those coming from the base varieties.
\end{itemize}

If $V$ is regular, the only cohomological representations are discrete series representations. 

If $V=V^{\lambda}$ is singular, then $\lambda=n\lambda_1$ or $n\lambda_2$. In this case, besides discrete series we have some nontempered cohomological representations. If $\lambda=n\lambda_1$ (resp. $n\lambda_2$), a twist of $J_{1,0}$ (resp. $J_{0,1}$) is also cohomological. We call it $J^n_{1,0}$ (resp. $J^n_{0,1}$).  In order to list the corresponding Hodge types for $V=V^{n\lambda_1}\subseteq (V^{\lambda_1})^{\otimes n}$, we have to fix the Hodge structure on $V^{\lambda_1}$ as in Example \ref{ex2.5}.  First, we take the Hodge type of $V^{\lambda_1}$ to be twisted so that $h^{1,0}=1$ and $h^{0,1}=2$, so that $\mathcal{H}^1$ of the universal Picard curve takes the form $V^{\lambda_1}\oplus V^{\lambda_2}$ (where $V^{\lambda_2}=\overline{V^{\lambda_1}}$).  Then the following Hodge structures appear:
\begin{itemize}[leftmargin=0.4cm]
    \item $U=$ discrete series.  The only non-vanishing cohomology is $H^2(\mathfrak{g},K;U\otimes V^{n\lambda_1})$. The Hodge types are $(p,q)=$ $(n+2,0)$,$(n+1,1)$, and $(n,2)$.
    \item $U=$ the representation $J^n_{1,0}$. The non-vanishing cohomologies are $H^1$ and $H^3$. Their Hodge types are $(n+1,0)$, and $(n+2,1)$.
\end{itemize}

\noindent If $V^{\lambda_1}$ is twisted as for the local systems associated with $K3$ surfaces (resp. Calabi-Yau manifolds), just add $(1,0)$ (resp. $(2,0)$) to the Hodge numbers.  Finally, in \emph{all} cases, viewing $V^{n\lambda_2}$ as $\overline{V^{n\lambda_1}}$ as a Hodge representation, apply complex conjugation $(a,b)\mapsto (b,a)$ to the Hodge types to get the correct ones for $V^{n\lambda_2}$. In summary:

\begin{proposition}
In these three geometric cases, we do not get Hodge classes in the cohomology of local systems of Picard curves or Calabi-Yau threefolds.  In the case of $K3$ surfaces, we get Hodge classes of type $(2,2)$ in $H^2(\mathfrak{g},K;U\otimes V^{\lambda_1})$; similarly, for the universal abelian surfaces ($=$Jacobians of Picard curves) over the 2-ball, we get Hodge classes of type $(2,2)$ in $H^2(\mathfrak{g},K;U\otimes V^{2\lambda_1})$.
\end{proposition}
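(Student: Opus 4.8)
The plan is to read the statement off as a finite bookkeeping problem, feeding the Vogan--Zuckerman classification into the Hodge-number formula of Proposition \ref{p3.4}. Each of the four geometric situations attaches a specific weight-$w$ Hodge structure to a single irreducible local system $V^{n\lambda_1}$ (together with its conjugate $V^{n\lambda_2}$): here $w=1,2,3$ with $n=1$ for Picard curves, $K3$ surfaces and Calabi--Yau threefolds, and $w=2$ with $n=2$ for the abelian surfaces. I would therefore compute \emph{all} Hodge types $(p,q)$ occurring in $H^\bullet(\mathfrak{g},K;U\otimes V)$ and check in each case whether any satisfies $p=q$.

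First I would fix the list of cohomological representations. Since $V^{n\lambda_1}$ and $V^{n\lambda_2}$ have singular infinitesimal character of the form $n\lambda_i$, the classification recalled just before the statement shows that the only $U$ with $H^\bullet(\mathfrak{g},K;U\otimes V)\neq 0$ are the discrete series (nonzero only in degree $H^2$) together with the non-tempered representations $J^n_{1,0}$ for $V^{n\lambda_1}$ and $J^n_{0,1}$ for $V^{n\lambda_2}$ (nonzero in degrees $H^1$ and $H^3$). In the Picard normalization ($h^{1,0}=1,\,h^{0,1}=2$ on $V^{\lambda_1}$) the discrete-series classes in $H^2(\mathfrak{g},K;U\otimes V^{n\lambda_1})$ carry types $(n+2,0),(n+1,1),(n,2)$, while $J^n_{1,0}$ contributes $(n+1,0)$ in $H^1$ and $(n+2,1)$ in $H^3$; the $V^{n\lambda_2}$-contributions are the complex conjugates. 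Passing to the $K3$ (resp. Calabi--Yau) normalization via Example \ref{ex2.5} amounts to one (resp. two) half-twists, adding $(1,0)$ (resp. $(2,0)$) to every $V^{n\lambda_1}$-type and $(0,1)$ (resp. $(0,2)$) to every $V^{n\lambda_2}$-type.

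I would then scan each list for a class with $p=q$. For Picard curves ($n=1,w=1$) the types $(3,0),(2,1),(1,2)$ and $(2,0),(3,1)$ (with their conjugates) contain no $(p,p)$; likewise in the Calabi--Yau normalization, where the triple becomes $(5,0),(4,1),(3,2)$ and $J^1_{1,0}$ gives $(4,0),(5,1)$, so the odd total weight kills $H^2$ outright and the explicit types rule out $H^1,H^3$. By contrast, in the $K3$ normalization the discrete-series triple becomes $(4,0),(3,1),(2,2)$, producing the asserted Hodge class of type $(2,2)$ in $H^2(\mathfrak{g},K;U\otimes V^{\lambda_1})$ (and again from $V^{\lambda_2}$, as $(0,4),(1,3),(2,2)$), while $J^1_{1,0}$ only yields $(3,0),(4,1)$. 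For the abelian surfaces ($n=2,w=2$ in the Picard normalization) the discrete-series triple is again $(4,0),(3,1),(2,2)$, giving a $(2,2)$-class in $H^2(\mathfrak{g},K;U\otimes V^{2\lambda_1})$, whereas $J^2_{1,0}$ contributes only $(3,0),(4,1)$.

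The one genuinely delicate point -- the part I would watch most closely -- is the bookkeeping of the twists together with the $V^{n\lambda_1}$/$V^{n\lambda_2}$ split. One must verify both that the discrete-series contribution really lands on $(p,p)$ after the half-twist \emph{and} that the non-tempered $J^n$-classes, whose Hodge types are asymmetric in $p$ and $q$, never accidentally produce a Hodge class in the even-weight degrees $H^1,H^3$. Once these tables are confirmed, the statement is a direct reading of Proposition \ref{p3.4}.
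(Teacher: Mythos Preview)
Your proposal is correct and follows exactly the paper's approach: the proposition is stated in the paper as a summary (``In summary:'') of the Hodge-type tables computed just before it, and your proof is precisely that bookkeeping---listing the discrete-series and $J^n$ contributions for $V^{n\lambda_1}$ (and their conjugates for $V^{n\lambda_2}$), applying the appropriate half-twist shifts from Example~\ref{ex2.5}, and scanning for $(p,p)$-types.
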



\subsubsection{$SU(3,1)$}

The representations of $U(3)$ are similar to the case of $U(2)$. We consider the surjection $SU(3)\times U(1)\to U(3)$ with kernel $A=\langle \omega\rangle\cong\mathbb{Z}/3\mathbb{Z}$ where $\omega$ is a primitive third root of unity. Any irreducible representation $(\pi,V)$ of $SU(3)$ defines, by restriction, a character $\chi_{\pi}$ of $A$, which must be $\omega\mapsto \omega^{i(\pi)}(i(\pi)=0,1,2)$. A pair $(\pi,n)$ with $n\in\mathbb{Z}$ and $n\equiv i(\pi)\mod 3$ defines an irreducible representation of $U(3)$. And all irreducible representations of $U(3)$ are of this form. The representation $\tau_1$ is $(\mathrm{st},1)$ and $\tau_1^*$ is $(\mathrm{st}^*,-1)$.

We have three fundamental representations. The standard representation $V^{\lambda_1}=W$, the wedge product $V^{\lambda_2}=\wedge^2W$, and $V^{\lambda_3}=\wedge^3W=W^*$. All other finite-dimensional representations are subrepresentations of tensor products of fundamental representations.


When $V$ is the trivial representation, the cohomological representations $U$ are:

\begin{itemize}[leftmargin=0.4cm]
\item four discrete series $D_i$. The only non-vanishing cohomology is $H^3(\mathfrak{g},K,U)=\mathbb{C}$. Their Hodge types are $(3,0)$, $(2,1)$, $(1,2)$, and $(0,3)$.
\item non-tempered representations. 
\begin{itemize}[leftmargin=0.6cm]
    \item $J_{0,0}=\mathbb{C}$,
    \item $J_{1,0}$ and $J_{0,1}$, whose only nonvanishing cohomology groups are $H^1(\mathfrak{g},K,U)=H^3(\mathfrak{g},K,U)=H^5(\mathfrak{g},K,U)=\mathbb{C}$. Their Hodge types are $(1,0)$ ($(2,1)$, and $(3,2)$) and $(0,1)$ ($(1,2)$, and $(2,3)$).
    \item $J_{2,0}$, $J_{1,1}$, $J_{0,2}$. The nonvanishing cohomology groups are $H^2(\mathfrak{g},K,U)=H^4(\mathfrak{g},K,U)=\mathbb{C}$. Their Hodge types are $(2,0)$, $(1,1)$, $(0,2)$ in degree two and $(3,1)$, $(2,2)$, $(1,3)$ in degree four.
\end{itemize}
\end{itemize}

For regular representations, only discrete series are cohomological. For singular representations, the twists of non-tempered representations are again cohomological, but we need to compute their minimal $K$-type to determine which representations are cohomological with respect to a finite-dimensional representation.  The next proposition treats the most singular cases:
\begin{itemize}[leftmargin=0.4cm]
\item If $\lambda=n\lambda_1$, a twist $J^{n}_{1,0}$ or $J^{n}_{2,0}$ is also cohomological;
\item If $\lambda=n\lambda_2$, a twist of $J^{n}_{1,1}$ is also cohomological;
\item If $\lambda=n\lambda_3$, a twist of $J^{n}_{0,1}$ or $J^{n}_{0,2}$ is also cohomological.
\end{itemize}

\begin{proposition}\label{p3.9}
For these choices of $\lambda$, with Hodge types as in tensor powers of $\mathcal{H}^1$ of Picard curves, the only case to get Hodge classes in $H^*(\mathfrak{g},K,U\otimes V^{\lambda})$ is when $\lambda =n\lambda_2$ and $U$ is a twisting of $J_{1,1}$.
\end{proposition}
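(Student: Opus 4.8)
The plan is to turn the existence of a Hodge class into a single numerical identity and then run through the three families $\lambda\in\{n\lambda_1,n\lambda_2,n\lambda_3\}$. By the twisted Hodge number formula above, a cohomology class assembled from a Vogan--Zuckerman form of type $(p_U,q_U)$ and a coefficient vector $v\in V^{\lambda}$ of Hodge type $(p'_U,q'_U)$ lies in Hodge type $(p_U+p'_U+c,\,q_U+q'_U+c)$. The twist $c$ is added to both slots, so it is invisible to the condition $p=q$, which therefore reads
$$(p_U-q_U)+(p'_U-q'_U)=0.$$
Thus it suffices to compute the two differences separately: a \emph{form} part depending only on the $\theta$-stable parabolic attached to $U$, and a \emph{coefficient} part depending only on $V^{\lambda}$.

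For the form part the bigrading is unchanged under the Zuckerman translation $J\mapsto J^{n}$, so I read it off the trivial-coefficient list above: $p_U-q_U$ equals $+1$ for $J_{1,0}$, $+2$ for $J_{2,0}$, $0$ for $J_{1,1}$, and $-1,-2$ for the duals $J_{0,1},J_{0,2}$. For the coefficient part I fix the Picard Hodge structure on the standard representation $W=V^{\lambda_1}$ with $h^{1,0}=1$ and $h^{0,1}=3$ (the embedding $\iota\colon\mathbb{F}\hookrightarrow\mathbb{C}$ matching the $U(2,1)$ weight $(1,2)$ of Example~\ref{ex2.5}), so that the grading element has a single holomorphic eigenline. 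The coefficient vector entering the cohomology is the generator of the one-dimensional quotient $V^{\lambda}/\mathfrak{u}V^{\lambda}$ of the Vogan--Zuckerman classification, i.e.\ the highest-weight vector of $V^{n\lambda_i}$ for the positive system cut out by $\mathfrak{q}$. Computing its Hodge type from the grading element gives
$$V^{n\lambda_1}\colon (n,0),\qquad V^{n\lambda_2}\colon (n,n),\qquad V^{n\lambda_3}\colon (n,2n),$$
so that $p'_U-q'_U=n,\,0,\,-n$ respectively; the middle value is balanced precisely because the top weight $n\lambda_2$ pairs the unique holomorphic direction with an anti-holomorphic one, whereas $n\lambda_1$ is purely holomorphic.

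Combining the two parts, the criterion $(p_U-q_U)+(p'_U-q'_U)=0$ becomes $n+1$ or $n+2$ when $\lambda=n\lambda_1$ (resp.\ $-(n+1)$ or $-(n+2)$ when $\lambda=n\lambda_3$), neither of which can vanish; the surviving classes have types $(n+1,\ast),(n+2,\ast),(n+3,\ast)$ with $p>q$ (resp.\ $p<q$), so no Hodge classes occur. When $\lambda=n\lambda_2$ and $U$ is a twist of $J_{1,1}$ the criterion reads $0+0=0$, and the nonzero groups $H^2$ and $H^4$ carry classes of type $(n+1,n+1)$ and $(n+2,n+2)$ --- genuine Hodge classes. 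This proves the proposition. The step I expect to be the crux is the identification of the coefficient vector in the second paragraph: one must make the parabolic $\mathfrak{q}$ for each non-tempered $A_{\mathfrak{q}}$ explicit and check that its positive system is compatible with the grading element, so that the extremal vector is the highest- and not the lowest-weight one. This is not a formality, since the lowest-weight alternative would flip the sign of the coefficient contribution and would spuriously satisfy $p=q$ (e.g.\ already at $n=1$ in the $\lambda_1$ case, and via the type $(0,2n)$ vector in the $\lambda_2$ case); the explicit $SU(2,1)$ computation above, whose twisted $J^{n}_{1,0}$-types are $(n+1,0)$ and $(n+2,1)$, confirms the highest-weight choice, and the same parabolic bookkeeping transfers to $SU(3,1)$.
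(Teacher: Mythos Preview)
The paper states Proposition~\ref{p3.9} without a written proof; it is meant to follow from the table of Hodge types assembled in the preceding discussion, so your task is really to reconstruct that table-check.  Your reduction to the single numerical criterion $(p_U-q_U)+(p'_U-q'_U)=0$ is exactly the right distillation, and the treatment of the five non-tempered families $J^n_{1,0},J^n_{2,0},J^n_{1,1},J^n_{0,1},J^n_{0,2}$ is correct and matches the paper's intent.

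There is, however, a genuine gap: you only run through the non-tempered representations in the bullets preceding the statement.  The four discrete series are cohomological for \emph{every} $\lambda$, and the proposition --- as invoked in the proof of Theorem~\ref{thA} --- is a claim about all cohomological $U$: it asserts that $U$ must be a twist of $J_{1,1}$, thereby excluding the discrete series as well.  For $\lambda=n\lambda_2$ a parity argument disposes of them (the discrete series sit only in $H^3$, and the total weight $3+2n$ is odd, so no $(p,p)$-class can occur).  But for $\lambda=n\lambda_1$ or $n\lambda_3$ with $n$ odd the total weight $n+3$ is even, parity does not help, and one must actually pin down the coefficient $K$-type $(p'_U,q'_U)$ for each of the four Borels.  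This is precisely the ``crux'' you correctly flag in your last paragraph --- only now for the discrete series rather than for the non-tempered $A_{\mathfrak{q}}$'s.  The $SU(2,1)$ cross-check you cite (where the discrete-series types for $V^{n\lambda_1}$ come out as $(n+2,0),(n+1,1),(n,2)$) does not by itself determine which of the two $K$-pieces of the $SU(3,1)$ standard representation (the holomorphic line or the three-dimensional anti-holomorphic piece) appears for the two intermediate discrete series, and one of the two assignments would already produce a $(2,2)$-class at $n=1$.  Until that coefficient computation is carried out, the argument is incomplete.
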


\subsubsection{$Sp(4)$}

As a Lie group, the symplectic group $Sp_4(\mathbb{R})$ is defined to be the group
$$\{X\in M_{4\times 4}|X^tJX=J\}$$
where $J$ is the standard symplectic form $\begin{pmatrix}
0 & I_2 \\
-I_2 & 0
\end{pmatrix}$. The maximal torus of $Sp_4$ is the diagonal matrices $\mathrm{diag}(x,y,x^{-1},y^{-1})$. The maximal compact subgroup is $K=U(2)$. Let $X'=A+iB \in U(2)$ where $A,B$ are real matrices, the associated element in $Sp_4$ is $X=\begin{pmatrix}
A & B \\
-B & A
\end{pmatrix}$.

There are two fundamental weights $\lambda_1$ and $\lambda_2$. The corresponding representation $V=V^{\lambda_1}$ is the standard representation $W$. $V^{\lambda_2}$ is the kernel of the natural contraction map $\wedge^2W\to \mathbb{C}$ induced by the symplectic form over $W$. So we have $\wedge^2V^{\lambda_1}=V^{\lambda_2}\oplus \mathbb{C}$. We change the indexing by writing $V^{\lambda_1}$ as $V_{1,0}$, $V^{\lambda_2}$ as $V_{1,1}$, so that (more generally) the irreducible representations $V_{a,b}=V^{(a-b)\lambda_1+b\lambda_2}$ are parameterized by pairs of integers $\{(a,b)|a\geq b\geq0\}$. 


Now set 
\begin{align*}
\Xi_1&=\{(l_1,l_2)\in \mathbb{Z}^2| l_1>l_2>0\},&
\Xi_2&=\{(l_1,l_2)\in \mathbb{Z}^2| l_1>-l_2>0\},\\ 
\Xi_3&=\{(l_1,l_2)\in \mathbb{Z}^2| -l_2>l_1>0\},& 
\Xi_4&=\{(l_1,l_2)\in \mathbb{Z}^2| -l_2>-l_1>0\}.
\end{align*}
Then there is a one-to-one correspondence between the union of the four sets and the set of unitary equivalence classes of discrete series representations of $Sp_4(\mathbb{R})$. The integral point $(l_1,l_2)$ is called the \textit{Harish-Chandra parameter}.

\begin{enumerate}[leftmargin=0.8cm]
\item $D(l_1,l_2)$ denotes the holomorphic discrete series representation with the minimal $K$-type $\det^{l_2+2}\otimes \Sym^{l_1-l_2-1}$ if $(l_1,l_2)\in \Xi_1$.
\item $D(l_1,l_2)$ denotes the large discrete series representation with the minimal $K$-type $\det^{l_2}\otimes \Sym^{l_1-l_2+1}$ if $(l_1,l_2)\in \Xi_2$.
\item $D(l_1,l_2)$ denotes the large discrete series representation with the minimal $K$-type $\det^{l_2-1}\otimes \Sym^{l_1-l_2+1}$ if $(l_1,l_2)\in \Xi_3$.
\item $D(l_1,l_2)$ denotes the anti-holomorphic discrete series representation with the minimal $K$-type $\det^{l_2-1}\otimes \Sym^{l_1-l_2-1}$ if $(l_1,l_2)\in \Xi_4$.
\end{enumerate}
If $(l_1,l_2)\in\Xi_1$, the four discrete series representations $$\{D(l_1,l_2), D(l_1,-l_2), D(l_2,-l_1),D(-l_2,-l_1)\}$$ form a single $L$-packet, and they have the same infinitesimal character. $D(l_1,l_2)$ is \textit{holomorphic}, $D(-l_2,-l_1)$ is \textit{anti-holomorphic}, and the other two discrete series representations are called \textit{large discrete series representations}.

\vspace{.3in}

Next consider the parabolic subgroups
$$P_1(\mathbb{R})=\left(\begin{pmatrix}
* & * & * &*\\
* & * & * &*\\
0&0&*&*\\
0&0&*&*
\end{pmatrix} \in Sp_4(\mathbb{R})\right)\text{and}  \ P_2(\mathbb{R})=\left(\begin{pmatrix}
* & * & *&*\\
0 & * & *&*\\
0&0&*&0\\
0&*&*&*
\end{pmatrix}\in Sp_4(\mathbb{R})\right), $$
and let $M_k(\mathbb{R})A_k(\mathbb{R})N_k(\mathbb{R})$ be the Langlands decomposition of $P_k(\mathbb{R})$ for $k=1$ or $2$. Hence, we have
$M_1(\mathbb{R})\cong SL_2^{\pm}(\mathbb{R})$, $A_1(\mathbb{R})\cong \mathbb{R}^+$, $N_1(\mathbb{R})\cong \mathbb{R}^3$, $M_2(\mathbb{R})\cong SL_2(\mathbb{R})\times (\mathbb{Z}/2\mathbb{Z})$, $A_2(\mathbb{R})\cong\mathbb{R}^+$, and $N_2(\mathbb{R})\cong \mathbb{R} \ltimes \mathbb{R}^2$.

Let $D_k^+$ (resp. $D_k^-$) denote the holomorphic (resp. anti-holomorphic) discrete series of $SL_2(\mathbb{R})$ which has the minimal $K$-type $\begin{pmatrix}
\cos\theta & \sin\theta \\
-\sin\theta & \cos\theta
\end{pmatrix}\mapsto e^{i(k+1)\theta}$ (resp. $e^{-i(k+1)\theta}$). The representation $D_k:=\Ind_{SL_2(\mathbb{R})}^{SL_2^{\pm}(\mathbb{R})}D_k^+\cong \Ind_{SL_2(\mathbb{R})}^{SL_2^{\pm}(\mathbb{R})} D_k^-$ is irreducible, and its restriction to $SL_2(\mathbb{R})$ is $D_k^+\oplus D_k^-$. The quasi-character $\nu_1$ on $\mathbb{R}^+$ is defined by $\nu_1(a)=a$. Let $\sgn$ denote the non-trivial character on $\mathbb{Z}/2\mathbb{Z}$.

For each integer $k> 2$, we denote by $\sigma_k$ the Langlands quotient of the induced representation
$$\Ind^{Sp_4(\mathbb{R})}_{M_1(\mathbb{R})A_1(\mathbb{R})N_1(\mathbb{R})}(D_{2k-1}\otimes \nu_1\otimes 1).$$
Similarly, for each integer $l>1$, let $\omega_l^{\pm}$ denote the Langlands quotient of the induced representation
$$\Ind^{Sp_4(\mathbb{R})}_{M_2(\mathbb{R})A_2(\mathbb{R})N_2(\mathbb{R})}((D^{\pm}_{l}\otimes \sgn)\otimes \nu_1\otimes 1).$$
The representations $\sigma_k$ and $\omega_l^{\pm} $ are unitarizable and cohomological, but non-tempered.

Now let $V=V_{a,b}$ be an irreducible finite-dimensional representation. If $V$ is regular ($a>b>0$), only the discrete series representations are cohomological. If $a>b=0$, $\omega_{a+2}^{\pm}$ is also cohomological. If $a=b>0$, $\sigma_{a+3}$ is also cohomological. If $a=b=0$, all cohomological representations occur. 

So when $V=V_{0,0}$ is the trivial representation, we have the following cohomological representations and Hodge types:
\begin{itemize}[leftmargin=0.4cm]
\item four discrete series representations, with $H^3(\mathfrak{g},K,U)=\C$. Their Hodge types are $(3,0)$, $(2,1)$, $(1,2)$, and $(0,3)$. 
\item non-tempered representations $\omega_2^+$, $\sigma_3$, $\omega_2^-$: $H^2(\mathfrak{g},K,U)=H^4(\mathfrak{g},K,U)=\C$. Their Hodge types for $H^2$ are $(2,0)$, $(1,1)$, and $(0,2)$ (for $H^4$, add $(1,1)$).
\end{itemize}
More generally, we arrive at the following
\begin{proposition}\label{p3.10}
The only case to get Hodge classes in $H^*(\mathfrak{g},K,U\otimes V)$ is when $V$ is $V_{a,a}$ and $U$ is the non-tempered representation $\sigma_{a+3}$.  
\end{proposition}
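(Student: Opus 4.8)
The plan is to march through the finite list of cohomological representations attached to each irreducible $V=V_{a,b}$ and to compute, via Proposition \ref{p3.4}, the Hodge type of every nonzero class, checking the condition $p=q$ by hand. By the classification recalled just above, the cohomological $U$ for $V_{a,b}$ are: the four discrete series $D(l_1,l_2)$ in all cases; in addition $\omega_{a+2}^{\pm}$ when $b=0$; and in addition $\sigma_{a+3}$ when $a=b$. (All $Sp(4)$-representations are of real type, so it is the real-type formula of Proposition \ref{p3.4} that applies.) For each such $U$ the cohomology is one-dimensional in each degree where it is nonzero, with a single ``form type'' $(p_U,q_U)$ that depends only on $U$ and is read off from the trivial-coefficient list above. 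Thus the whole problem reduces to determining, for each $U$, the Hodge type $(p',q')$ of the distinguished vector of $V$ paired with the lowest $K$-type; the class then has type $(p_U+p',q_U+q')$, and is a Hodge class precisely when $(p_U-q_U)+(p'-q')=0$.

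First I would record the parities. Since $V_{a,b}$ sits inside $W^{\otimes(a+b)}$ with $W=V_{1,0}$ carrying $p-q=\pm 1$, every weight vector of $V_{a,b}$ has $p'-q'\equiv a+b \pmod 2$, while the form types give $p_U-q_U$ odd for the discrete series and even ($0$ for $\sigma_k$, $\pm 2$ for $\omega_l^{\pm}$) for the non-tempered representations. The finer input is the actual value of $p'-q'$, which I would extract from the grading element $E$ of \S2.1 via $p'-q'=2E(v)$, applied to the weight vector spanning the one-dimensional $L$-representation $V/\mathfrak{u}V$ of the relevant $\theta$-stable $\mathfrak{q}=\mathfrak{l}\oplus\mathfrak{u}$. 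For the discrete series $\mathfrak{q}$ is a $\theta$-stable Borel, so $v$ is an extremal weight vector and $p'-q'=\pm(a+b)$ with the sign \emph{reinforcing} that of $p_U-q_U$; for $\omega_{a+2}^{\pm}$ the Levi is that of $P_2$ and $v$ is again of extremal type reinforcing the $\pm 2$. One then finds $p-q=\pm(a+b+3)$ or $\pm(a+b+1)$ for the four discrete series and $p-q=\pm(a+2)$ for $\omega_{a+2}^{\pm}$, none of which vanishes. By contrast, for $\sigma_{a+3}$ the parabolic is that of $P_1$ with the balanced ``Saito--Kurokawa'' Levi, and the distinguished vector is central, $p'-q'=0$; since $p_U-q_U=0$ as well, the class has type $(a+1,a+1)$ in $H^2$ and $(a+2,a+2)$ in $H^4$ --- genuine Hodge classes.

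Assembling the cases proves the proposition: in each $V_{a,b}$ the discrete series and the $\omega$-contributions are off-diagonal, whereas the $\sigma_{a+3}$-contribution --- present exactly when $a=b$ --- lands on the diagonal.

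The step I expect to be the main obstacle is the precise identification of $(p',q')$ for each $U$: one must pin down the one-dimensional $L$-representation $V/\mathfrak{u}V$ inside $V_{a,b}$ and evaluate $E$ on the corresponding weight vector, handling the Borel case (discrete series) and the two non-tempered Levi types separately. The delicate point is sign bookkeeping --- verifying that for the discrete series and for $\omega_{a+2}^{\pm}$ the vector contribution $p'-q'$ reinforces rather than cancels the form contribution $p_U-q_U$, so that $p-q$ stays away from $0$ (otherwise, e.g., a cancelling large discrete series would spuriously produce a Hodge class for $V_{1,0}$). Once these vector types are fixed, the diagonal check is immediate.
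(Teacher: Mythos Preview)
Your approach is exactly the one the paper takes (the paper states the proposition as a summary of the preceding case analysis, so ``do the bookkeeping'' \emph{is} the proof), and your identification of the $\sigma_{a+3}$ case as the unique diagonal contribution, with Hodge types $(a+1,a+1)$ in $H^2$ and $(a+2,a+2)$ in $H^4$, is correct and matches what the paper uses later in the proof of Lemma~5.1.

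There is, however, a slip in the values you quote for the large discrete series. For the two $\theta$-stable Borels with $(p_U,q_U)=(2,1)$ and $(1,2)$, the nilradical $\mathfrak{u}$ corresponds to a positive system in which the highest (extremal) weight of $V_{a,b}$ is $(a,-b)$ (resp.\ its negative), so the distinguished vector has $p'-q'=\pm(a-b)$, not $\pm(a+b)$. Hence the large discrete series contribute $p-q=\pm(a-b+1)$, not $\pm(a+b+1)$. The conclusion is unaffected --- since $a\geq b$ one has $a-b+1\geq 1>0$ --- but your stated values are off. (A quick sanity check: for $V_{1,0}$ both formulas give $\pm 2$, which is why the discrepancy is invisible in the genus-2 curve case.) You already flagged exactly this step --- the identification of the weight of $V/\mathfrak{u}V$ and its $E$-eigenvalue for each of the four Borels --- as the delicate one; that instinct was right, and it is worth carrying out the four Weyl-chamber computations explicitly rather than asserting ``extremal with the reinforcing sign.''
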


\section{Some multiplicity formulas}

\subsection{Saito-Kurokawa lifting and non-tempered representations}

We first consider representations of $Sp(4,\R)$. Let $P(\R)$ be the Siegel parabolic subgroup, and $P(\R)=M(\R)A(\R)N(\R)$ the Langlands decomposition of $P$. Hence we have $M(\R)\cong SL_2^{\pm}(\R)$, $A(\R)\cong \R^+$, $N(\R)\cong \R^3$. 

Let $D_k^+$ (resp. $D_k^-$) denote the holomorphic (resp. anti-holomorphic) discrete series of $SL_2(\mathbb{R})$ which has the minimal $K$-type $\begin{pmatrix}
\cos\theta & \sin\theta \\
-\sin\theta & \cos\theta
\end{pmatrix}\mapsto e^{i(k+1)\theta}$ (resp. $e^{-i(k+1)\theta}$). The representation $D_k:=\Ind_{SL_2(\mathbb{R})}^{SL_2^{\pm}(\mathbb{R})}D_k^+\cong \Ind_{SL_2(\mathbb{R})}^{SL_2^{\pm}(\mathbb{R})} D_k^-$ is irreducible, and its restriction to $SL_2(\mathbb{R})$ is $D_k^+\oplus D_k^-$. The quasi-character $\nu_1$ on $\mathbb{R}^+$ is defined by $\nu_1(a)=a$. Let $k\geq 2$ be an integer. We denote by $\sigma_k$ the Langlands quotient of the induced representation
$$\mathrm{Ind}_{P(\R)}^{Sp(4,\R)}(D_{2k-1}\otimes \nu_1\otimes 1).$$
The representations $\sigma_k$ are unitarizable and cohomological. The  infinitesimal character is $(k-1,2-k)$ and the minimal $K$-type is $k-1,1-k$. Let $\sigma_{k}^-$ be the cohomological representation of $PGSp(4,\R)$ given in \cite{Sch1}. It is of course the same as a representation of $GSp(4,\R)$ with trivial central character. The restriction of $\sigma_k^-$ to $Sp(4,\R)$ is exactly  $\sigma_k$.

Consider the natural maps $i: Sp(4,\R)\to GSp(4,\R)$ and $p:GSp(4,\R)\to PGSp(4,\R)$. The kernel of the composition $\pi\circ i$ is $\pm I_4$. Now let  $\Gamma$ be a congruence subgroup of $Sp(4,\R)$ such that $-1\in \Gamma$. We are interested in the multiplicity of $\sigma_k$ inside $\mathcal{A}^2(\Gamma\backslash Sp(4,\R))$. Note that any paramodular subgroup of $Sp(4,\R)$ satisfies the above condition. The congruence subgroup $\Gamma$ can be identified with a congruence subgroup of $PGSp(4,\R)$, we also denote it by $\Gamma$.

($GSp(4,\R)$ has two componets. Let $GSp(4,\R)^+$ be the subgroup consisting of elements with positive multipliers. This is a product of $Sp(4,\R)$ and scalers. Let $I'$ be an element with multiplier $-1$. Any $\Gamma\subset Sp(4,\R)$ defines a subgroup of $GSp(4,\R)$ by taking products with scalers and $I'$. The group $PGSp(4,\R)$ has two components since scalers have positive multipliers. The group $\Gamma\subset PGSp(4,\R)$ is generated by the image of $\Gamma\subset Sp(4,\R)$ and $I'$.)

\begin{lemma} The multiplicity of $\sigma_k$ in $\mathcal{A}^2(\Gamma\backslash Sp(4,\R))$ is equal to the multiplicity of $\sigma_k^-$ in $\mathcal{A}^2(\Gamma\backslash PGSp_4(4,\R))$.
\end{lemma}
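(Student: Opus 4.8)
The plan is to factor the comparison through the identity component of $PGSp(4,\R)$, which is canonically $Sp(4,\R)/\{\pm I_4\}$, and then to account for the extra connected component using the involution $I'$. Write $G=PGSp(4,\R)$, let $G^+$ be its identity component, and let $\bar\Gamma=\Gamma/\{\pm I_4\}$ be the image of $\Gamma$ in $G^+$; the group in the right-hand side of the Lemma is $\Gamma_G:=\langle\bar\Gamma,I'\rangle\subset G$. First I would use the hypothesis $-1\in\Gamma$: every $f\in\mathcal{A}^2(\Gamma\backslash Sp(4,\R))$ is automatically invariant under $\{\pm I_4\}$, so $\mathcal{A}^2(\Gamma\backslash Sp(4,\R))=\mathcal{A}^2(\bar\Gamma\backslash G^+)$. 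Since $\sigma_k$ is the restriction of the $PGSp$-representation $\sigma_k^-$, the element $-I_4$ acts trivially on it, so $\sigma_k$ descends to an irreducible representation $\tau:=\sigma_k^-|_{G^+}$ of $G^+$; hence the multiplicity of $\sigma_k$ on the left equals the multiplicity $m_{G^+}(\tau)$ of $\tau$ in $\mathcal{A}^2(\bar\Gamma\backslash G^+)$.

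Next I would relate $G^+$ to the full group $G$. Because $I'\in\Gamma_G$ and (for paramodular $\Gamma$) $I'$ normalizes $\bar\Gamma$ with $I'^2\in\bar\Gamma$, one checks that $\Gamma_G\cap G^+=\bar\Gamma$, and that the inclusion $G^+\hookrightarrow G$ induces a measure-preserving, right-$G^+$-equivariant bijection $\bar\Gamma\backslash G^+\xrightarrow{\ \sim\ }\Gamma_G\backslash G$ (the nonidentity component of $G$ being absorbed using $I'\in\Gamma_G$). Consequently $\Res_{G^+}\mathcal{A}^2(\Gamma_G\backslash G)\cong\mathcal{A}^2(\bar\Gamma\backslash G^+)$ as $G^+$-representations, and Frobenius reciprocity for the index-two inclusion $G^+\subset G$ gives
\[
m_{G^+}(\tau)=\dim\Hom_{G^+}\!\big(\tau,\Res_{G^+}\mathcal{A}^2(\Gamma_G\backslash G)\big)=\dim\Hom_{G}\!\big(\Ind_{G^+}^{G}\tau,\ \mathcal{A}^2(\Gamma_G\backslash G)\big).
\]

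The content of the Lemma is then isolated as follows. Since $\tau$ extends to $G$ (namely as $\sigma_k^-$), it is $I'$-stable, so $\Ind_{G^+}^{G}\tau\cong\sigma_k^-\oplus\sigma_k^+$, where $\sigma_k^+=\sigma_k^-\otimes\sgn$ and $\sgn$ is the sign-of-multiplier character of $G$. The display therefore reads $m_{G^+}(\tau)=m_G(\sigma_k^-)+m_G(\sigma_k^+)$, and the Lemma is precisely the statement that the second summand vanishes. Equivalently, the two extensions $\sigma_k^{\pm}$ are distinguished by the sign through which the Atkin--Lehner/Fricke involution $I'$ acts on the multiplicity space $\tau^{\bar\Gamma}\subset\mathcal{A}^2(\bar\Gamma\backslash G^+)$, and what must be shown is that $I'$ acts on all of $\tau^{\bar\Gamma}$ through the single sign attached to $\sigma_k^-$.

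\textbf{The hard part} will be exactly this last vanishing $m_G(\sigma_k^+)=0$; everything before it is formal. I would resolve it using the defining property of $\sigma_k^-$ in \cite{Sch1} as the cohomological member of the archimedean Saito--Kurokawa packet, together with local--global compatibility of the packet's sign character: every discrete automorphic representation contributing to this $\tau$-isotypic space is a Saito--Kurokawa lift, and its archimedean component is forced to be $\sigma_k^-$ rather than $\sigma_k^+$, so that $I'$ acts on $\tau^{\bar\Gamma}$ with the uniform sign picking out $\sigma_k^-$. This is the genuinely subtle step, since it is nothing but the Atkin--Lehner sign at the infinite place; the analogous sign at the finite place $p$ is what will later split Proposition \ref{thB} according to the parity of $k$ and the $\pm$ Atkin--Lehner eigenspaces of the classical cusp forms.
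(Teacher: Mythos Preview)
The paper's proof is a single sentence: ``The two spaces $\Gamma\backslash Sp(4,\R)$ and $\Gamma\backslash PGSp(4,\R)$ are actually the same.''  Your argument recovers this identification (your bijection $\bar\Gamma\backslash G^+\xrightarrow{\sim}\Gamma_G\backslash G$) but then goes well beyond it, and in doing so you have put your finger on a point the paper's proof does not address.  The coincidence of the underlying spaces only gives an isomorphism of $G^+$-representations $\mathcal{A}^2(\bar\Gamma\backslash G^+)\cong\Res_{G^+}\mathcal{A}^2(\Gamma_G\backslash G)$; it does \emph{not} by itself equate the $G^+$-multiplicity of $\tau=\sigma_k$ with the $G$-multiplicity of $\sigma_k^-$.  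Your Clifford-theoretic computation $m_{G^+}(\tau)=m_G(\sigma_k^-)+m_G(\sigma_k^+)$ is correct, and the residual claim $m_G(\sigma_k^+)=0$ is genuinely what is at stake.

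So your route is different in that it is more honest about what needs to be shown.  What it costs you is that your proposed resolution---ruling out $\sigma_k^+$ via the Saito--Kurokawa packet structure---is essentially the same Arthur-classification analysis the paper carries out \emph{after} the Lemma, now imported into the Lemma itself.  That is fine logically (and arguably where it belongs), but it means the Lemma is no longer the lightweight reduction the paper presents it as; it already uses the packet machinery.  One could alternatively leave the Lemma as the weaker identity $m_{Sp}(\sigma_k)=m_G(\sigma_k^-)+m_G(\sigma_k^+)$ and then run the paper's later analysis for both signs, checking from Schmidt's tables which of $\sigma_k^{\pm}$ actually occurs as the archimedean component of a paramodular SK lift; that keeps the representation-theoretic reduction separate from the arithmetic input.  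Either way, your identification of the ``hard part'' is on target, and your outline for resolving it is sound.
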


\begin{proof}
The two spaces  $\Gamma\backslash Sp(4,\R)$ and $\Gamma\backslash PGSp(4,\R)$ are actually the same.
\end{proof}

\subsubsection{Arthur's classification of automorphic representations of $GSp(4)$}

The multiplicity of $\sigma_k^-$ in $\mathcal{A}^2(\Gamma\backslash PGSp(4,\R))$ can be calculated in terms of automorphic forms. Let $\Gamma'$ be the corresponding open subgroup of $GSp(4,\A_f)$. Let $\pi$ be an automorphic representation of $GSp(4,\A)$ and $\pi=\pi_f\otimes \pi_{\infty}$ be its decomposition into nonarchimedean and archimedean components. Then 
$$\dim\Hom(\sigma_k^-,\mathcal{A}^2(\Gamma\backslash PGSp(4,\R)))=\sum\limits_{\pi_{\infty}=\sigma_k^-}\dim \pi_f^{\Gamma'}\mathrm{mult}(\pi,\mathcal{A}^2(GSp(4,\A))).$$

Arthur \cite{Art} classified the discrete spectrum of the group $GSp(4)$. They come in finite or infinite packets, of which there are six types. The general type consists of those representations that lift to cusp forms on $GL(4)$. The Yoshida type can be characterized as representations whose $L$-functions are of the form $L(s,\pi_1)L(s,\pi_2)$ with distinct cuspidal automorphic representations on $GL(2)$. At least conjecturally these two types consist of everywhere-tempered representations. Then there are three non-tempered types (Howe-Piatetski-Shapiro type, Saito-Kurakowa type, and Soudry type), associated with the three conjugacy classes of parabolic subgroups. Finally, there is a type consisting of one-dimensional representations. The automorphic representations of each type are parameterized by Arthur parameters, and their multiplicities are given explicitly in terms of Arthur parameters. See \cite{Sch3} for a more detailed description of the six types.

Now we want to study the multiplicity of $\sigma_k^{-}$ in $L^2_{\mathrm{disc}}(\Gamma\backslash PGSp_4(\mathbb{R}))$. Except for minimal $K$-type $(1,-1)$, the $\sigma_k^-$ can only appear in packets of Saito-Kurokawa (SK) type. They cannot appear in packets of general type or Yoshida type, since they are non-tempered. And they cannot appear in packets of Howe-Piatetski-Shapiro type or Soudry type, since these can be explicitly calculated (See \cite{Sch3}). Packets of SK-type are parametrized by pairs ($\mu$,$\sigma$), where $\mu$ is a cuspidal representation of $GL(2,\A)$ with trivial central character, and $\sigma$ is a quadratic Hecke character. All representations of SK-type are obtained via lifting from $GL(2)$ by Arthur's classification.

\subsubsection{Automorphic representations associated with elliptic modular forms}Let $f$ be a cuspidal new form for $\Gamma_0(N)$ with weight $k\geq 2$, we associate an automorphic representation $\pi_f$ of $GL_2(\A)$ as in \cite{Bu}. The representation $\pi_f$ is irreducible if $f$ is a new form, and $\pi_f$ is a restricted tensor product of irreducible representations $\pi_{f,v}$ over local groups $GL_2(\Q_{v})$. Given such an $f$, An explicit algorithm for computing local components is given in \cite{LW}.

If $v=\infty$, the representations of  $GL_2(\R)$ are well-known. In particular, $\pi_{f,\infty}$ is the unique discrete series subrepresentation of the representation constructed via unitary induction from the character
$$\begin{pmatrix}
    t_1&*\\
    & t_2
\end{pmatrix}\mapsto \left|\frac{t_1}{t_2}\right|^{k/2}\mathrm{sgn}(t_1)^k$$
of the Borel subgroup of $GL_2(\R)$.
 
Now let $p$ be a finite prime, the irreducible infinite-dimensional representations of $GL_2(\Q_p)$ fall into three classes: the principal series representations $\pi(\chi_1,\chi_2)$, the special representations $\mathrm{St}\otimes  \chi_1$, and the supercuspidal  representations $\mathrm{Ind}_K^G\tau$ (See \cite{BH}). Here, $\chi_1$ and $\chi_2$ are characters of $\Q_p^*$. The representation denoted by $\pi(\chi_1, \chi_2)$ is the principal series attached to the characters $\chi_1$ and $\chi_2$, defined whenever $\chi_1/\chi_2\neq |\cdot|^{\pm 1}$. The representation $\mathrm{St}$ is the Steinberg representation. The supercuspidal representations of $G$ are induced from certain finite-dimensional characters $\tau$ of compact-mod-center subgroups $K\subset G$.

\subsubsection{Some explicit multiplicity formulas}

Now let $p$ be a prime and we consider $\Gamma=\Gamma^{\mathrm{para}}(p)$. Let $\Gamma'$ be the corresponding open subgroup of $GSp(4,\A_f)$, then $\Gamma'_p=K(p)$, the local paramodular subgroup , and $\Gamma'_{p'}=GSp(4,\mathcal{O}_{p'})$ for $p'\neq p$. Therefore, we are looking for (discrete) automorphic representations $\pi$ with trivial central character, with archimedean component $\pi_\infty=\sigma_k^-$, with $p$-component $\pi_p$ such that $\pi_p$ admits non-zero $K(p)$-invariant vectors, and the other non-archimedean components unramified. The archimedean condition forces $\pi$ to be of SK-type.

Now we look at Table 2 in \cite{Sch2}. We see that $\mu_\infty$ must be a discrete series representation of weight $2k-2$ so that $\mu$ corresponds to a cuspidal newform f of this weight. Looking at the possibilities for $\pi_p$, we see that it must be of type IIb or Vb or VIc, as the others do not admit $K(p)$-invariant vectors. We also see that $\sigma$ must be unramified at every place, or there is at least one place where the representation does not have paramodular vectors. (Table A.12 in \cite{RS}) Thus $\sigma$ is in fact trivial, and $\mu$ has trivial central character. It follows that $f\in S_{2k-2}(\Gamma_0(N))$ for some $N$. In fact, to produce IIb at the place $p$ we must have $N=1$, and to produce Vb or VIc at $p$ we must have $N=p$. The next step is to determine which local new forms lift via Arthur's multiplicity formula. 
(The determination of the levels is because the local components of a new form can be explicitly computed.)

\begin{itemize}[leftmargin=0.8cm]
\item[IIb.]
Arthur's multiplicity formula says that in order for $\mu$ to lift, we must have $(-1)^n=\epsilon(1/2,\mu)$, where $n$ is the number of places where we do not have the base point in the local Arthur packet. Again from Table 2 we see $n=0$, so that we must have $\epsilon(1/2,\mu)=1$. The archimedean contribution to the epsilon-factor is $(-1)^{k-1}$. So we must have $\epsilon(1/2,\mu_p)=1$ if $k$ is odd, and $\epsilon(1/2,\mu_p)=-1$ if $k$ is even. We see the following: If $k$ is even, then there are no lifts producing IIb (at the place $p$), and if $k$ is odd, then the number of lifts with IIb is $\dim S_{2k-2}(SL(2,\Z))$. 
\vspace{0.1in}

\item[Vb.] Now consider lifts with a Vb component at $p$. From Table 2, $\mu_p$ must be a non-trivial twist of the Steinberg representation. But it must also be an unramified twist, or we won't have $K(p)$-invariant vectors. Thus $\mu_p=\xi\mathrm{St}_{GL(2)}$, where $\xi$ is the non-trivial, quadratic, unramified character of $\Q_p$. The local sign of such $\mu_p$ is $+1$. Thus, to satisfy Arthur's multiplicity formula, we must have $k$ odd. Hence, for $k$ odd, exactly the newforms in $S_{2k-2}(\Gamma_0(p))^+$ lift, where "$+$" indicates that the Atkin-Lehner eigenvalue at $p$ is $+1$. The number of automorphic representations as above with Vb at the place $p$ is therefore $\dim S_{2k-2}(\Gamma_0(p))^{new,+}$ for odd $k$. For even $k$ there are no such representations.\vspace{0.1in}

\item[VIc.]
Similarly, the number of automorphic representations as above with VIc at the place $p$ is $\dim S_{2k-2}(\Gamma_0(p))^{new,-}$ for even $k$ (while for odd $k$ there are no such representations). 
\end{itemize}

In summary, we have proved the following result.

\begin{proposition}\label{thB}
Let $\Gamma=\Gamma^{\mathrm{para}}(p)$ be a paramodular subgroup of prime level, then the multiplicity of $\sigma_k$ in $\mathcal{A}^2(\Gamma\backslash Sp(4,\R))$ is: 
\begin{itemize}[leftmargin=0.4cm]
\item $\dim S_{2k-2}(SL(2,\Z))+\dim S_{2k-2}(\Gamma_0(p))^{new,+}$ if $k$ is odd;
\item $\dim S_{2k-2}(\Gamma_0(p))^{new,-}$ if $k$ is even.
\end{itemize}
\end{proposition}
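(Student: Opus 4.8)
The plan is to translate the multiplicity of $\sigma_k$ into an adelic count governed by Arthur's classification of the discrete spectrum of $GSp(4)$, and then to read off the answer from the local representation theory at the prime $p$. First I would invoke the preceding Lemma to replace $\mathcal{A}^2(\Gamma\backslash Sp(4,\R))$ by $\mathcal{A}^2(\Gamma\backslash PGSp(4,\R))$, since the two quotients coincide; this lets me work on $PGSp(4)$, where Arthur's parametrization is available. Next I would apply the displayed formula expressing $\dim\Hom(\sigma_k^-,\mathcal{A}^2(\Gamma\backslash PGSp(4,\R)))$ as a sum over global automorphic representations $\pi=\pi_\infty\otimes\pi_f$ with $\pi_\infty=\sigma_k^-$, each weighted by $\dim\pi_f^{\Gamma'}$ times the global multiplicity $\mathrm{mult}(\pi,\mathcal{A}^2(GSp(4,\A)))$. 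The problem thus reduces to enumerating the $\pi$ that contribute and computing each weight.

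The second step is to cut the packet types down. Because $\sigma_k^-$ is non-tempered, $\pi$ cannot be of general or Yoshida type; the Howe-Piatetski-Shapiro and Soudry contributions are computed directly and seen to vanish under the paramodular constraints. Hence only Saito-Kurokawa packets contribute, parametrized by pairs $(\mu,\sigma)$ with $\mu$ a cuspidal representation of $GL(2,\A)$ of trivial central character and $\sigma$ a quadratic Hecke character. I would then impose the local conditions coming from $\Gamma'=\Gamma^{\mathrm{para}}(p)$: at $p$ the component $\pi_p$ must carry a nonzero $K(p)$-fixed vector, forcing $\pi_p$ to be of type IIb, Vb, or VIc, while at all other finite places $\pi$ is unramified, which forces $\sigma$ trivial and $\mu$ of trivial central character. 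The archimedean condition $\pi_\infty=\sigma_k^-$ pins down $\mu_\infty$ as the discrete series of weight $2k-2$, so $\mu$ is attached to a classical newform in $S_{2k-2}(\Gamma_0(N))$, with $N=1$ forced by IIb and $N=p$ forced by Vb and VIc.

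The final and most delicate step is to run Arthur's multiplicity formula case by case. The formula requires $(-1)^n=\epsilon(1/2,\mu)$, where $n$ counts the places away from the base point of the local packet and $\epsilon(1/2,\mu)$ is the global root number, whose archimedean part I would compute to be $(-1)^{k-1}$. For IIb ($N=1$) the component $\mu_p$ is unramified with trivial local sign, so combined with the archimedean factor the constraint selects $k$ odd and contributes $\dim S_{2k-2}(SL(2,\Z))$ (and nothing for $k$ even). For Vb ($N=p$), $\mu_p$ is the unramified quadratic twist of Steinberg, whose local sign is $+1$, again forcing $k$ odd and contributing $\dim S_{2k-2}(\Gamma_0(p))^{new,+}$. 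For VIc the analogous sign computation reverses the parity, so only $k$ even survives, contributing $\dim S_{2k-2}(\Gamma_0(p))^{new,-}$. Summing the surviving contributions yields the stated formula. I expect the main obstacle to be exactly this bookkeeping of local signs and paramodular invariants --- matching each member of the local Arthur packet at $p$ against both the $K(p)$-fixed-vector condition and the epsilon-factor constraint --- which rests on the Roberts-Schmidt and Schmidt tables rather than on any fundamentally new input.
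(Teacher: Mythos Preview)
Your proposal is correct and follows essentially the same route as the paper: pass to $PGSp(4)$ via the Lemma, rewrite the multiplicity adelically, use Arthur's classification to reduce to Saito--Kurokawa packets, impose the paramodular constraint at $p$ to isolate types IIb, Vb, VIc (with $\sigma$ trivial and $N\in\{1,p\}$), and then apply the multiplicity formula $(-1)^n=\epsilon(1/2,\mu)$ case by case using the Roberts--Schmidt and Schmidt tables. The only point where you are slightly less explicit than the paper is in tracking the integer $n$ (the number of non--base-point places) in each case --- the paper reads $n=0$ for IIb directly off Table~2 --- but this is bookkeeping rather than a missing idea.
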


The dimension of elliptic modular forms can be found on \url{http://www.lmfdb.org/ModularForm/GL2/Q/holomorphic/}. This will be used in the next section.

\section{Geometric Applications}

Finally we turn to Hodge classes and instances of the Hodge conjecture for universal families over the locally symmetric varieties, first in the case of an arithmetic quotient of Siegel upper half space, then for ball quotients of dimensions 2 and 3.

\subsection{Families over a Siegel modular threefold}

\subsubsection{Local systems and their cohomology}

Let  $\mathfrak{H}_2$ be the Siegel upper half space of degree two. The group  $Sp(4,\R)$ acts naturally on $\mathfrak{H}_2$ via M\"{o}bius transformation.  The quotient $X(N):=\Gamma^{\text{para}}(N)\backslash\mathfrak{H}_2$ is a moduli space for complex abelian surfaces with polarization type $(1,N)$. More generally, we can take $\Gamma$ to be any arithmetic subgroup and write $X(\Gamma)$.  Let $\pi\colon\mathcal{A}\to X(\Gamma)$ be the universal family. The local system $R^1\pi_*\C$ is exactly the local system $\V_{1,0}$. All other local systems are sub-local systems of the higher direct images of the fiber product of the universal family.  There is also a universal family of genus $2$ curves $\mathcal{C}\to X(\Gamma)$; let $\bar{\mathcal{C}}$ denote a smooth compactification.

By Prop.~\ref{p3.10}, the only real Hodge classes in cohomologies of fiber powers $\mathcal{A}^k$ and $\mathcal{C}^k$ of the total spaces come from $$H^2(\mathfrak{sp_4},U(2);U\otimes V_{\R})\cong \R\cong H^4(\mathfrak{sp_4},U(2);U\otimes V_{\R})$$ when $V$ is $V_{a,a}$ (for some $a\leq k$ resp.~$\tfrac{k}{2}$) and $U$ is the non-tempered representation $\sigma_{a+3}$. Note that when $a=0$, these are just pullbacks of Hodge classes from $X(N)$, and this is all that occurs for the fourfold $\mathcal{C}$; for $\mathcal{A}$, $\mathbb{V}_{1,1}$ occurs in the relative $\mathcal{H}^2$. 

\begin{lemma}
These real Hodge classes are actually rational Hodge classes.
\end{lemma}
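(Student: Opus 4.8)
The plan is to exploit the fact that $\sigma_{a+3}$ contributes to these cohomology groups in a \emph{single} Hodge type, so that the real Hodge classes exhaust an entire Hecke-isotypic summand, and then to exhibit that summand as the realification of a $\QQ$-sub-Hodge-structure. First I would record the ambient structure: since $V_{a,a}$ is of real type and defined over $\QQ$ as a representation of the $\QQ$-group $Sp_4$, the local system $\mathbb{V}_{a,a}$ is a $\QQ$-local system, and $\IH^*(\overline{X},\mathbb{V}_{a,a})$ carries a canonical $\QQ$-Hodge structure (Saito's mixed Hodge modules together with the Decomposition Theorem), on which the Hecke algebra acts by $\QQ$-rational endomorphisms of Hodge structures. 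By Zucker's conjecture this $\QQ$-Hodge structure underlies, as an $\R$-vector space, the $L^2$-cohomology in which the classes were computed.

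Next I would invoke the computation of \S\ref{S3}: the space $H^2(\mathfrak{sp}_4,U(2);\sigma_{a+3}\otimes V_{a,a})\cong\R$ is one-dimensional and concentrated in the single Hodge type $(p,p)$, and likewise for $H^4$. Hence each occurrence of $\sigma_{a+3}$ contributes one line, entirely made of Hodge classes, with no companion classes of any other type; summing over the multiplicity, the full $\sigma_{a+3}$-isotypic subspace $M\subseteq\IH^2(\overline{X},\mathbb{V}_{a,a})$ lies in bidegree $(p,p)$. The point of this observation is that on $M$ the condition ``to be of type $(p,p)$'' is vacuous, so the identification of the space of real Hodge classes with $M$ is insensitive to \emph{which} Hodge structure (the $L^2$ one or the intersection-cohomology one) is used. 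In particular this step needs no input from the Harris--Zucker conjecture.

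It then remains to show that $M$ is defined over $\QQ$. The idea is that $M$ is cut out of $\IH^2(\overline{X},\mathbb{V}_{a,a})_{\QQ}$ by the Hecke algebra as the part on which Hecke acts through Saito--Kurokawa eigensystems. By Proposition \ref{thB} these eigensystems are governed by the newforms $f$ counted there, and the set of all such SK-eigensystems of the given weight and level is stable under $\mathrm{Gal}(\overline{\QQ}/\QQ)$, since Galois permutes the $f$ within Hecke orbits and carries SK lifts to SK lifts. As the Hecke operators are $\QQ$-rational, the sum over this Galois-stable set of eigensystems is a $\QQ$-sub-Hodge-structure $M_{\QQ}$ with $M_{\QQ}\otimes_{\QQ}\R=M$. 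Being concentrated in type $(p,p)$, $M_{\QQ}$ consists entirely of rational Hodge classes, so the real Hodge classes $M=M_{\QQ}\otimes\R$ are spanned by rational ones. The hard part will be precisely this last $\QQ$-rationality: one must check that the isotypic projector onto the Saito--Kurokawa part is defined over $\QQ$, i.e.\ that passing to Galois orbits of the relevant newforms recovers a $\QQ$-rational summand of $\IH^2$; once purity in type $(p,p)$ has removed the Harris--Zucker ambiguity, this Galois-descent statement is the only genuine content.
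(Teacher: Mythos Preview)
Your proof takes a much longer route than the paper, and the detour introduces a gap.  The paper's observation is that in cohomological degree~$2$ the representation $\sigma_{a+3}$ is the \emph{only} one contributing: by Proposition~\ref{p3.10} the other cohomological representations for $V_{a,a}$ are discrete series, and these contribute only to $H^3$.  Consequently your isotypic piece $M$ is \emph{all of} $\IH^2(\overline{X},\mathbb{V}_{a,a})$, not a proper summand.  Since $\IH^2$ already carries Saito's rational Hodge structure, the $\QQ$-rationality of $M$ is automatic, and the entire Hecke/Galois-descent argument of your third paragraph is superfluous.

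The attempted Harris--Zucker bypass in your second paragraph does not work as stated.  Knowing that $M$ is concentrated in type $(p,p)$ for the $L^2$-Hodge structure does \emph{not} by itself tell you that $M$ (or any $\QQ$-sub-Hodge-structure it may underlie) is of type $(p,p)$ for Saito's Hodge structure; that is precisely the content of Harris--Zucker in this degree, and the ``single Hodge type'' observation only removes the ambiguity \emph{after} the two structures are known to agree.  What actually makes the argument go through is the known case of H--Z recorded in \S2.3: for the Baily--Borel compactification of a Siegel modular threefold the boundary has codimension $c=2$, so for $i=2$ one has $\IH^2=W_{2p}H^2=H^2_{(2)}$ as Hodge structures.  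With that in hand the paper's proof is a single sentence: the whole of $\IH^2(\overline{X},\mathbb{V}_{a,a})$ is pure of type $(a+1,a+1)$ and defined over~$\QQ$, so every class in it is a rational Hodge class.
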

\begin{proof}
Writing $X(N)^*$ for the Baily-Borel compactification, Props.~\ref{p3.4} and \ref{p3.10} give 
$$IH^{2}(X(N)^*,\V_{a,a})\cong H^{2}(\mathfrak{sp}_4,U(2);\sigma_{a+3}\otimes V_{{a,a};\R})^{\oplus \text{mult.}(\Gamma^{\text{para}}(N),\sigma_{a+3})}$$ 

which is pure of type $(a+1,a+1)$ (with the LHS obviously defined over $\Q$). The point is that the other cohomological representations for $\V_{a,a}$ (which are discrete series) only contribute to $H^3$. The $H^4$ case follows by duality.
\end{proof}

\subsubsection{The universal curve}

Let $\V$ be an arbitrary local system. It turns out that the only case for which $H^3(\mathfrak{sp}_4,U(2), U\otimes V)\neq 0$ is when $U$ is a discrete series. However, these cohomology groups do not have real Hodge classes. Therefore, $IH^3(\mathcal{A}_2,\mathbb{V})^{2,2}=0$ for any local system $\mathbb{V}$. This leads to another proof of Arapura's result in \cite{Ar1}.

\begin{theorem}[Arapura]
The Hodge conjecture for the universal curve $\mathcal{C}$ over $X(\Gamma)$ (and thus for $\overline{\mathcal{C}}$) is true.
\end{theorem}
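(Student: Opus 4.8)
The plan is to feed the morphism $\pi\colon\mathcal{C}\to Y:=X(\Gamma)$ into the reduction of Proposition~\ref{p2}(ii): since $d_X=\dim\mathcal{C}=4$ and $d_Y=\dim Y=3$, it suffices to establish $\HC^p_i(\pi)$ for every $(p,i)$ in the parallelogram $Q$ cut out by $0\le i\le 3$ and $\tfrac{i}{2}\le p\le\tfrac{i+1}{2}$. Each of these intervals contains a single integer, so $Q=\{(0,0),(1,1),(1,2),(2,3)\}$, with $2p-i\in\{0,1\}$ throughout. Because $\pi$ has relative dimension one, the only coefficient systems that arise are $\mathcal{H}^0=\mathbb{Q}=V_{0,0}$ and $\mathcal{H}^1=\V_{1,0}$; crucially, $\mathcal{H}^2=\mathbb{Q}(-1)$ never enters $Q$, which is exactly why $\mathcal{C}$ avoids the difficulties of the abelian surface. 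By Proposition~\ref{p4}(ii), whenever the $(p,p)$-part of $H^i_{(2)}(Y^*,\mathcal{H}^{2p-i})$ vanishes, $\HC^p_i(\pi)$ holds automatically; otherwise one must exhibit the surviving classes as algebraic. These $L^2$-groups are computed through relative Lie algebra cohomology, and, by Proposition~\ref{p4}(i), the relevant conclusion is independent of the Harris--Zucker conjecture.

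First I would clear the two cases with coefficients in $\V_{1,0}$, namely $(1,1)$ and $(2,3)$. Here the finite-dimensional representation is $V_{1,0}$, which is \emph{not} of the form $V_{a,a}$; Proposition~\ref{p3.10} therefore forbids any Hodge class in $H^*(\mathfrak{sp}_4,U(2);U\otimes V_{1,0})$ for every cohomological $U$. Via the Hodge number formula of Proposition~\ref{p3.4} this yields $(H^1_{(2)}(Y^*,\V_{1,0}))^{1,1}=0$ and $(H^3_{(2)}(Y^*,\V_{1,0}))^{2,2}=0$, so $\HC^1_1(\pi)$ and $\HC^2_3(\pi)$ hold \emph{vacuously}. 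This is the precise meaning of the observation that the only cohomological representations feeding $H^3(\mathfrak{sp}_4,U(2);U\otimes\V_{1,0})$ are discrete series, whose Hodge types are never of type $(p,p)$.

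The remaining two cases carry the constant coefficient $\mathbb{Q}=V_{0,0}$. The case $(0,0)$ is trivial, $\Gamma(H^0(Y,\mathbb{Q}))$ being the line of the fundamental class. The case $(1,2)$ is the only one in which a genuine Hodge class survives: for $V_{0,0}$ the non-tempered representation $\sigma_3=\sigma_{0+3}$ contributes a class of type $(1,1)$ to $H^2$, so $(H^2_{(2)}(Y^*,\mathbb{Q}))^{1,1}\neq 0$. However, these are exactly the $a=0$ classes, i.e.\ ordinary $(1,1)$-classes in the degree-two cohomology of the base; by the Lefschetz $(1,1)$-theorem on a smooth projective compactification $\hat{Y}$ they are divisor classes. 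Pulling back such a divisor along $\pi$ and reading off its Leray graded piece in $H^2(Y,\mathbb{Q})$ exhibits the class as a codimension-one cycle on $\mathcal{C}$, so $\HC^1_2(\pi)$ holds. With all four cases of $Q$ verified, Proposition~\ref{p2}(ii) gives $\HC^p(\mathcal{C})$ for every $p$.

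Finally, to descend to a smooth compactification $\overline{\mathcal{C}}$ I would invoke Proposition~\ref{p1}(iii): writing $Z=\overline{\mathcal{C}}\setminus\mathcal{C}$, we have $\dim Z\le 3$, where the Hodge conjecture holds in every codimension (the cases $p\in\{0,1,2,3\}$ being covered respectively by the fundamental class, the Lefschetz $(1,1)$-theorem, its dual, and zero-cycles), so the boundary is a ``non-issue'' precisely because $\dim\mathcal{C}=4$. The main obstacle is therefore \emph{not} in the curve case itself, which is a bookkeeping exercise once the Hodge types are in hand; it has been front-loaded into Proposition~\ref{p3.10}, where the Vogan--Zuckerman classification together with the weight shift on $\V_{1,0}$ must be tracked correctly so that the only surviving classes reduce to divisors on the base. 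What makes $\mathcal{C}$ genuinely easier than the self-fiber products of Theorem~C is exactly that relative dimension one prevents $\V_{1,1}=V_{1,1}$---the first problematic $V_{a,a}$, whose algebraicity rests on the Saito--Kurokawa multiplicity count of Proposition~\ref{thB}---from ever occurring.
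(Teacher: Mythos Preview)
Your argument is correct and follows the same strategy as the paper's proof: both rest on Proposition~\ref{p3.10} to kill Hodge classes in the $\V_{1,0}$-coefficients and on Proposition~\ref{p1}(iii) to pass to the compactification. The paper is terser---it only states the key vanishing $H^3(Y,\V_{1,0})^{2,2}=0$ and leaves the trivial-coefficient cases implicit (having noted just above that the $a=0$ classes are pullbacks from the base)---whereas you spell out the full parallelogram $Q$ and make the Lefschetz $(1,1)$ step for $(p,i)=(1,2)$ explicit; but the underlying content is the same.
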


\begin{proof}
Let $\V=\V_{1,0}=R^1f_*\mathbb{C}$, then $H^3(U, \V_{1,0})^{2,2}=0$ since the intersection cohomology surjects onto the weight-4 part of the usual cohomology. (This is exactly the key vanishing theorem in Arapura's proof.)  Since $\overline{\mathcal{C}}\setminus \mathcal{C}$ is a 3-fold, we get the HC for $\overline{\mathcal{C}}$ by Prop.~\ref{p1}(iii).
\end{proof}

\subsubsection{Fiber products of the universal curve}
Let $\mathcal{C}^n$ be the $n$-fold fiber power of the universal family. All local systems $\V_{a,a}$ occur in the higher direct images of $\mathcal{C}^n$ for some $n$.  If $\Gamma=\Gamma^{\text{para}}(p)$, we need to find $\dim S_{2a+4}(SL(2,\Z))+\dim S_{2a+4}(\Gamma_0(p))^{new,+}$ if $a$ is even, and $\dim S_{2a+4}(\Gamma_0(p))^{new, -}$ if $a$ is odd.

\begin{theorem}\label{thC}
For $p=1,2,3,5$, the Hodge conjecture holds for $\mathcal{C}^2$ over $X(p)$.    
\end{theorem}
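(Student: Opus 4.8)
The plan is to feed the projection $\pi\colon\mathcal{C}^2\to X(p)$ into the decomposition-theorem package of Propositions~\ref{p2} and~\ref{p4}: by Proposition~\ref{p2}(ii) it suffices to establish $\HC^p_i(\pi)$ for every $(p,i)$ in the parallelogram $Q$ (here $d_X=5$, $d_Y=3$), and by Proposition~\ref{p4}(ii) each such statement follows once the $(p,p)$-part of $H^i_{(2)}(X(p)^*,\mathcal{H}^{2p-i})$ either vanishes or is spanned by cycle classes. So the first step is to decompose the local systems $\mathcal{H}^r=R^r\pi_*\Q$. The fiber of $\pi$ is $C\times C$, so Künneth gives $\mathcal{H}^r=\bigoplus_{a+b=r}(R^af_*\Q)\otimes(R^bf_*\Q)$ for the universal genus-$2$ curve $f\colon\mathcal{C}\to X(p)$, the only nontrivial factor being $R^1f_*\Q=\V_{1,0}$. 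The one interesting piece is the interior tensor square inside $\mathcal{H}^2$, namely $\V_{1,0}\otimes\V_{1,0}=\Sym^2\V_{1,0}\oplus\wedge^2\V_{1,0}=V_{2,0}\oplus V_{1,1}\oplus\Q(-1)$.

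By Proposition~\ref{p3.10} the only irreducible summands that can carry Hodge classes are those of the form $\V_{a,a}$; hence $V_{2,0}$ is discarded and I am left with the trivial system $\V_{0,0}$ (with Tate twists) and $\V_{1,1}$. For $\V_{1,1}$, Propositions~\ref{p3.4} and~\ref{p3.10} identify its Hodge $(2,2)$-classes with $H^2(\mathfrak{sp}_4,U(2);\sigma_4\otimes V_{1,1})$, so the dimension of the $(p,p)=(2,2)$-space at the node $(p,i)=(2,2)\in Q$ equals the multiplicity of $\sigma_4$ (this is $\sigma_{a+3}$ with $a=1$) in $\mathcal{A}^2(\Gamma^{\mathrm{para}}(p)\backslash Sp(4,\R))$. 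Proposition~\ref{thB}, applied with $a=1$ odd (equivalently $k=4$ even), evaluates this multiplicity as $\dim S_6(\Gamma_0(p))^{new,-}$. The arithmetic crux is then to check that this number vanishes exactly for $p=1,2,3,5$; I would read off the weight-$6$ newform dimensions and their Atkin--Lehner eigenvalues at $p$ from the classical modular-form tables. Granting the vanishing, the $\V_{1,1}$-isotypic contribution carries no Hodge classes, so $\HC^2_2(\pi)$ holds vacuously by Proposition~\ref{p4}(ii).

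For the trivial system and its twists $\Q(-j)\subset\mathcal{H}^{2j}$ ($j=0,1,2$) the Hodge classes are not vacuous but come from the base. Here a $(p,p)$-class forces the base Leray index to be $i=2(p-j)$, which with $0\le i\le d_Y=3$ leaves only $i=0$ and $i=2$; thus the base contributions live in $\IH^0(X(p)^*,\Q)$ and $\IH^2(X(p)^*,\Q)$, i.e.\ the fundamental class and divisor classes, both algebraic by Lefschetz $(1,1)$ on a smooth toroidal model. Pulling these back along $\pi$ and capping with the tautological algebraic fiber classes (fundamental classes of fibers, the class $\Q(-1)$ realized by the relative diagonal in $C\times C$) produces genuine cycles on $\mathcal{C}^2$ inducing the required classes, so $\HC^p_i(\pi)$ holds for every $\V_{0,0}$-node in $Q$. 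Combining the two analyses gives $\HC^p_i(\pi)$ for all $(p,i)\in Q$, whence $\HC^p(\mathcal{C}^2)$ for all $p$ by Proposition~\ref{p2}(ii).

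The hard part will be the arithmetic input $S_6(\Gamma_0(p))^{new,-}=0$: this is precisely what distinguishes $p=1,2,3,5$ from larger primes (for which $\sigma_4$ does occur and genuine non-tautological Hodge classes would have to be produced by cycles), so the theorem stands or falls on the correct computation of the Atkin--Lehner eigenspaces and on matching the ``$-$'' sign against the VIc branch of Schmidt's paramodular analysis underlying Proposition~\ref{thB}. A secondary point to nail down is that the $\V_{0,0}$ classes really are exhausted by pullbacks of divisors times tautological fiber classes --- equivalently, that in the range $Q$ no Saito--Kurokawa class on the base beyond the divisor-dual $\sigma_3$ intervenes --- which again reduces to the multiplicity bookkeeping already in hand.
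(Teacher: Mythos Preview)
Your proposal is correct and follows essentially the same route as the paper: decompose $\mathcal{H}^2$ into $V_{2,0}\oplus V_{1,1}\oplus\Q(-1)$, discard $V_{2,0}$ via Proposition~\ref{p3.10}, kill the $\V_{1,1}$ contribution by checking $\dim S_6(\Gamma_0(p))^{new,-}=0$ for $p=1,2,3,5$ through Proposition~\ref{thB}, and handle the trivial pieces by noting the base has dimension~$3$. The paper's write-up is terser (it does not spell out the parallelogram bookkeeping or the Lefschetz~$(1,1)$ step for the base classes), but the logical skeleton is identical.
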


\begin{proof}
The only interesting higher direct image $\mathcal{H}^2_{\mathcal{C}^2/X(p)}$ has three components: the trivial local system, the adjoint local system $\V_{2,0}$, and the local system $\V_{1,1}$. Now by Prop.~\ref{p3.10}, $\V_{1,1}$ is the only possible non-trivial local system in $\mathcal{C}^2$ that could have Hodge classes. (The trivial ones lead to Hodge classes pulled back from the base; since this has dimension 3, those Hodge classes are algebraic.) If $\Gamma=Sp(4,\Z)$, then there are no Hodge classes. 
   
For $p=2,3,5$, just note that $\dim S_6(\Gamma_0(p))^{new,-}=0$ for $p=2,3,5$. 
\end{proof}

\begin{remark}
By the same calculation, we get a real Hodge class for $\mathcal{C}^2$ over $X(7)$ since $\dim S_6(\Gamma_0(p))^{new,-}=1$. It would be an interesting problem to determine if this is a rational Hodge class, and, if so, find a cycle representing it.
\end{remark}


\subsubsection{The universal abelian surface}
This also has the feature that the only local system supporting Hodge classes is the copy of $\V_{1,1}$ in $\mathcal{H}^2_{\mathcal{A}/X(p)}$.

\begin{corollary}
The HC holds for any smooth compactification of the universal abelian surface $\mathcal{A}$ over $X(p)$, for $p=1,2,3,5$.
\end{corollary}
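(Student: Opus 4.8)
The plan is to run, for the family $\pi\colon\mathcal{A}\to X(p)$ with abelian-surface fibers, the same analysis that established Theorem \ref{thC} for $\mathcal{C}^2$. First I would list the Leray local systems. Since $\mathcal{H}^1_{\mathcal{A}/X(p)}=\V_{1,0}$ and the fibers are abelian surfaces, $\mathcal{H}^r_{\mathcal{A}/X(p)}=\wedge^r\V_{1,0}$; thus the only systems that occur are the trivial one $\V_{0,0}$ (in $\mathcal{H}^0$, $\mathcal{H}^2$ and $\mathcal{H}^4$, via the polarization and its powers), $\V_{1,0}$ (in $\mathcal{H}^1$ and, up to a Tate twist, $\mathcal{H}^3$), and $\V_{1,1}$ (in $\mathcal{H}^2$, from $\wedge^2\V_{1,0}=\V_{1,1}\oplus\V_{0,0}$). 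This is a sublist of the systems already occurring for $\mathcal{C}^2$, so no new representation theory is required.

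Next I would apply Proposition \ref{p3.10}. The system $\V_{1,0}$ is not of the form $\V_{a,a}$, hence carries no Hodge classes. The trivial system $\V_{0,0}$ contributes only classes pulled back from the base $X(p)$; as $\dim_{\C}X(p)=3$, the Hodge conjecture holds there by the Lefschetz $(1,1)$-theorem and its dual, and the resulting classes are cut out on $\mathcal{A}$ by the relative polarization together with pullbacks of base cycles. The one remaining possibility is a class of type $(2,2)$ in the copy of $\V_{1,1}\subset\mathcal{H}^2_{\mathcal{A}/X(p)}$, which by Proposition \ref{p3.10} can arise only from the non-tempered representation $\sigma_{4}$. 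Its multiplicity is given by Proposition \ref{thB} with $k=4$ (even), namely $\dim S_{6}(\Gamma_0(p))^{new,-}$, and this vanishes for $p=2,3,5$ (while for $p=1$, where $\Gamma=Sp(4,\Z)$, already $\dim S_6(SL(2,\Z))=0$). Hence there are no exotic Hodge classes, so $\HC^p_i(\pi)$ holds for every $(p,i)$---vacuously on the $\V_{1,0}$ and $\V_{1,1}$ summands by Proposition \ref{p4}(ii), and through the base on the $\V_{0,0}$ summand---whence $\HC^p(\mathcal{A})$ by Proposition \ref{p2}(i).

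The hard part will be passing to an arbitrary \emph{smooth compactification} $\hat{\mathcal{A}}$. Since $\dim_{\C}\mathcal{A}=5$, the boundary $\hat{Z}=\hat{\mathcal{A}}\setminus\mathcal{A}$ is a $4$-fold, so---unlike the curve $\mathcal{C}$, whose boundary is a $3$-fold handled by the Lefschetz theorems---one cannot feed $\HC^p(\hat{Z})$ into Proposition \ref{p1}(iii) for free. I would first take $\hat{\mathcal{A}}$ to be a resolution of a toroidal compactification, so that $\hat{Z}$ is a normal-crossing divisor whose strata are explicit fibrations over the boundary strata of a toroidal compactification of $X(p)$, with fibers assembled from tori (completed to toric varieties) and lower-dimensional (semi-)abelian pieces; for such strata $\HC$ follows from the K\"unneth formula and the known cases for tori, curves and surfaces, giving $\HC^p(\hat{Z})$. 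Then Proposition \ref{p1}(iii) upgrades $\HC^p(\mathcal{A})$ to $\HC^p$ of this toroidal model, and independence of the choice of smooth compactification follows from weak factorization, the blow-up centres all lying in the boundary where $\HC$ is already controlled. (An alternative is to compare $\mathcal{A}$ with $\mathcal{C}^2$ directly: fiberwise the universal Jacobian is birational to $\Sym^2(\mathcal{C}/X(p))$, a $\Z/2$-quotient of $\mathcal{C}^2$, so Theorem \ref{thC} should transfer to $\mathcal{A}$ through the resulting correspondence.) In either route the genuine obstacle is the boundary $4$-fold; the interior statement is immediate from the computation above.
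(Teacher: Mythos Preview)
Your interior argument is exactly the paper's: the sentence preceding the Corollary notes that ``the only local system supporting Hodge classes is the copy of $\V_{1,1}$ in $\mathcal{H}^2_{\mathcal{A}/X(p)}$,'' which is precisely your Leray analysis, and the vanishing of the $\sigma_4$-multiplicity for $p=1,2,3,5$ is just the computation carried out in the proof of Theorem~\ref{thC}.  So on the open $\mathcal{A}$ there is nothing to distinguish your argument from the paper's.

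Where you diverge is that you take the phrase ``any smooth compactification'' seriously and try to justify it, whereas the paper offers no argument for this step at all --- the Corollary is simply asserted.  You are right that $\dim_{\C}\mathcal{A}=5$, so the boundary is a $4$-fold and Proposition~\ref{p1}(iii) does not apply for free the way it did for $\overline{\mathcal{C}}$.  Your proposed route through a toroidal model (with the Faltings--Chai description of the boundary strata as toric/semi-abelian fibrations over lower-genus loci, where HC is elementary) is the natural one and is presumably what the authors have in mind, given that \cite{FC} sits in the bibliography; your alternative route via the correspondence $\mathcal{C}^2\dashrightarrow\mathcal{A}$ through $\Sym^2$ is also viable.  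Either way you are supplying an argument the paper omits rather than deviating from one it gives.
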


\subsection{Families over 2-dimensional ball quotients}

\subsubsection{Local systems and their cohomology}

Let $\mathbb{B}^2=SU(2,1)/U(2)$ be the two-dimensional ball, the Hermitian symmetric domain associated with the group $SU(2,1)$. Let $X=\Gamma\backslash \mathbb{B}^2$ be an arithmetic quotient, and $\pi:\mathcal{C}\to X$ be the universal family of Picard curves.\footnote{The maximal choice of $\Gamma$ would be of the form $SU(2,1;\mathcal{O}_K)$, with $K=\Q(\omega)$, $\omega=e^{2\pi i/3}$. The curves here are of the form $y^3=P(x)$, with $P$ of degree $4$, see for example \cite{GK}.} Denoting the representation $V^{a\lambda_1+b\lambda_2}$ by $V_{a,b}$, the local system $R^1\pi_*\C$ is equal to $\V_{1,0}\oplus \V_{0,1}$. All other local systems are sub-local systems of the higher direct images of the fiber product of the universal family.

The only real Hodge classes are in $H^2(\mathfrak{g},K; U\otimes V)$ when $V=V_{a,a}$ and $U=U_{nh}$ is the non-holomorphic discrete series. 



\subsubsection{Picard curves}

We do not need to consider the universal Picard curves over 2-ball quotients since this is three-dimensional and the Hodge conjecture holds automatically.  On the other hand, to consider the fiber products $\mathcal{C}^k$, we would need to compute the multiplicity of the non-holomorphic discrete series.  We have not done this here, but (esp.~for $k=2$) it might produce nontrivial cases of the HC that can be solved concretely.

\subsubsection{K3-surfaces}
For $0\leq k\leq 6$, there exist $K3$ surfaces $S$ with an automorphism $\alpha_S$ of order three such that:
$$H^2(S,\Q)=T_S\oplus N_S, \;\;\;N_S:=H^2(S,\Q)^{\alpha_S}\cong \Q^{8+2k}, \;\;\;H^{2,0}\subset T_S\otimes \C.$$
The action of $\alpha_S^*$ defines a structure of $\Q(\omega)$-vector space on $T_S$. The moduli space of such $K3$ surfaces is a quotient of the $q$-ball (where $q=6-k$). Let $k=4$; then we have:

\begin{theorem}\label{tK3}
The Hodge conjecture holds for the family of $K3$ surfaces over $\Gamma \backslash \mathbb{B}^2$.
\end{theorem}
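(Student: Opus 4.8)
The plan is to run the $SU(2,1)$-computation through the decomposition-theorem reduction of Proposition~\ref{p4}, isolate the single surviving family of transcendental Hodge classes, and dispose of it via an algebraic correspondence with a fixed CM elliptic curve. Write $\pi\colon X\to Y=\Gamma\backslash\mathbb{B}^2$ for the universal family, so $\dim_\C Y=2$ and $\dim_\C X=4$, and split $\mathcal{H}^2=R^2\pi_*\Q=N_S\oplus\mathbb{V}_T$ into the rank-$16$ constant N\'eron-Severi summand and the rank-$6$ transcendental local system. Running through the parallelogram $Q$ of Proposition~\ref{p2}(ii), and using that the $K3$ fibers kill the odd $R^j\pi_*\Q$, the only $(p,i)$ for which $\HC^p_i(\pi)$ is not immediate is $(p,i)=(2,2)$; there $H^2(Y,N_S)^{2,2}$ is algebraic from the Lefschetz $(1,1)$-theorem on the surface $Y$ crossed with the fiberwise N\'eron-Severi cycles, so everything reduces to $H^2(Y,\mathbb{V}_T)^{2,2}$. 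By Proposition~\ref{p3.4} this space is cut out by the non-holomorphic discrete series $U_{nh}$, of dimension $\mathrm{mult}(\Gamma,U_{nh})$; these are tempered cuspidal classes, hence neither killed vacuously nor produced by the usual theta/special-cycle recipe, and must be exhibited by hand.

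The key geometric input is that the half-twist $\{-\tfrac{1}{2}\}$ of Example~\ref{ex2.5} is realized motivically by the CM elliptic curve $E=\C/\Z[\omega]$. Letting $\alpha$ denote the diagonal order-three automorphism, a Hodge-number check ($p_g=1$, $q=0$, $\rho=22-6=16$) identifies the minimal resolution of $(C\times E)/\langle\alpha\rangle$ with one of our $K3$ surfaces, and the quotient correspondence yields an isomorphism of $\Q$-Hodge structures
$$\mathbb{V}_T\;\cong\;\bigl(\mathbb{V}_C\otimes H^1(E)\bigr)^{\alpha},\qquad \mathbb{V}_C:=R^1\pi_{\mathcal{C},*}\Q,$$
induced by an algebraic cycle. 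This matches $X$, up to algebraic correspondence over $Y$, with $(\mathcal{C}\times E)/\langle\alpha\rangle$ and carries the classes of $H^2(Y,\mathbb{V}_T)^{2,2}$ into the $(2,2)$-classes of $H^4(\mathcal{C}\times E)$. Splitting by K\"unneth, the summands $H^4(\mathcal{C})\otimes H^0(E)$ and $H^2(\mathcal{C})\otimes H^2(E)$ contribute $(2,2)$-classes that reduce to divisor classes and are algebraic by the Lefschetz theorems on the threefold $\mathcal{C}$, so the whole problem collapses to the $\alpha$-invariant part of the cross term $H^3(\mathcal{C})\otimes H^1(E)$.

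Here the elliptic curve does the work. Dualizing $H^1(E)\cong H^1(E)^\vee(-1)$, a $(2,2)$-class in $H^3(\mathcal{C})\otimes H^1(E)$ is a morphism of weight-one Hodge structures $H^1(E)\to H^3(\mathcal{C})(1)$; since $H^1(E)$ has only types $(1,0),(0,1)$, its image lands in the weight-one (abelian-type) summand $W\subset H^3(\mathcal{C})(1)$, namely the Tate twist of the $(2,1)+(1,2)$-part. Writing $W=H^1(B)$ for the corresponding abelian variety, the class becomes a morphism $H^1(E)\to H^1(B)$, that is a homomorphism $B\to E$; such homomorphisms are algebraic, and \emph{every} Hodge class in the cross term arises this way, so surjectivity of the cycle map---not merely injectivity---is automatic. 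Transporting the resulting cycle back through the quotient correspondence produces an algebraic cycle on $X$ representing the original transcendental class. Together with the immediate cases above this gives $\HC^p_i(\pi)$ for all $(p,i)\in Q$, whence Proposition~\ref{p2}(ii) yields the theorem.

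The hard part will be making the passage to $W=H^1(B)$ genuinely algebraic, i.e.\ producing a correspondence (a Chow-K\"unneth type projector) that cuts $W$ out of $H^3(\mathcal{C})$: a priori $H^3(\mathcal{C})$ also carries a $(3,0)$-part, the holomorphic discrete-series contribution to $H^2(Y,\mathbb{V}_C)$, and over $\Q$ the splitting $H^{2,1}\oplus H^{1,2}$ is not automatically motivic. This is exactly where the order-three automorphism is indispensable: the algebraic $\Q(\omega)$-action separates the Hodge types into eigen-summands, and combined with the Deninger-Murre decomposition of the Jacobian scheme $\mathcal{J}=\mathrm{Jac}(\mathcal{C}/Y)$ --- pulled back along the Abel-Jacobi embedding $\mathcal{C}\hookrightarrow\mathcal{J}$ furnished by the section at infinity of the Picard curves --- it supplies the projector. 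Verifying this compatibly with the $\mu_3$- and $\Q(\omega)$-structures is the technical heart of the argument; everything upstream of it is Hodge-number bookkeeping.
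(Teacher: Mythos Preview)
Your approach is entirely different from the paper's.  The paper disposes of Theorem~\ref{tK3} in two lines by asserting that the half-twisted standard representation contributes \emph{no} Hodge classes to $H^2(Y,\mathcal{H}^2)$, so that (since K3 fibers have no odd cohomology) every case in the parallelogram $Q$ is vacuous and Proposition~\ref{p4}(iii) finishes.  You instead take the position that there \emph{are} $(2,2)$-classes in $H^2(Y,\mathbb{V}_T)$ and set out to realize them by cycles via a Borcea--Voisin-type correspondence with $\mathcal{C}\times E$.  In fact the paper's own Proposition at the end of \S3.2.1 agrees with you: it records a $(2,2)$-type in $H^2(\mathfrak{g},K;U\otimes V^{\lambda_1})$ for the K3 twist (one of the three discrete series for $V^{\lambda_1}$ contributes it after the $(1,0)$-shift).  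So you have put your finger on an apparent internal inconsistency in the paper, and your instinct that something must actually be constructed is reasonable.  (A minor quibble: your identification of the contributing $U$ as the non-holomorphic discrete series $U_{nh}$ is lifted from the $V_{a,a}$ discussion in \S5.2.1 and does not apply verbatim to $V^{\lambda_1}$.)

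That said, your argument has a genuine gap at exactly the step you label ``the hard part,'' and the tools you invoke do not close it.  You need an algebraic projector cutting the level-one piece $W$ out of $H^3(\mathcal{C})$, so that a Hodge morphism $H^1(E)\to H^3(\mathcal{C})(1)$ can be promoted to a homomorphism of abelian varieties.  The $\Q(\omega)$-action only separates the $V_+$- and $V_-$-eigenpieces of $H^2(Y,\mathcal{H}^1_C)\subset H^3(\mathcal{C})$; within each eigenpiece all three discrete series still contribute, with Hodge types $(3,0),(2,1),(1,2)$ (resp.\ their conjugates), so the order-three symmetry does \emph{not} split off $H^{2,1}\oplus H^{1,2}$ from $H^{3,0}\oplus H^{0,3}$.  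Likewise the relative Deninger--Murre projector $\pi_1$ on $\mathcal{J}=\mathrm{Jac}(\mathcal{C}/Y)$ cuts out $h^1(\mathcal{J}/Y)$, whose degree-$3$ realization is again \emph{all} of $H^2(Y,\mathcal{H}^1_C)$, not its level-one part; transporting along Abel--Jacobi does not change this.  What you are really asking for is an absolute projector on the threefold $\mathcal{C}$ separating $H^3$ by Hodge level---an instance of the generalized Hodge conjecture---and neither $\mu_3$ nor relative Chow--K\"unneth supplies it.  Without that projector, the step ``Hodge morphism $\Rightarrow$ homomorphism $B\to E$'' remains purely Hodge-theoretic and produces no cycle on $\mathcal{C}\times E$.
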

\begin{proof}
We do not get Hodge classes when $V$ is the (half-twisted) standard representation, so $H^2(X,\mathcal{H}^2)$ has no Hodge classes. But the K3 surfaces have no $H^1$ or $H^3$. 
\end{proof}

\noindent Since the universal $K3$ surface is a fourfold, the HC holds for any smooth compactification as well.


\subsubsection{Rohde's Calabi-Yau threefolds}

J.C.~Rohde \cite{Roh} constructed families of Calabi-Yau threefolds with $q=h^{2,1}=6-k$, for $0\leq k\leq 6$, which are parameterized by a $q$-dimensional ball quotients. The construction is as follows. Let $\omega\in\C$ be a primitive cube root of unity and consider the elliptic curve
$$E=\C/\Z+\omega\Z.$$
Then $E$ has a natural automorphism of order three $\alpha_E$ defined by $z\mapsto \omega z$. The automorphism $\alpha_E$ gives a decomposition of $H^1(E,\C)$ into eigenspaces with eigenvalues $\omega$ and $\bar{\omega}$:
$$H^1(E,\C)=H^{1,0}(E)_{\omega}\oplus H^{0,1}(E)_{\bar{\omega}}.$$

Now we take $S$ to be the $K3$ surface in the last subsection (so $k=4$, $q=2$). The weight three polarized rational Hodge structure of $\alpha=\alpha_S\otimes \alpha_E$-invariants in the tensor product $H^2(S,\Q)\otimes H^1(E,\Q)$ is then of CY-type. Rohde shows that it is isomorphic to the third cohomology group of a CY threefold $X_S$ which is a desingularization of the singular quotient variety $(S\times E)/\alpha$. Clearly, the moduli space of such $X_S$ is the same as the moduli space of $S$, a quotient of the $2$-ball.

The CY-threefold $X_S$ still has an automorphism $\alpha_{X_S}$ of order three which is induced by $\alpha_S\times \alpha_E$. We have
$$H^3(X_S,\C)\cong (T_{S,\bar{\omega}}\otimes H^{1,0}(E)_{\omega})\oplus (T_{S,\omega}\otimes H^{0,1}(E)_{\bar{\omega}}),$$
and 
$$H^{2,1}(X_S)\cong T^{2,0}_{S,\bar{\omega}}\otimes H^{1,0}(E)_{\xi}, \;\;\;\;\;\dim H^{2,1}(X_S)=q.$$
Consider the universal family $\mathcal{X}$ over the ball quotient $X=\Gamma\backslash \mathbb{B}_2$. The $H^1$ of CY-threefolds vanishes automatically. The middle cohomology $H^3$ has Hodge numbers $(1,2,2,1)$. As Hodge structures, they are just the double half-twists we considered in this paper. From the previous calculations, we know $H^k(X,\mathbb{V}^3)$ has no Hodge classes. Since the fiberwise $\mathcal{H}^{2j}$'s are copies of the trivial local system, any Hodge classes ``come from the base'' hence are algebraic.

\begin{proposition}
The HC holds for the (open) total spaces of these families of CY 3-folds.
\end{proposition}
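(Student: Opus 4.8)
The plan is to invoke Proposition~\ref{p2}(ii), which reduces $\HC^p(\mathcal{X})$ (for all $p$) on the total space $\mathcal{X}$ of the family to the graded statements $\HC^p_i(\pi)$ for all pairs $(p,i)$ in the parallelogram $Q$. Here the base is the surface $X=\Gamma\backslash\mathbb{B}^2$, so $d_Y=2$, while the fibres are Calabi-Yau threefolds, so $d_{\mathcal{X}}-d_Y=3$; thus $Q$ consists of the $(p,i)$ with $0\le i\le 2$ and $\tfrac{i}{2}\le p\le \tfrac{i+3}{2}$. First I would tabulate, for each such $(p,i)$, the higher direct image $\mathcal{H}^{2p-i}=R^{2p-i}\pi_*\mathbb{Q}$, organizing the analysis by the parity of $2p-i$; a short check shows that on $Q$ the value $2p-i=3$ occurs only at $(p,i)=(2,1)$, the value $2p-i=1$ only at $(1,1)$, and all other pairs give $2p-i\in\{0,2\}$.

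The key structural input is the shape of these local systems. Since the fibres are Calabi-Yau, $\mathcal{H}^1=\mathcal{H}^5=0$, and the only nontrivial odd one is $\mathcal{H}^3=\mathbb{V}^3$, the weight-three VHS of Hodge type $(1,2,2,1)$ obtained as the double half-twist of Example~\ref{ex2.5}. I would then argue that the even local systems $\mathcal{H}^0,\mathcal{H}^2,\mathcal{H}^4,\mathcal{H}^6$ are constant of Tate type, i.e. $\mathcal{H}^{2j}\cong\mathbb{Q}(-j)^{\oplus m_j}$: because $X_S$ is a resolution of $(S\times E)/\alpha$ with $S$ a K3 surface and $E$ the fixed elliptic curve, the even fibrewise cohomology is assembled from the $\alpha$-invariant part of $H^\bullet(S\times E)$ together with the exceptional classes of the resolution, all of which are algebraic and monodromy-invariant (the variation of $S$ lives entirely in the transcendental part $T_S$, whereas $N_S$ and the exceptional divisors do not vary).

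With this in place I would verify $\HC^p_i(\pi)$ case by case. When $2p-i=3$, the target $\Gamma(H^i(X,\mathcal{H}^3(p)))$ is the space of Hodge $(p,p)$-classes in the cohomology of $\mathbb{V}^3$, which vanishes by the $SU(2,1)$ computation of the preceding section (no Hodge classes arise for the Calabi-Yau local system), so $\HC^p_i(\pi)$ holds vacuously; when $2p-i=1$, $\mathcal{H}^1=0$ and it is again vacuous. When $2p-i=2j$ is even, $\mathcal{H}^{2j}\cong\mathbb{Q}(-j)^{\oplus m_j}$, so $\Gamma(H^i(X,\mathcal{H}^{2j}(p)))$ is a sum of copies of the Hodge $(p-j,p-j)$-classes of the base $X$; since $\dim_{\mathbb{C}}X=2$ the Hodge conjecture holds on $X$ by the Lefschetz $(1,1)$-theorem, so these base classes are algebraic, and intersecting the preimages of the representing cycles on $X$ with the (locally constant, algebraic) fibrewise cycles produces cycles on $\mathcal{X}$ in codimension $(p-j)+j=p$. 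This establishes $\HC^p_i(\pi)$ for every $(p,i)\in Q$, and Proposition~\ref{p2}(ii) then yields $\HC^p(\mathcal{X})$ for all $p$, which is exactly the Hodge conjecture for the open total space.

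The hard part will be the structural claim that each even $\mathcal{H}^{2j}$ is a constant local system of Tate type, i.e. that the even fibrewise cohomology is spanned by algebraic classes that do not vary in the family. Making this precise requires careful bookkeeping of the $\alpha$-invariant cohomology of $S\times E$ and of the exceptional locus of the chosen resolution, and checking that the resolution can be carried out uniformly over the ball quotient so that the exceptional classes genuinely span trivial subsystems. Once that is granted, the remaining steps are formal consequences of the vanishing of Hodge classes in $H^\bullet(X,\mathbb{V}^3)$ together with the triviality of the Hodge conjecture in complex dimension two.
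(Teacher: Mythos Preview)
Your proposal is correct and follows essentially the same approach as the paper: vanishing of $\mathcal{H}^1$, the absence of Hodge classes in $H^\bullet(X,\mathbb{V}^3)$ from the $SU(2,1)$ computation, and the triviality (Tate type) of the even $\mathcal{H}^{2j}$ reducing the remaining cases to the Hodge conjecture on the two-dimensional base. The paper asserts the last point without elaboration (``the fiberwise $\mathcal{H}^{2j}$'s are copies of the trivial local system''), whereas you correctly identify it as the step requiring actual verification and sketch how to carry it out; otherwise your argument is simply a more explicit unwinding of the paper's via Proposition~\ref{p2}(ii).
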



\subsection{Curves over three-dimensional ball quotients}


We just consider the universal curve $\mathcal{C}$. The local system is associated with the standard representation.  By Prop.~\ref{p3.9}, there are no Hodge classes in $H^*(X,\tilde{\mathbb{V}}_{\R}^{\lambda_1})\otimes \C=H^*(X,\V^{\lambda_1}\oplus \V^{\lambda_3})$, so all Hodge classes come from the base.  Since the base has dimension three, these classes are known to be algebraic.  This proves

\begin{theorem}\label{thA}
Let $\mathcal{C}$ be the universal curve over an arithmetic quotient of $\mathbb{B}_3$. Then the Hodge conjecture holds for $\mathcal{C}$, hence for any compactification $\overline{\mathcal{C}}$.
\end{theorem}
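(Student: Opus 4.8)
The plan is to realize $\mathcal{C}$ as the total space of the fibration $\pi\colon\mathcal{C}\to X$ over the three-ball quotient $X=\Gamma\backslash\mathbb{B}^3$ and to deduce $\HC^p(\mathcal{C})$ for all $p$ from Proposition \ref{p2}(ii), which reduces the Hodge conjecture for the total space to the surjectivity statements $\HC^p_i(\pi)$ for $(p,i)$ ranging over the parallelogram $Q$. Here the fibers are Picard curves of genus four, so $\dim\mathcal{C}=4$ while $\dim X=3$; thus $d_X-d_Y=1$ and $Q=\{(p,i):0\le i\le 3,\ \tfrac{i}{2}\le p\le\tfrac{i+1}{2}\}$ consists of exactly the four lattice points $(0,0),(1,1),(1,2),(2,3)$. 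For these the Leray exponent $2p-i$ takes only the values $0$ and $1$, so only the local systems $\mathcal{H}^0=R^0\pi_*\Q$ and $\mathcal{H}^1=R^1\pi_*\Q$ enter $Q$, and $\mathcal{H}^2$ never does.

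First I would identify these local systems. Since a genus-four Picard curve has $H^1$ of rank $8$ carrying the real standard representation of $U(3,1)$, the local system $\mathcal{H}^1$ is $\tilde{\V}^{\lambda_1}_{\R}$, which is of complex type with complexification $\V^{\lambda_1}\oplus\V^{\lambda_3}$; the local systems $\mathcal{H}^0=\Q$ and $\mathcal{H}^2=\Q(-1)$ are trivial. Consequently the points $(1,1)$ and $(2,3)$ of $Q$ involve the standard-representation local system, while $(0,0)$ and $(1,2)$ involve only the trivial one. For the former I would apply Proposition \ref{p3.9}: neither $\lambda_1$ nor its dual $\lambda_3$ is of the form $n\lambda_2$, so $H^*(\fg,K;U\otimes\V^{\lambda_1})$ and $H^*(\fg,K;U\otimes\V^{\lambda_3})$ contain no Hodge classes for any cohomological $U$. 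Feeding this through the complex-type Hodge-number formula following Proposition \ref{p3.4} yields $(H^i_{(2)}(X^*,\mathcal{H}^1))^{p,p}=0$ at $(p,i)=(1,1)$ and $(2,3)$, whence Proposition \ref{p4}(ii) makes $\HC^p_i(\pi)$ hold \emph{vacuously} at these points.

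For the trivial-local-system points the Hodge classes do not vanish, but a Hodge class in $H^i(X,\mathcal{H}^0(p))$ is exactly a codimension-$p$ Hodge class on the base: at $(0,0)$ the fundamental class and at $(1,2)$ a divisor class, both algebraic (the latter by the Lefschetz $(1,1)$ theorem) and each realized on $\mathcal{C}$ as the pullback $\pi^*$ of a cycle from $X$. Thus $\HC^p_i(\pi)$ holds at these points as well, and Proposition \ref{p2}(ii) delivers $\HC^p(\mathcal{C})$ for every $p$. To pass to a compactification $\overline{\mathcal{C}}$ I would invoke Proposition \ref{p1}(iii): the boundary $Z=\overline{\mathcal{C}}\setminus\mathcal{C}$ has dimension at most three, so $\HC^p(Z)$ holds because the Hodge conjecture is known in dimensions $\le 3$, and combining this with $\HC^p(\mathcal{C})$ gives the conjecture for $\overline{\mathcal{C}}$.

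The genuinely substantive input is the Vogan--Zuckerman vanishing recorded in Proposition \ref{p3.9}; granting it, the argument is the bookkeeping above. The point that most needs care is the assertion that \emph{all} non-base Hodge classes are excluded: this works because for the single universal curve the only nontrivial summand of the higher direct images is the standard representation, whereas a local system of type $\V^{n\lambda_2}$ with $n>0$---the one source of genuine Hodge classes permitted by Proposition \ref{p3.9}---would appear only after passing to fiber powers of $\mathcal{C}$, and hence plays no role here.
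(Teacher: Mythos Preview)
Your proof is correct and follows essentially the same approach as the paper: invoke Proposition~\ref{p3.9} to kill all Hodge classes in $H^*(X,\V^{\lambda_1}\oplus\V^{\lambda_3})$, observe that the remaining Hodge classes come from the three-dimensional base and are therefore algebraic, and use Proposition~\ref{p1}(iii) with $\dim Z\le 3$ for the compactification. The paper's argument is a one-paragraph sketch of exactly this, whereas you have made the parallelogram $Q$ and the use of Propositions~\ref{p2}(ii) and~\ref{p4}(ii) explicit.
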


\end{document}